\date{October 19, 2015}
\newif\ifdraft
\date{DRAFT: \today}
\newcommand{\myrunningheads}
{\ifdraft
\pagestyle{myheadings}
\markboth
{Filename: \currfilename \quad DRAFT \mmddyyyydate\today\quad \currenttime }
{Filename: \currfilename \quad DRAFT \mmddyyyydate\today\quad \currenttime }
\else
\pagestyle{myheadings}
\markboth
{Coagulation-fragmentation model for animal group-size statistics}
{P. Degond, J.-G. Liu and R. L. Pego}
\fi
}
\newcommand{\nwc}{\newcommand}
\newcommand{\hide}[1]{} 
\nwc{\PhiNew}{\Phi_{\star}}
\nwc{\iav}{{i_{\rm av}}}
\nwc{\xav}{{x_{\rm av}}}
\nwc{\feq}{{f_{\rm eq}}}
\nwc{\Feq}{{F_{\rm eq}}}
\nwc{\fheq}{{f^h_{\rm eq}}}
\nwc{\fheqi}{{f^h_{\rm eq\,\it i}}}
\nwc{\Fheq}{{F^h_{\rm eq}}}
\nwc{\finit}{f_{\scriptsize\rm in}}
\nwc{\Finit}{F_{\scriptsize\rm in}}
\nwc{\mui}{{\mu\:\! i}}  
\renewcommand{\epsilon}{\varepsilon}
\nwc{\eps}{\epsilon}
\nwc{\ip}[1]{\langle #1 \rangle}
\nwc{\qref}[1]{(\ref{#1})}
\nwc{\rplus}{{{\mathbb R}_+}}
\nwc{\D}{\partial}
\nwc{\inv}{^{-1}}
\nwc{\ubar}{\underline}
\nwc{\uu}{U}
\newcommand{\supn}{^{(n)}}
\nwc{\supk}{^{(k)}}
\nwc{\supj}{^{(j)}}
\nwc{\wkto}{\xrightarrow{w}}
\nwc{\vto}{\xrightarrow{v}}
\nwc{\nto}{\xrightarrow{n}}
\nwc{\np}{{n+1}}
\nwc{\one}{\mathbbm{1}}
\newcommand{\R}{\mathbb{R}}
\newcommand{\C}{\mathbb{C}}
\newcommand{\N}{\mathbb{N}^*}
\newcommand{\calM}{\mathcal{M}}
\newcommand{\calL}{\mathcal{L}}
\def\theequation{\thesection.\arabic{equation}}
\def\paragraph#1{{\bf #1\ }}
\newtheorem{lemma}{Lemma}[section]  
\newtheorem{theorem}[lemma]{Theorem}
\newtheorem{definition}[lemma]{Definition}
\newtheorem{proposition}[lemma]{Proposition}
\theoremstyle{definition}  
\newtheorem{remark}{Remark}[section]
\numberwithin{equation}{section}
\newcommand{\eqlab}[1]{\leavevmode\hfill\refstepcounter{equation}\label{#1}\textup{\tagform@{\theequation}}}
\newcommand{\warning}[1]{\typeout{}\typeout{WARNING: #1 at line \the\inputlineno}\typeout{}}
\newenvironment{todo}[1][TODO]{%
    \ifdraft\else\warning{TODO still present in final version}\fi
    \MakeFramed{\advance\hsize-\width \FrameRestore}\textbf{#1. }}%
    {\endMakeFramed}
    {\end{todo}}
\DeclareMathOperator{\im}{{\rm Im}}
\def\Box{\leavevmode\vbox{\hrule
     \hbox{\vrule\kern4pt\vbox{\kern4pt}%
           \vrule}\hrule}}
\def\blackbox{\leavevmode\vrule height 5pt width 4pt depth 0pt\relax}
\def\endproof{\null\hfill {$\blackbox$}\bigskip}
\newcounter{appendix}
\def\appendix{\advance\c@appendix by 1
   \def\thesection{\Alph{section}}
   \ifnum\c@appendix=1 \setcounter{section}{-1} \fi
   \@startsection {section}{1}{\z@}{-3.5ex plus -1ex minus 
   -.2ex}{2.3ex plus .2ex}{\Large\bf}}
\def\@part[#1]#2{%
  \ifnum \c@secnumdepth >-2\relax
    \refstepcounter{part}%
    \addcontentsline{toc}{part}{\thepart\hspace{1em}#1}%
  \else
    \addcontentsline{toc}{part}{#1}%
  \fi
  \markboth{}{}%
  {\centering
   \interlinepenalty \@M
   \normalfont
   \ifnum \c@secnumdepth >-2\relax
     \LARGE\bfseries \thepart\ \ %
   \fi
   #2\par}
  }
\date{} 
\begin{document}

\title
{Coagulation-fragmentation model  for \\ animal group-size statistics} 

\author{Pierre Degond $^{(1)}$, Jian-Guo Liu$^{(2)}$, Robert L. Pego$^{(3)}$}  

\maketitle

\vspace{-0.2 cm}

\begin{center}
1-Department of Mathematics\\
Imperial College London,
London SW7 2AZ, UK\\
email:pdegond@imperial.ac.uk
\end{center}

\begin{center}
2-Department of Physics and Department of Mathematics\\
Duke University,
Durham, NC 27708, USA\\
email: jliu@phy.duke.edu
\end{center}

\begin{center}
3-Department of Mathematics
and Center for Nonlinear Analysis\\
Carnegie Mellon University,
Pittsburgh, Pennsylvania, PA 12513, USA\\
email: rpego@cmu.edu
\end{center}

\vspace{-0.2 cm}
\begin{abstract}
We study coagulation-fragmentation equations inspired by a simple model
proposed in fisheries science to explain data for the size distribution of
schools of pelagic fish.  Although the equations lack detailed balance
and admit no $H$-theorem, we are able to develop a rather complete
description of equilibrium profiles and large-time behavior, 
based on recent developments in complex function theory for Bernstein and Pick functions. 
In the large-population continuum limit, 
a scaling-invariant regime is reached in which all equilibria are determined by a
single scaling profile. This universal profile exhibits power-law behavior crossing over from
exponent $-\frac23$ for small size to $-\frac32$ for large size, with an exponential cut-off. 
\end{abstract}


\medskip
\noindent
{\bf Key words: } Detailed balance, fish schools, Bernstein functions, complete monotonicity, Fuss-Catalan sequences, convergence to equilibrium.

\medskip
\noindent
{\bf AMS Subject classification: } 45J05, 70F45, 92D50, 37L15, 44A10, 35Q99.
\vskip 0.4cm

\pagebreak
    
\setlength{\cftbeforesecskip}{4pt}
\setlength{\cftbeforepartskip}{9pt}
\renewcommand{\cftsecfont}{\normalfont}
\tableofcontents



\vfil\pagebreak



\setcounter{equation}{0}
\section{Introduction}
\label{intro}

A variety of methods have been used to account for the observed 
statistics of animal group size in population ecology. 
The present work focuses on coagulation-fragmentation equations
that are motivated by studies of 
Niwa \cite{Niwa-CMA1996,Niwa-JTB1998,Niwa-JTB2003,Niwa-JTB2004}
and subsequent work by Ma et al.~\cite{Ma_etal_JTB11},
to explain observations in fisheries science
that concern the size distribution of schools of pelagic fish,
which roam in the mid-ocean.

In \cite{Niwa-JTB2003} and \cite{Niwa-JTB2004}, Niwa reached the striking 
conclusion that a large amount of observational data 
indicates that the fish school-size distribution $(f_i)_{i\ge1}$
is well described by a scaling relation of the form
\begin{equation}
    f_i \ \propto \ \frac1\iav \,\Phi\left(\frac i{\iav}\right) \ ,
\qquad \iav = \frac{\sum_i i^2 f_i} {\sum_i i f_i}\ .
\end{equation}
The scaling factor $\iav$
is the expected group size averaged over individuals,
and the universal profile $\Phi$ is highly non-Gaussian---instead it is a power law 
with an exponential cutoff at large size. Specifically Niwa proposed that
\begin{equation}
    \Phi(x) = x\inv \exp\left( -x+ \frac12 x e^{-x}\right).
\label{Phi-Niwas}
\end{equation}
Note that there are no fitting parameters.

Niwa discussed how this description might be justified for pelagic fish
in a couple of different ways.  Of particular interest for us here is the fact that
he performed kinetic Monte-Carlo simulations of a 
coagulation-fragmentation or merging-splitting process
with the following features:
\begin{itemize}
\item The ocean is modeled as a discrete set of sites that fish schools may occupy.
\item Schools jump to a randomly chosen site at discrete time steps.
 \item Two schools arriving at the same site merge.
 \item Any school may split in two with fixed probability per time step, 
\begin{itemize}
\item independent of the school size $i$,
\item with uniform likelihood among the $i-1$ splitting outcomes
\[
(1,i-1), (2,i-2),\ldots,(i-1,1)\ .
\]
\end{itemize}
\end{itemize}

Niwa's model is simple and compelling. 
It corresponds to mean-field coagulation-fragmentation
equations with a constant rate kernel for coagulation and constant overall fragmentation rate; 
see equations \qref{eq:CF3_disc}-\qref{eq:CF5_disc} and \qref{eq:rates_niwa_disc} 
in section 2 below. Such equations were explicitly written and 
studied in a time-discrete form by Ma et al.\ in \cite{Ma_etal_JTB11}.
Yet the existing mathematical
theory of coagulation-fragmentation equations reveals little about
the nature of their equilibria and the dynamical behavior of solutions.  

The reason for this dearth of theory is that the
 existing results that concern equilibria and long-time behavior 
almost all deal with systems that admit equilibria in detailed balance, 
with equal rates of merging and splitting for each reaction taking 
clusters of sizes $i,j$ to one of size $i+j$.
For modeling animal group sizes in particular, Gueron and Levin \cite{GL1995}
discussed several coagulation-fragmentation models for continuous size distributions 
with explicit formulae for equilibria having detailed balance.
However, Niwa argued explicitly in \cite{Niwa-JTB2003} that the observational data 
for pelagic fish is inconsistent with these models.


One of the principal contributions of the present paper is a 
demonstration that the coagulation-fragmentation system 
achieves a scaling-invariant regime in the large-population, continuum limit.
In this limit, all equilibrium size distributions $f_{\rm eq}(x)$, $x\in(0,\infty)$, 
are described rigorously in terms of a single scaling profile $\PhiNew$, with
\begin{equation} \label{eq:feqPhi}
\feq(x) \ \propto \ %
\frac1{\xav} \,\PhiNew\left(\frac{x}{\xav}\right)  ,
\qquad
\xav = \frac
{\int_0^\infty x^2 \feq(x)\,dx}
{\int_0^\infty x \feq(x)\,dx} .
\end{equation}
%
The profile $\PhiNew$ admits a series representation described in 
subsection~\ref{sec:seriesC} below, 
and we provide considerable qualitative information regarding its shape. 
In particular,
\begin{equation}
\PhiNew(x) = g(x) \, e^{- \frac{8}{9} x}, 
\label{Phi-ours}
\end{equation}
where $g$ is a \textit{completely monotone} function
(infinitely differentiable with derivatives that alternate in sign), 
having the following asymptotic behavior:
\begin{eqnarray}
   g(x) &\sim& 6^{1/3} \frac{x^{-2/3}}{\Gamma(1/3)} \, , \qquad \mbox{when} \quad x \to 0, 
\label{eq:profile2} \\
g(x) &\sim& \frac9{8\sqrt6} \frac{x^{-3/2}} {\Gamma(1/2)}  \,, \qquad \mbox{when} \quad x \to \infty .
\label{eq:profile3} 
\end{eqnarray}
Moreover, $\PhiNew$ is a proper probability density, satisfying 
\begin{equation}
1 = \int_0^\infty \PhiNew(x)\,dx = 6 \int_0^\infty x \PhiNew(x)\,dx =
6 \int_0^\infty x^2 \PhiNew(x)\,dx.
\label{eq:profile4}
\end{equation}
\begin{figure}[b!t]
    \begin{center}
        \includegraphics[trim=0 430 0 40, clip,width=\textwidth]{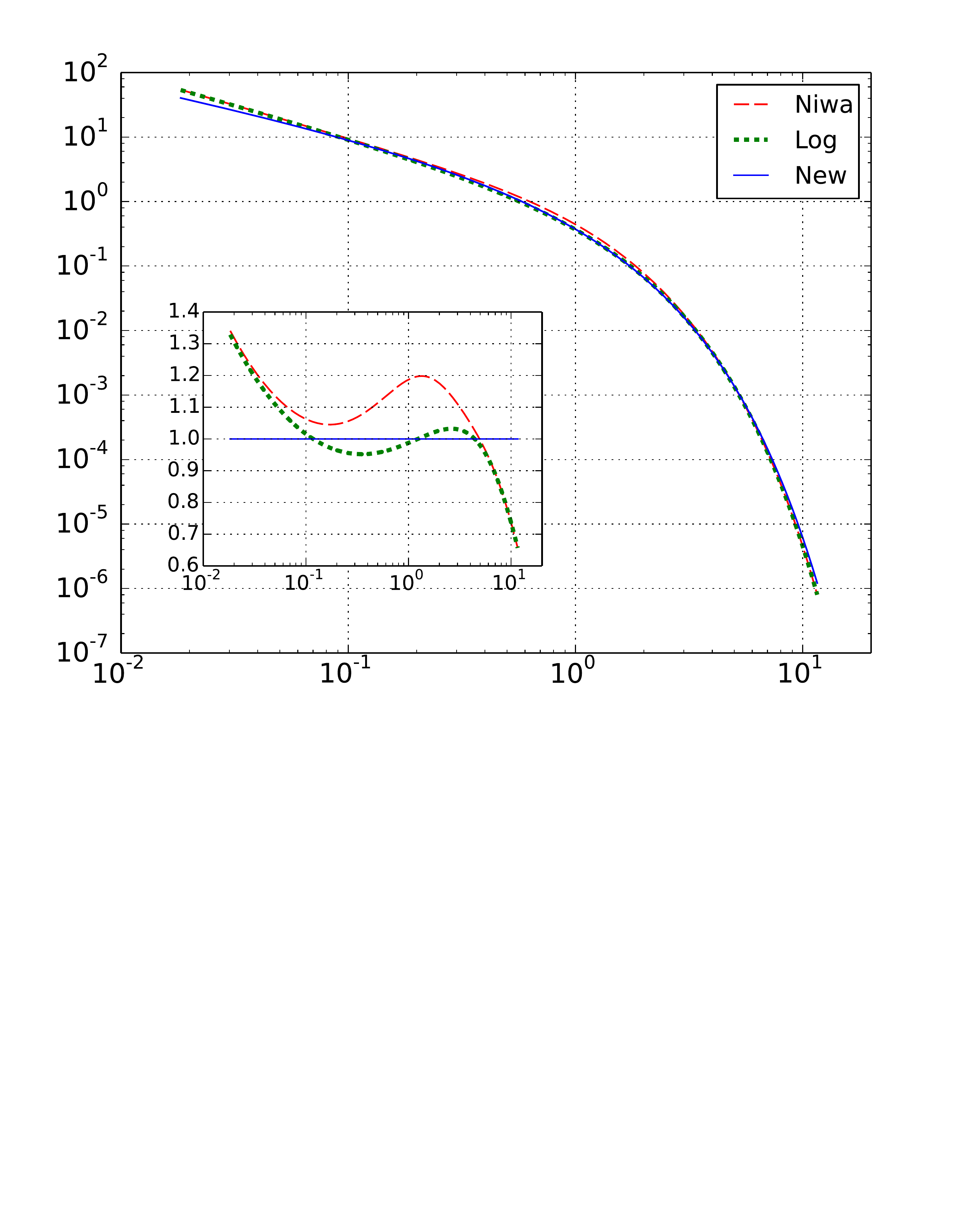}
\caption{Log-log plots of $\Phi(x)$ {vs.}~$x$ for 
Niwa's, logarithmic, and new distribution profiles.
Inset: Ratios $\Phi/\Phi_{\star}$ for all three cases.}
    \label{fig:compare}
    \end{center}
\end{figure}
The crossover in the power-law behavior exhibited by the exponential prefactor 
$g(x)$, from $x^{-2/3}$ for small $x$ to $x^{-3/2}$ for large $x$, 
suggests that it could be difficult in practice to distinguish 
the profile in \qref{Phi-ours} from the expression in \qref{Phi-Niwas}.
Indeed, in Figure~1 we compare 
Niwa's profile in \qref{Phi-Niwas}, as well as the simple logarithmic distribution profile
\[
    \Phi(x)=x\inv \exp(-x) \,,
\]
to the new profile $\PhiNew$ in \qref{Phi-ours}, computed 
using \qref{Phi-fstar} and 45 terms from the power series in subsection~\ref{sec:seriesC}. 
As reported in \cite{Ma_etal_JTB11}, Niwa noted that as far as the 
quality of data fitting is concerned, 
the nested exponential term in \qref{Phi-Niwas} makes little difference, so 
the simpler logarithmic profile serves about as well.
If one compares Figure~1 here with Figure~5 in \cite{Niwa-JTB2003},
this also appears to be the case with the new profile $\PhiNew$, 
despite the difference in power-law exponents for small $x$ and the 
the slightly slower exponential decay rate in the tail of $\PhiNew$ as compared to 
the other cases ($e^{-8x/9}$ vs. $e^{-x}$).
In the `shoulder' region of the log-log plot 
(which covers the bulk of the empirical data in Figure~5 of \cite{Niwa-JTB2003}),
the new profile differs by only a few percent from the logarithmic profile
and by less than 20 percent from the profile in \qref{Phi-Niwas}.

In addition to this description of equilibrium, we develop a rather complete
theory of dynamics in the continuum limit, 
for weak solutions whose initial data are finite measures on $(0,\infty)$.  
We establish convergence to equilibrium for all solutions that
correspond to finite total population (finite first moment).  Furthermore, for initial
data with infinite first moment, solutions converge to zero in a weak sense,
meaning that the population concentrates in clusters whose size grows without
bound as $t\to\infty$.

Previous mathematical studies of equilibria and  dynamical behavior in 
coagulation-fragmentation models 
include work of Aizenman and Bak \cite{AB1979}, 
Carr \cite{Carr1992}, Carr and da Costa \cite{CDC1994}, 
Lauren\c{c}ot and Mischler \cite{LM2003}, and Ca\~nizo \cite{C-JSP2007}, 
as well as a substantial literature related to Becker-Doering equations
(which take into account only clustering events that involve the 
gain or loss of a single individual). 
These works all concern models having 
detailed balance, and rely on some form of $H$ theorem, or entropy/entropy-dissipation arguments. 
For models without detailed balance, there is work of 
Fournier and Mischler \cite{FM2004}, concerning initial data near equilibrium
in discrete systems, and a recent study by Laurencot and van Roessel \cite{LR2015}
of a model with a multiplicative coagulation rate kernel in critical balance with
a fragmentation mechanism that produces an infinite number of fragments.

Arguments involving entropy are not available for the models that we need to treat here.  Instead, it turns out to be possible to use methods from complex function theory related to the Laplace transform. Such methods have been used to advantage to analyze the dynamics of pure coagulation equations with special rate kernels \cite{MP2004,MP2008}; also see \cite{LR2010,LR2015}. 

Specifically what is relevant for the present work is the
theory of {Bernstein functions}, as developed in the book of 
Schilling et al.~ \cite{Schilling_etal_Bernstein}.  
In terms of the ``Bernstein transform,''  the coagulation-fragmentation 
equation in the continuum limit transforms to a nonlocal 
integro-differential equation
which turns out to permit a detailed analysis of equilibria and long-time dynamics.
 
Unfortunately, the Bernstein transform does not appear to produce a tractable form
for the discrete-size coagulation-fragmentation equations coming 
from \cite{Ma_etal_JTB11} that correspond to Niwa's merging-splitting process with the 
features described above. We have found, however, that a simple change
in the fragmentation mechanism allows one to reduce
 the Bernstein transform \textit{exactly} to the same equation as obtained for
 the continuum limit.  
 One only needs to change the splitting rule
 to assign uniform likelihood among the $i+1$ splitting outcomes
 \[
 (0,i), (1,i-1),\ldots, (i,0).
 \]
In the extreme cases, of course, no splitting actually happens. This change effectively
slows the fragmentation rate for small groups. However, the analysis becomes remarkably simpler, as we will see below.

In particular, for this discrete-size model, we can characterize its equilibrium 
distributions (which depend now on total population) in terms of completely monotone sequences with exponential cutoff. Whenever the total population is initially finite,
every solution converges to equilibrium strongly with respect to a size-weighted norm.
And for infinite total population, again the population concentrates in ever-larger 
clusters---the size distribution converges to zero pointwise
while the zeroth moment goes to a nonzero constant.

The plan of the paper is as follows. 
In section 2 we describe the coagulation-fragmentation models under study
in both the discrete-size setting (Model D) and continuous-size setting (Model C).
A summary of results from the theory of Bernstein functions appears in section 3.
Our analysis of Model C is carried out in sections 4--9 (Part I),
and Model D is treated in sections 10--13 (Part II).
Lastly, in Part III (sections 14--16) we relate Model D to a discretization of Model C, 
and prove a discrete-to-continuum limit theorem.

\setcounter{equation}{0}
\section{Coagulation-fragmentation Models D and C}
\label{sec:CF_general}


In this section we describe the coagulation-fragmentation 
mean-field rate equations that  model Niwa's merging-splitting simulations, 
and the corresponding equations 
for both discrete-size and continuous-size distributions 
that we focus upon in this paper.

\subsection{Discrete-size distributions}
We begin with a general description of coagulation-fragmentation 
equations for a system
consisting of clusters $(i)$ having discrete sizes $i \in {\N}=\{1,2,\ldots\}$. 
Clusters can merge or split according to the following reactions:
\begin{eqnarray*}
&&\hspace{-1cm}
(i) + (j) \, \stackrel{a_{i,j}}{\longrightarrow} \, (i+j) \quad \mbox{ (binary coagulation)}, \\
&&\hspace{-1cm}
(i) + (j) \, \stackrel{b_{i,j}}{\longleftarrow} \, (i+j) \quad \mbox{ (binary fragmentation)}. 
\end{eqnarray*}
Here $a_{i,j}$ is the coagulation rate (i.e., the probability that clusters $(i)$ and $(j)$ 
with (unordered) respective sizes $i$ and $j$ merge into the cluster $(i+j)$ of size $i+j$ per unit of time) 
and $b_{i,j}$ is the fragmentation rate (i.e., the probability that a cluster $(i+j)$ 
splits into two clusters $(i)$ and $(j)$ per unit of time). Both $a_{i,j}$ and $b_{i,j}$ are assumed nonnegative, 
and symmetric in $i,j$. 

We focus on a statistical description of this system in terms of the number density $f_i(t)$ 
of clusters of size $i$ at time $t$. 
The size distribution $f(t)=(f_i(t))_{i\in\N}$ evolves according to the discrete coagulation-fragmentation equations,
written in strong form as follows:
\begin{eqnarray}
&&\hspace{-1.5cm}
\frac{\partial f_i}{\partial t}(t) = Q_a(f)_i(t) + Q_b(f)_i(t) ,
\label{eq:CF3_disc}\\
&&\hspace{-1.5cm}
Q_a(f)_i(t) = \frac{1}{2} \sum_{j=1}^{i-1} a_{j , i-j}\, f_j(t)  \, f_{i-j}(t) - \sum_{j=1}^\infty a_{i,j} \, f_i(t) \, f_j(t) , 
\label{eq:CF4_disc} \\
&&\hspace{-1.5cm}
Q_b(f)_i(t) = \sum_{j=1}^\infty b_{i,j} \, f_{i+j}(t) - \frac{1}{2} \sum_{j=1}^{i-1} b_{j , i-j} \, f_i(t) \ .
\label{eq:CF5_disc}
\end{eqnarray}
Here, the terms in $Q_a(f)_i(t)$ (resp.\ in $Q_b(f)_i(t)$) account for gain and loss of clusters of size $i$ 
due to aggregation/coagulation (resp.\ breakup/frag\-mentation).
It is often useful to write this system in weak form, requiring that for any 
suitable test function $\varphi_i$ (in a class to be specified later), 
\begin{eqnarray}
&&\hspace{-1cm} 
\frac{d}{dt} \sum_{i=1}^\infty  \varphi_i \, f_i(t) =
\frac{1}{2} \sum_{i,j=1}^\infty \big( \varphi_{i+j} - \varphi_i - \varphi_j \big) \,  
(a_{i,j} \, f_i(t) \, f_j(t) -b_{i,j} \, f_{i+j}(t) )\ .
\label{eq:CF1_disc}
\end{eqnarray}
This equation can be recast as follows, in a form more suitable for describing the continuous-size analog:
\begin{eqnarray}
&&\hspace{-1cm}
\frac{d}{dt} \sum_{i=1}^\infty  \varphi_i \, f_i(t) =
\frac{1}{2} \sum_{i,j=1}^\infty \big( \varphi_{i+j} - \varphi_i - \varphi_j \big) \,  a_{i,j} \, f_i(t) \, f_j(t) \nonumber \\
&&\hspace{1.5cm}
- \frac{1}{2} \sum_{i=2}^\infty \Big( \sum_{j=1}^{i-1} \big( \varphi_i - \varphi_j - \varphi_{i-j} \big) \,  b_{j , i-j} \, \Big) f_i(t) \,. 
\label{eq:CF2_disc}
\end{eqnarray}
Taking $\varphi_i = i$, we obtain the formal conservation of total population
(corresponding to mass in physical systems): 
\begin{eqnarray}
&&\hspace{-1cm}
\frac{d}{dt} \sum_{i=1}^\infty i \, f_i(t) = 0\,. 
\label{eq:CF_mass_disc}
\end{eqnarray}
However, depending on the rates, it can happen that population decreases 
due to a flux to infinite size.  This phenomenon is known as gelation, associated with formation of infinite-size clusters).

The model written in \cite{Ma_etal_JTB11} essentially corresponds to a specific choice
of rate coefficients  which we take in the form
 \begin{eqnarray}
a_{i,j} = \alpha\,, \qquad b_{i,j} = \frac{\beta}{i+j-1}\,.
\label{eq:rates_niwa_disc} 
\end{eqnarray}
With these coefficients, the coagulation rate $\alpha$ is independent of cluster sizes.
Moreover, the overall fragmentation rate for the breakdown of clusters of size $i$ 
into clusters of any smaller size is
given by 
\[
\frac12\sum_{j=1}^{i-1}b_{j,i-j}=\frac12 \beta\,.
\]
This rate is constant, independent of $i$, and these clusters break into pairs 
with sizes $(1,i-1)$, $(2,i-2),\ldots(i-1,1)$ with equal probability.

As mentioned earlier, we have found that
a variant of this model  is far more accessible to analysis 
by  the transform methods which we will employ.  Namely, we can imagine that
clusters of size $i$ now split into pairs $(0,i)$, $(1,i-1),\ldots(i,0)$
with equal probability, and take the rate coefficients in the form
\begin{eqnarray}
a_{i,j} = \alpha\,, \qquad b_{i,j} = \frac{\beta}{i+j+1} \,.
\label{eq:rates_niwa_discD} 
\end{eqnarray}
We refer to the coagulation-fragmentation equations 
\qref{eq:CF3_disc}-\qref{eq:CF5_disc} with the coefficients 
in \qref{eq:rates_niwa_discD} as \textbf{Model D} (D for discrete size).
This model also arises in a natural way as a discrete approximation of 
Model C; see Section~14.
The overall effective fragmentation rate for clusters of size $i$
becomes $\frac12\beta\frac{i-1}{i+1}$,
because we do not actually have clusters of size zero.

In general, an equilibrium solution $f=(f_i)$ of equations 
\qref{eq:CF3_disc}-\qref{eq:CF5_disc} is in \textit{detailed balance}
if
\begin{equation}\label{eq:detailed}
a_{i,j}f_if_j = b_{i,j} f_{i+j}
\end{equation}
for all $i,j\in\N$. 
It is easy to see that for  neither choice of coefficients in 
\qref{eq:rates_niwa_disc} nor \qref{eq:rates_niwa_discD} 
do the equations  admit an equilibrium in detailed balance.
 
 %
 %
 %
 \subsection{Continuous-size distributions}
 
To study systems with large populations and typical cluster sizes,
some simplicity is gained by passing to a continuum model in which 
clusters $(x)$ may assume a continuous range of sizes $x\in\rplus=(0,\infty)$.  
The corresponding reactions are  written schematically now as
\begin{eqnarray*}
&&\hspace{-1cm}
(x) + (y) \, \stackrel{a(x,y)}{\longrightarrow} \, (x+y) \quad \mbox{ (binary coagulation)}, \\
&&\hspace{-1cm}
(x) + (y) \, \stackrel{b(x,y)}{\longleftarrow} \, (x+y) \quad \mbox{ (binary fragmentation)}, 
\end{eqnarray*}
where $a(x,y)$ and $b(x,y)$ are the coagulation and  fragmentation rates, respectively. Again, both $a$ and $b$ are nonnegative and symmetric. 

The distribution of cluster sizes is now described in terms of a (cumulative) distribution function 
$F_t(x)$, which denotes the number density of clusters with size in $(0,x]$ at time $t$. 
According to probabilistic convention, we use the same notation $F_t$ to denote the
measure on $(0,\infty)$ with this distribution function; thus we write
\[
F_t(x) = \int_{(0,x]} F_t(dx)\,.
\]
The measure $F_t$ evolves according to the following size-continuous coagulation-fragmentation equation, which we write in weak form.
One requires that for any  suitable test function $\varphi (x)$,
%
\begin{equation}\label{eq:CF2} \begin{split}
&\frac{d}{dt} \int_{\rplus} \varphi(x) \, F_t(dx) = 
\frac{1}{2} \int_{{\mathbb R}_+^2} \big( \varphi (x+y) - \varphi(x) - \varphi(y) \big) a(x,y) 
\, F_t(dx) \, F_t(dy) 
\\ & \quad 
- \frac{1}{2} \int_{{\mathbb R}_+} \Big( \int_0^x \big( \varphi (x) - \varphi(y) - \varphi(x-y) \big) \,  b(y,x-y) \, dy \, \Big)  F_t(dx) . 
\end{split}\end{equation}
Taking $\varphi(x) = x$, we obtain the formal conservation of total population: 
\begin{equation}
\frac{d}{dt} m_1(F_t)=0\,,
\label{eq:CF_mass}
\end{equation}
where in general, we denote the $k^{\rm th}$ moment of $F_t$ by
 \[
  m_k(F_t):=\int_{{\mathbb R}_+} x^k \, F_t(dx)\,. 
\]
However, again there might be loss of mass due to gelation, and now also mass flux to zero
may be possible (shattering, associated with the break-up of clusters to zero-size dust).

The specific rate coefficients that we will study correspond to constant coagulation rates
and constant overall binary fragmentation rates with uniform distribution of fragments
are
\begin{eqnarray}
&&\hspace{-1cm}
a(x,y) = A\,, \qquad b(x,y) = \frac{B}{x+y}, \label{eq:rates_niwa_cont}
\end{eqnarray}
We refer to the coagulation-fragmentation equations 
\qref{eq:CF2} with these coefficients as 
\textbf{Model C} (C for continuous size).

For size distributions with density, written as $F_t(dx)=f(x,t)\,dx$,
Model C is written formally in strong form as follows:
\begin{eqnarray}
&&\hspace{-1.5cm}
\frac{\partial f}{\partial t}(x,t) = A\,Q_a(f)(x,t) + B\,Q_b(f)(x,t) ,
\label{eq:CF3_Niwa_11}\\
&&\hspace{-1.5cm}
Q_a(f)(x,t) = \frac12 \int_0^x \, f(y,t) \, f(x-y,t) \, dy -   \, f(x,t) \, \int_0^\infty f(y,t) \, dy , 
\label{eq:CF4_Niwa_11} \\
&&\hspace{-1.5cm}
Q_b(f)(x,t) = -\frac12 f(x,t)  +  \int_x^\infty \frac{f(y,t)}{y} \, dy . 
\label{eq:CF5_Niwa_11}
\end{eqnarray}

To argue that there are no equilibrium solutions of Model C 
in detailed balance, we need a suitable definition of 
detailed balance for the equations \qref{eq:CF2} in weak form.
In section~\ref{sec:cts_balance} below we will propose such a definition, 
and verify that no finite measures on $(0,\infty)$ satisfy it. 

\subsection{Scaling relations for Models D and C}

By simple scalings, we can relate the solutions of Models D and C to solutions 
of the same models with conveniently chosen coefficients. 

Let us begin with Model D. Suppose $\hat f(t) = (\hat f_i(t))_{i\in\N}$ is
a solution of Model D for the particular coefficients $\alpha =\beta=2$. 
Then a solution of Model D for general coefficients $\alpha$, $\beta>0$ is given by
\begin{equation}\label{eq:D_scale}
f_i(t) := \frac \beta\alpha \, \hat f_i(\beta t/2).
\end{equation}
For the purposes of analysis, then it suffices to treat the case $\alpha=\beta=2$,
as we assume below.

For Model C we have a similar expression. If $\hat F_t(x)$ is a weak solution
of Model C for particular coefficients $A=B=2$, then a
solution for general $A$, $B>0$ is given by 
\begin{equation}\label{eq:C_scale}
F_t(x) := \frac BA\, \hat F_{Bt/2}(x).
\end{equation}
Thus it suffices to deal with the case $A=B=2$ as below.

Importantly, Model C has an additional scaling invariance involving dilation of size.
If $\hat F_t(x)$ is any solution, for any $A$ and $B$, then 
\begin{equation}\label{scaleC:F}
F_t(x) := \hat F_t(Lx)
\end{equation}
is also a solution, for the same $A$ and $B$. 
Moreover, if we suppose that $\hat F_t(x)$ 
has first moment 
$\hat m_1 = \int_0^\infty x\,\hat F_t(dx)=1$, then
we can obtain
a solution $F_t(x)$ with arbitrary finite first moment 
\begin{equation}
m_1 = \int_0^\infty x\,F_t(dx)
\end{equation}
by taking $L=1/m_1$, so that
\begin{equation}
F_t(x) = \hat F_t(x/m_1)\,.
\end{equation}
For solutions with (fixed) total population, it therefore suffices to 
analyze the case that $m_1=1$.


\section{Bernstein functions and transforms}

Throughout this paper we make use of various results from the
theory of Bernstein functions, as laid out in the book \cite{Schilling_etal_Bernstein}.
We summarize here a number of key properties of Bernstein functions that we need
in the sequel. 

Recall that a function $g\colon(0,\infty)\to\R$ is \textit{completely monotone}
if it is infinitely differentiable and its derivatives satisfy $(-1)^ng\supn(x)\ge0$
for all real $x>0$ and integer $n\ge0$. By Bernstein's theorem, $g$ is completely
monotone if and only if it is the Laplace transform of some (Radon) measure on 
$[0,\infty)$. 

\begin{definition}
A function $U\colon (0,\infty)\to \R$ is a \textit{Bernstein function} if it is 
infinitely differentiable, nonnegative, and its derivative $U'$ is completely monotone.
\end{definition}
The main representation theorem for these functions 
\cite[Thm. 3.2]{Schilling_etal_Bernstein}
associates to each Bernstein function $U$ a unique 
\textit{L\'evy triple} $(a_0,a_\infty,F)$ as follows.
(Below, the notation $a\wedge b$ means the minimum of $a$ and $b$.)
\begin{theorem}
 A function $U\colon(0,\infty)\to\R$ is a Bernstein function if and only if
it has the representation
\begin{equation}\label{def:Btransform}
U(s) = a_0s+a_\infty+\int_{(0,\infty)} (1-e^{-sx})\,F(dx)\,, \quad s\in(0,\infty),
\end{equation}
where $a_0$, $a_\infty\ge0$ and $F$ is a measure satisfying
\begin{equation}
\int_{(0,\infty)} (x\wedge1)F(dx)<\infty. 
\end{equation}
In particular, the triple $(a_0,a_\infty,F)$ uniquely determines $U$ and vice versa.
\end{theorem}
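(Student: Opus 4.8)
The plan is to prove the two implications separately, folding the uniqueness assertion into the forward direction.

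\textbf{Sufficiency and uniqueness.} Suppose first that $U$ is given by \qref{def:Btransform} with $a_0,a_\infty\ge 0$ and $\int_{(0,\infty)}(x\wedge 1)\,F(dx)<\infty$. Since $0\le 1-e^{-sx}\le (sx)\wedge 1$, the integral converges for every $s>0$ and $U\ge 0$. The elementary bound $x e^{-sx}\le C_\delta\,(x\wedge 1)$, valid for all $x>0$ and all $s\ge\delta>0$, licenses differentiation under the integral sign, giving
\[
U'(s)=a_0+\int_{(0,\infty)} x e^{-sx}\,F(dx)=\int_{[0,\infty)} e^{-sx}\,\nu(dx),\qquad \nu:=a_0\delta_0+x\,F(dx).
\]
The measure $\nu$ is Radon on $[0,\infty)$ (local finiteness near $0$ uses exactly $\int(x\wedge1)F(dx)<\infty$), so by Bernstein's theorem $U'$ is completely monotone and $U$ is a Bernstein function. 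Moreover $\nu$ is, again by the uniqueness clause of Bernstein's theorem, the only measure with Laplace transform $U'$; hence $a_0=\nu(\{0\})$ and $F=x^{-1}\nu|_{(0,\infty)}$ are determined by $U$, and letting $s\downarrow 0$ in \qref{def:Btransform} (dominated convergence against $C(x\wedge 1)$) shows $a_\infty=\lim_{s\downarrow 0}U(s)$ is as well. This gives uniqueness of the triple.

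\textbf{Necessity.} Now suppose $U$ is a Bernstein function. Its derivative $U'$ is completely monotone, so Bernstein's theorem furnishes a unique Radon measure $\nu$ on $[0,\infty)$ with $U'(s)=\int_{[0,\infty)}e^{-sx}\,\nu(dx)$; set $a_0:=\nu(\{0\})$ and $F:=x^{-1}\nu$ restricted to $(0,\infty)$. Because $U'\ge 0$, the function $U$ is nondecreasing and, being nonnegative, has a finite limit $a_\infty:=\lim_{s\downarrow0}U(s)=\inf_{s>0}U(s)\in[0,\infty)$. For $0<s_1<s_2$, integrating $U'$ and using Tonelli,
\[
U(s_2)-U(s_1)=\int_{s_1}^{s_2}U'(t)\,dt=a_0(s_2-s_1)+\int_{(0,\infty)}\bigl(e^{-s_1 x}-e^{-s_2 x}\bigr)\,F(dx).
\]
Taking $s_2=1$ and letting $s_1\downarrow 0$, monotone convergence yields $\int_{(0,\infty)}(1-e^{-x})\,F(dx)=U(1)-a_\infty-a_0<\infty$, which is precisely the condition $\int(x\wedge1)\,F(dx)<\infty$, since $1-e^{-x}$ and $x\wedge 1$ are comparable on $(0,\infty)$. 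With this integrability in hand, for an arbitrary $s>0$ I would take $s_2=s$ and let $s_1\downarrow0$ in the identity above to obtain $U(s)=a_\infty+a_0 s+\int_{(0,\infty)}(1-e^{-sx})\,F(dx)$, i.e.\ \qref{def:Btransform}.

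\textbf{Main obstacle.} Essentially everything except one point is bookkeeping with Bernstein's theorem and Fubini--Tonelli/monotone convergence. The delicate step is establishing $\int_{(0,\infty)}(x\wedge1)\,F(dx)<\infty$, that is, controlling the small-$x$ mass of $F$: the Laplace representation of $U'$ alone cannot give this, because $U'$ does not see the constant $a_\infty$. The resolution is the monotone-convergence computation above, which exploits the fact that a Bernstein function, having a nonnegative (completely monotone) derivative, is monotone increasing and hence possesses a finite limit $a_\infty$ at $0^+$; this finiteness is exactly what bounds $\int(1-e^{-x})\,F(dx)$. Secondary points to watch: the measure $\nu$ from Bernstein's theorem may charge $\{0\}$ (producing the linear term $a_0 s$) and $(0,\infty)$ simultaneously, and the decomposition must be arranged so that the constant of integration recovered in the limit $s_1\downarrow 0$ is precisely $a_\infty=U(0^+)$.
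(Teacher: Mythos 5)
Your proof is correct. The paper does not prove this theorem at all --- it is quoted verbatim from the book of Schilling et al.\ (their Theorem 3.2) --- and your argument is essentially the standard one from that reference: reduce to the Hausdorff--Bernstein--Widder theorem applied to the completely monotone derivative $U'$, recover $a_0$ as the atom of the representing measure at $0$ and $F$ as $x^{-1}\nu$ on $(0,\infty)$, and use the finiteness of $a_\infty=U(0^+)$ (which exists because $U$ is nonnegative and nondecreasing) together with monotone convergence to obtain the integrability $\int_{(0,\infty)}(x\wedge1)\,F(dx)<\infty$. Your identification of the genuinely delicate step --- that the small-$x$ mass of $F$ is controlled only through the finiteness of $U(0^+)$, not through the Laplace representation of $U'$ alone --- is exactly right.
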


We point out that $U$ determines $a_0$ and $a_\infty$ via the relations
\begin{equation}\label{eq:a0ainfty}
a_0 = \lim_{s\to\infty} \frac{U(s)}s\,, \qquad
a_\infty = U(0^+)=\lim_{s\to0}U(s)\, .
\end{equation}

\begin{definition} Whenever \qref{def:Btransform} holds, we call $U$
the \textbf{Bernstein transform} of the L\'evy triple
$(a_0,a_\infty,F)$.  If $a_0=a_\infty=0$, we call $U$ the Bernstein
transform of the L\'evy measure $F$, and write $U=\breve F$.
\end{definition}

%
Many basic properties of Bernstein functions follow from
Laplace transform theory.   Yet Bernstein functions have beautiful
and distinctive properties that are worth delineating separately. 
The second statement in the following proposition is proved in Lemma~2.3
of \cite{ILP2015}.
\begin{proposition}\label{prop:compose}
The composition of any two Bernstein functions is Bernstein. 
Moreover, if $V\colon(0,\infty)\to(0,\infty)$ is bijective and $V'$ is Bernstein,
then the inverse function $V\inv$ is Bernstein.
\end{proposition}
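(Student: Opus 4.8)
The plan is to prove the two assertions separately: the composition statement has a short self-contained proof, while for the inverse statement I would invoke \cite[Lemma~2.3]{ILP2015}, as the paper indicates, since a self-contained argument there runs into a genuine difficulty described below.

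For the composition statement, let $U$ and $W$ be Bernstein functions and set $\Psi := U\circ W$. Then $\Psi$ is infinitely differentiable and, because $U\ge 0$, nonnegative, so by the definition of a Bernstein function it suffices to show that $\Psi' = (U'\circ W)\,W'$ is completely monotone. I would obtain this from two elementary closure properties. First, a product $fg$ of completely monotone functions is completely monotone: by the Leibniz rule each summand $\binom{n}{k}f^{(k)}g^{(n-k)}$ of $(fg)^{(n)}$ has sign $(-1)^k(-1)^{n-k}=(-1)^n$, so $(-1)^n(fg)^{(n)}\ge 0$. Second, a composite $h\circ W$ of a completely monotone $h$ with a Bernstein function $W$ is completely monotone: apart from the trivial case $W\equiv 0$, a Bernstein function is strictly positive on $(0,\infty)$, so $h\circ W$ is well defined, and the Fa\`a di Bruno formula writes $(h\circ W)^{(n)}$ as a sum of positive multiples of terms $h^{(k)}(W)\prod_{i=1}^{k}W^{(m_i)}$ with each $m_i\ge 1$ and $m_1+\dots+m_k=n$; here $h^{(k)}(W)$ has sign $(-1)^k$ and each $W^{(m_i)}$ has sign $(-1)^{m_i-1}$ (since $W'$ is completely monotone), so every term has sign $(-1)^k(-1)^{n-k}=(-1)^n$, whence $(-1)^n(h\circ W)^{(n)}\ge 0$. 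Applying the second property with $h=U'$ (completely monotone because $U$ is Bernstein) shows $U'\circ W$ is completely monotone; $W'$ is completely monotone because $W$ is Bernstein; and then the first property shows $\Psi'=(U'\circ W)\,W'$ is completely monotone. Hence $U\circ W$ is Bernstein.

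For the inverse statement, write $g:=V'$ and $W:=V\inv$. The function $g$ is Bernstein, hence nonnegative; it is not identically zero (otherwise $V$ would be constant, contradicting bijectivity), hence strictly positive on $(0,\infty)$. Thus $V$ is a smooth strictly increasing bijection of $(0,\infty)$ with $V(0^+)=0$, its inverse $W$ is smooth and strictly increasing with $W(0^+)=0$, and differentiating $V(W(s))=s$ gives $W'(s)=1/g(W(s))$. Proving that this $W'$ is completely monotone---equivalently, that $W$ is Bernstein---is exactly the content of \cite[Lemma~2.3]{ILP2015}, which I would cite directly. I expect this to be the real obstacle: although the reciprocal of a nonzero Bernstein function is completely monotone, one cannot conclude that $s\mapsto 1/g(W(s))$ is completely monotone merely by composing $1/g$ with $W$, because the composition rule used above would require $W$ itself to be Bernstein---precisely what is to be proved. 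The argument in \cite{ILP2015} is designed to break this circularity, and I would rely on it.
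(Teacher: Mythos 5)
Your proposal is correct and handles the statement essentially as the paper does: the paper offers no proof of its own, treating the composition part as standard (it is \cite[Cor.~3.8]{Schilling_etal_Bernstein}) and explicitly attributing the inverse part to Lemma~2.3 of \cite{ILP2015}, which you also cite. Your self-contained Leibniz/Fa\`a di Bruno argument for the composition part is the standard proof and is sound, and your remark about the circularity that blocks a naive compositional proof of the inverse part correctly identifies why that half genuinely requires the cited lemma.
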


{\sl Topologies.} Any pointwise limit of a sequence of Bernstein functions is Bernstein.
The topology of this pointwise convergence corresponds to a notion of weak convergence
related to the associated L\'evy triples in a way that is not fully characterized
in \cite{Schilling_etal_Bernstein}, but may be described as follows. 
Let $\calM_+[0,\infty]$ denote the space of nonnegative finite (Radon) measures on
the compactified half-line $[0,\infty]$. 

To each L\'evy triple $(a_0,a_\infty,F)$ we associate a finite measure 
$\kappa\in\calM_+[0,\infty]$ by appending atoms of magnitude
$a_0$ and $a_\infty$ respectively at $0$ and $\infty$ to the finite measure $(x\wedge1)F(dx)$, writing
\begin{equation}\label{def:kappa}
\kappa(dx) = a_0\delta_0 + a_\infty\delta_\infty + (x\wedge1)F(dx).
\end{equation} 
The correspondence $ (a_0,a_\infty,F)  \mapsto\kappa$ is bijective,
as is the correspondence $(a_0,a_\infty,F)\to U$.
We say that a sequence of finite measures $\kappa_n$ \textit{converges weakly}
on $[0,\infty]$ to $\kappa$ and write $\kappa_n\wkto\kappa$ 
if  for each continuous $g\colon[0,\infty]\to\R$,
\[
\int_{[0,\infty]} g(x)\,\kappa_n(dx) \to \int_{[0,\infty]} g(x)\,\kappa(dx) 
\quad\mbox{as $n\to\infty$}.
\]
\begin{theorem}\label{th:Btopology}
Let $(a_0\supn,a_\infty\supn,F\supn)$, be a sequence of L\'evy triples,
with corresponding Bernstein transforms $U_n$ and $\kappa$-measures $\kappa_n$
on $[0,\infty]$. Then the following are equivalent:
\begin{itemize}
\item[(i)]  $U(s):=\lim_{n\to\infty}U_n(s)$ exists for each $s\in(0,\infty)$. 
\item[(ii)] $\kappa_n \wkto \kappa$ as $n\to\infty$, where $\kappa$ is a finite
measure on $[0,\infty]$. 
\end{itemize}
If either (i) or (ii) hold, then the respective limit $U$, $\kappa$ is
associated with a unique L\'evy triple $(a_0,a_\infty,F)$. 
\end{theorem}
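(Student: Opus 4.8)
The plan is to rewrite the Bernstein transform as the integral of a \emph{fixed} continuous function against the measure $\kappa$, after which both implications reduce to soft weak-compactness on the compact space $[0,\infty]$. For each $s>0$ introduce $g_s\colon[0,\infty]\to\R$ defined by $g_s(0)=s$, $g_s(\infty)=1$, and $g_s(x)=(1-e^{-sx})/(x\wedge1)$ for $x\in(0,\infty)$. A direct check shows $g_s\in C([0,\infty])$: it is plainly continuous on $(0,\infty)$, while $(1-e^{-sx})/x\to s$ as $x\to0^+$ and $1-e^{-sx}\to1$ as $x\to\infty$. Comparing the representation \qref{def:Btransform} with the definition \qref{def:kappa} then gives the identity
\begin{equation*}
U(s)=\int_{[0,\infty]} g_s(x)\,\kappa(dx),
\end{equation*}
valid for any L\'evy triple together with its Bernstein transform $U$ and its $\kappa$-measure; recall also, as noted in the text, that $(a_0,a_\infty,F)\mapsto\kappa$ is a bijection onto $\calM_+[0,\infty]$, so conversely every finite measure on $[0,\infty]$ is the $\kappa$-measure of a unique L\'evy triple. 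Granting the identity, (ii)$\Rightarrow$(i) is immediate: if $\kappa_n\wkto\kappa$ then $U_n(s)=\int g_s\,d\kappa_n\to\int g_s\,d\kappa=:U(s)$ for each $s>0$, and $\kappa$, being finite on $[0,\infty]$, is the $\kappa$-measure of a L\'evy triple $(a_0,a_\infty,F)$ whose Bernstein transform is exactly this $U$.

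For (i)$\Rightarrow$(ii) I would argue by compactness. The elementary inequality $1-e^{-x}\ge(1-e^{-1})(x\wedge1)$ for all $x\ge0$ (from concavity of $1-e^{-x}$ on $[0,1]$ and monotonicity beyond) gives
\begin{equation*}
U_n(1)=a_0\supn+a_\infty\supn+\int_{(0,\infty)}(1-e^{-x})\,F\supn(dx)\ \ge\ (1-e^{-1})\,\kappa_n([0,\infty]),
\end{equation*}
so $\sup_n\kappa_n([0,\infty])\le(1-e^{-1})\inv\sup_n U_n(1)<\infty$ because $U_n(1)$ converges. As $[0,\infty]$ is compact metrizable, this uniformly bounded sequence of measures is weak-$*$ sequentially precompact (Riesz plus Banach--Alaoglu, using separability of $C([0,\infty])$): every subsequence of $(\kappa_n)$ has a further subsequence $\kappa_{n_k}\wkto\mu$ with $\mu$ finite on $[0,\infty]$. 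Along any such subsequence, the displayed identity and continuity of $g_s$ give $U(s)=\lim_k U_{n_k}(s)=\int g_s\,d\mu$ for every $s>0$; that is, the Bernstein transform of the (unique) L\'evy triple whose $\kappa$-measure is $\mu$ equals $U$. By the uniqueness clause of \qref{def:Btransform}, that triple, hence $\mu$, is the same for every convergent subsequence. Therefore all subsequential weak limits of $(\kappa_n)$ coincide with a single measure $\kappa$, and a routine subsequence argument upgrades this to $\kappa_n\wkto\kappa$; the associated L\'evy triple is the unique one with Bernstein transform $U$, which also yields the final sentence of the theorem.

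The only steps needing genuine (if routine) care are the continuity of $g_s$ at the two ideal endpoints of the compactification, so that $g_s$ is an admissible test function for $\wkto$, and the passage from mere pointwise convergence of $(U_n)$ to a uniform mass bound on $(\kappa_n)$, for which the inequality $1-e^{-x}\ge(1-e^{-1})(x\wedge1)$ is the crucial elementary input. I expect the conceptual crux — and the sole place where anything specific to Bernstein functions enters — to be the uniqueness in the L\'evy--Khintchine-type representation \qref{def:Btransform}: that is precisely what collapses all subsequential limits to one measure. Everything else is soft functional analysis on the compact space $[0,\infty]$, and no quantitative estimates beyond evaluating $U_n$ at $s=1$ are required.
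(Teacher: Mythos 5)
Your argument is correct and complete. Note that the paper itself does not prove this theorem: it cites Theorem~3.1 of \cite{MP2008} and defers a "simpler, direct proof" to a forthcoming reference, so there is no in-paper proof to compare against. Your proof is exactly the kind of direct argument one would want. The central device — writing $U(s)=\int_{[0,\infty]}g_s\,d\kappa$ with the fixed continuous kernel $g_s(x)=(1-e^{-sx})/(x\wedge1)$, extended by $g_s(0)=s$ and $g_s(\infty)=1$ — correctly reproduces \qref{def:Btransform} from \qref{def:kappa}, and the continuity of $g_s$ at both ideal endpoints checks out. The implication (ii)$\Rightarrow$(i) is then immediate, and for (i)$\Rightarrow$(ii) your chain is sound: the elementary bound $1-e^{-x}\ge(1-e^{-1})(x\wedge1)$ gives $\sup_n\kappa_n([0,\infty])\le(1-e^{-1})\inv\sup_nU_n(1)<\infty$; weak-$*$ sequential compactness on the compact metrizable space $[0,\infty]$ yields subsequential limits; and the uniqueness clause of the representation theorem (a triple determines $U$ and vice versa), combined with the bijection between triples and $\kappa$-measures, forces all subsequential limits to coincide, after which the standard sub-subsequence argument (valid since the weak-$*$ topology is metrizable on bounded sets) closes the loop. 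You correctly identify the one nontrivial input as the injectivity of the L\'evy--Khintchine-type representation; everything else is indeed soft.
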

This result is essentially a restatement of Theorem~3.1 in \cite{MP2008},
where different terminology is used. 
A simpler, direct proof will appear in \cite{ILPxx}, however.

For finite measures on $[0,\infty)$, the topology of pointwise convergence 
of Bernstein functions is related to more usual notions of weak convergence as follows.
Let $\calM_+[0,\infty)$ be the space of nonnegative finite (Radon) measures
on $[0,\infty)$.
Given $F$,  $F_n\in \calM_+[0,\infty)$ for $n\in\N$, we say $F_n$
converges to $F$ \textit{vaguely} on $[0,\infty)$ and write $F_n\vto F$
provided  
\begin{equation}\label{lim:vlim}
\int_{[0,\infty)} g(x)\, F_n(dx) \to \int_{[0,\infty)} g(x)\, F(dx) 
\end{equation}
for all functions $g\in C_0[0,\infty)$, the space of continuous functions
on $[0,\infty)$ with limit zero at infinity. 
We say $F_n$ converges to $F$ \textit{narrowly} and write $F_n\nto F$ 
if \qref{lim:vlim} holds for all $g\in C_b[0,\infty)$, the space of
bounded continuous functions on $[0,\infty)$. 
As is well-known  \cite[p.~264, Thm.~13.35]{Klenke}, $F_n\nto F$ if and only if 
both $F_n\vto F$ and $m_0(F_n)\to m_0(F)$.

Denote the Laplace transform of any $F\in\calM_+[0,\infty)$ by 
\begin{equation}
\calL F(s) = \int_{[0,\infty)} e^{-sx} F(dx)\,.
\end{equation}
By the usual Laplace continuity theorem,  
$F_n\vto F$ if and only if $\calL F_n(s)\to\calL F(s)$ for all $s\in(0,\infty)$. 
Regarding narrow convergence, we will make use of the following result
for  the space $\calM_+(0,\infty)$ consiting of finite nonnegative measures 
on $(0,\infty)$.  We regard elements of this space as elements of 
$\calM_+[0,\infty)$ having no atom at $0$.

\begin{proposition} \label{prop:narrow}
Assume $F$, $F_n\in\calM_+(0,\infty)$ for $n\in\N$. Then the following are equivalent
as $n\to\infty$.
\item[(i)] $F_n$ converges narrowly to $F$, i.e., $F_n\nto F$.
\item[(ii)]  The Laplace transforms $\calL F_n(s)\to\calL F(s)$, for each $s\in[0,\infty)$.
\item[(iii)] The Bernstein transforms $\breve F_n(s) \to \breve F(s)$, for each $s\in[0,\infty]$.
\item[(iv)] The Bernstein transforms $\breve F_n(s) \to \breve F(s)$, uniformly for $s\in(0,\infty)$. 
\end{proposition}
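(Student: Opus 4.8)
The plan is to prove the chain of equivalences (i) $\Leftrightarrow$ (ii) $\Leftrightarrow$ (iii) $\Leftrightarrow$ (iv). The equivalence (i) $\Leftrightarrow$ (ii) is essentially classical: since all measures live on $(0,\infty)$ and hence have no atom at $0$, the functions $x\mapsto e^{-sx}$ for $s\in(0,\infty)$ lie in $C_0[0,\infty)$, so the Laplace continuity theorem quoted just above gives $F_n\vto F$ iff $\calL F_n(s)\to\calL F(s)$ for all $s\in(0,\infty)$. The extra information at $s=0$, namely $\calL F_n(0)=m_0(F_n)\to m_0(F)=\calL F(0)$, is exactly the mass-convergence condition that upgrades vague to narrow convergence by the cited result of Klenke. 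So (ii) (with $s$ ranging over the closed interval $[0,\infty)$) is equivalent to $F_n\vto F$ together with $m_0(F_n)\to m_0(F)$, which is (i).

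Next I would connect the Laplace and Bernstein transforms. For $F\in\calM_+(0,\infty)$ with $a_0=a_\infty=0$, the defining formula \qref{def:Btransform} reads $\breve F(s)=\int_{(0,\infty)}(1-e^{-sx})F(dx)=m_0(F)-\calL F(s)$ for $s\in(0,\infty)$, while at the endpoints $\breve F(0^+)=0$ and $\breve F(\infty)=m_0(F)$ (the latter using that there is no atom at $0$, so $e^{-sx}\to 0$ $F$-a.e. and monotone convergence applies; note $m_0(F)<\infty$). Thus on $(0,\infty)$ we have the pointwise identity $\breve F_n(s)\to\breve F(s)$ iff $m_0(F_n)-\calL F_n(s)\to m_0(F)-\calL F(s)$. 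Taking $s\to\infty$ inside (iii) forces $m_0(F_n)\to m_0(F)$, and then the identity shows $\calL F_n(s)\to\calL F(s)$ for every $s\in[0,\infty)$; conversely (ii) gives $m_0(F_n)=\calL F_n(0)\to\calL F(0)=m_0(F)$ and hence $\breve F_n(s)\to\breve F(s)$ for $s\in(0,\infty)$, with the endpoint values $\breve F_n(0^+)=0$ and $\breve F_n(\infty)=m_0(F_n)\to m_0(F)=\breve F(\infty)$ handled separately. This establishes (ii) $\Leftrightarrow$ (iii).

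Finally (iv) $\Rightarrow$ (iii) is trivial, and for (iii) $\Rightarrow$ (iv) I would use the structural rigidity of Bernstein functions: each $\breve F_n$ is nondecreasing and concave on $(0,\infty)$ (its derivative is completely monotone, hence nonnegative and nonincreasing), it vanishes at $0^+$, and it is bounded by $m_0(F_n)$, with $m_0(F_n)\to m_0(F)=\sup\breve F<\infty$. A pointwise-convergent sequence of uniformly bounded monotone concave functions on $(0,\infty)$ whose suprema converge to the (finite) supremum of the limit converges uniformly: monotonicity plus concavity gives equicontinuity on compact subsets of $(0,\infty)$ (Dini-type argument), convergence of the suprema controls the behavior near $+\infty$, and the common value $0$ at $0^+$ together with monotonicity controls the behavior near $0$. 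I expect this last implication, (iii) $\Rightarrow$ (iv), to be the main point requiring care — one must genuinely exploit monotonicity/concavity of Bernstein functions, not just abstract Laplace-transform continuity, and must be careful to get uniformity all the way down to $s=0^+$ and out to $s=\infty$ rather than merely on compact subintervals. One clean route is to invoke Theorem~\ref{th:Btopology}: (iii) at interior points gives (i) of that theorem, hence $\kappa_n\wkto\kappa$ on the compact space $[0,\infty]$ with $\kappa$ the $\kappa$-measure of the limiting L\'evy triple $(0,0,F)$; since $s\mapsto \breve F(s)$ depends on $\kappa$ through integration of a fixed family of functions continuous on $[0,\infty]$, weak convergence of $\kappa_n$ plus a standard argument (the family $\{s\mapsto (1-e^{-sx})/(x\wedge 1)\}$ is, after compactification, uniformly equicontinuous in $s$) yields uniform convergence of $\breve F_n$ on all of $(0,\infty)$.
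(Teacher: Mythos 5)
Your proposal is correct and follows essentially the same route as the paper: (i)$\Leftrightarrow$(ii) via the Laplace continuity theorem plus convergence of total masses, (ii)$\Leftrightarrow$(iii) via the identity $\breve F_n(s)=m_0(F_n)-\calL F_n(s)$ together with $\breve F_n(\infty)=m_0(F_n)=\calL F_n(0)$, and (iii)$\Leftrightarrow$(iv) by exploiting monotonicity of the Bernstein transforms on the compactified interval $[0,\infty]$. The only difference is that where you sketch the monotone-plus-pointwise-implies-uniform step by hand (and offer an alternative via Theorem~\ref{th:Btopology}), the paper simply cites P\'olya's extension of Dini's theorem, which is precisely the statement your Dini-type argument reproves.
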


\begin{proof}
The equivalence of (i) and (ii) follows from the discussion above. 
Because 
\[
\breve F_n(s) = \int_{(0,\infty)} (1-e^{-sx})F_n(dx)\,,
\qquad \breve F_n(\infty) = m_0(F_n)=\calL F_n(0)\,,
\]
the equivalence of (ii) and (iii) is immediate. Now
the equivalence of (iii) and (iv) follows from 
an extension of Dini's theorem due to P\'olya
\cite[Part II, Problem 127]{PolyaSzego},
because $\breve F_n$ and $\breve F$ are continuous and monotone
on the compact interval $[0,\infty]$.
\end{proof}

{\sl Complete monotonicity.}
Our analysis of the equilibria of Model C relies on a striking result from 
the theory of so-called  \textit{complete} Bernstein functions, 
as developed in \cite[Chap. 6]{Schilling_etal_Bernstein}:
\begin{theorem} \label{thm:CBF}
The following are equivalent.
\begin{itemize}
\item[(i)] The L\'evy measure $F$ in \qref{def:Btransform}
has a completely monotone density $g$, so that
\begin{equation}\label{eq:CBF}
U(s) = a_0s+a_\infty+\int_{(0,\infty)} (1-e^{-sx}) g(x)\,dx\,, \quad s\in(0,\infty).
\end{equation}
\item[(ii)] $U$ is a Bernstein function that admits a holomorphic extension to the cut plane
$\C\setminus(-\infty,0]$ satisfying $(\im s)\im U(s) \ge0$. 
\end{itemize}
\end{theorem}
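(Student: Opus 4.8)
The plan is to prove $(i)\Rightarrow(ii)$ by a direct transform computation and $(ii)\Rightarrow(i)$ via the Nevanlinna--Herglotz representation of Pick functions.

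\emph{Direction $(i)\Rightarrow(ii)$.} By Bernstein's theorem I would write $g(x)=\int_{[0,\infty)}e^{-xt}\,\sigma(dt)$; the Lévy condition $\int_{(0,\infty)}(1\wedge x)g(x)\,dx<\infty$ forces $\sigma(\{0\})=0$, and moreover $\int_{(0,\infty)}\frac{\sigma(dt)}{t(1+t)}=\int_0^\infty(1-e^{-x})g(x)\,dx<\infty$. Inserting this into \eqref{eq:CBF}, exchanging integrals by Tonelli, and using $\int_0^\infty(1-e^{-sx})e^{-tx}\,dx=\frac{s}{t(s+t)}$ gives
\[
U(s)=a_0 s+a_\infty+\int_{(0,\infty)}\frac{s}{t(s+t)}\,\sigma(dt).
\]
For each $t>0$ the integrand is holomorphic on $\C\setminus(-\infty,0]$ (its only pole is at $s=-t$), and since $\frac{1}{t|s+t|}$ is dominated by a constant multiple of $\frac1{t(1+t)}$ on compact subsets of $\C\setminus(-\infty,0]$, a routine dominated-convergence/Morera argument shows $U$ is holomorphic there. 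Finally $\im\frac{s}{t(s+t)}=\im s/|s+t|^2$, so $\im U(s)=\bigl(a_0+\int_{(0,\infty)}|s+t|^{-2}\sigma(dt)\bigr)\im s$ has the same sign as $\im s$, i.e. $(\im s)\,\im U(s)\ge0$; and $U$ is Bernstein by hypothesis.

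\emph{Direction $(ii)\Rightarrow(i)$.} Restricting $U$ to $\mathbb H=\{\im s>0\}$, the sign condition gives $\im U\ge0$ there, so $U|_{\mathbb H}$ is a Pick function and admits a Nevanlinna--Herglotz representation $U(s)=c+bs+\int_{\R}\bigl(\frac1{t-s}-\frac{t}{1+t^2}\bigr)\mu(dt)$ with $b\ge0$, $\mu\ge0$, $\int_{\R}(1+t^2)^{-1}\mu(dt)<\infty$. Because $U$ extends holomorphically across $(0,\infty)$ and is real there, Stieltjes inversion shows $\mu$ vanishes on $(0,\infty)$, so it is carried by $(-\infty,0]$; and an atom at $0$ would contribute a term $-\mu(\{0\})/s\to-\infty$ as $s\downarrow0$, contradicting finiteness of $U(0^+)=a_\infty$ (cf.\ \eqref{eq:a0ainfty}). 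Substituting $t=-u$ therefore yields a measure $\nu$ on $(0,\infty)$ with $\int_{(0,\infty)}(1+u^2)^{-1}\nu(du)<\infty$ and
\[
U(s)=c+bs+\int_{(0,\infty)}\Bigl(\frac{u}{1+u^2}-\frac{1}{s+u}\Bigr)\nu(du).
\]
Writing $\frac1{1+u}-\frac1{s+u}=-\frac{1-s}{(1+u)(s+u)}$ and letting $s\downarrow0$, monotone convergence shows finiteness of $U(0^+)$ forces the key bound $\int_{(0,\infty)}\frac{\nu(du)}{u(1+u)}<\infty$. Now use the elementary identity $\frac{u}{1+u^2}-\frac1{s+u}=-\frac1{u(1+u^2)}+\frac{s}{u(s+u)}$ together with $\frac{s}{u(s+u)}=\int_0^\infty(1-e^{-sx})e^{-ux}\,dx$ and apply Tonelli to obtain
\[
U(s)=c_0+bs+\int_0^\infty(1-e^{-sx})\,g(x)\,dx,\qquad g(x):=\int_{(0,\infty)}e^{-ux}\,\nu(du),
\]
where $c_0:=c-\int_{(0,\infty)}\frac{\nu(du)}{u(1+u^2)}$ is finite by the bound just proved, $g=\calL\nu$ is completely monotone by Bernstein's theorem, and $\int_0^\infty(1\wedge x)g(x)\,dx\asymp\int_{(0,\infty)}\frac{\nu(du)}{u(1+u)}<\infty$. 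Sending $s\to0^+$ and $s\to\infty$ and comparing with \eqref{eq:a0ainfty} identifies $c_0=a_\infty$ and $b=a_0$; thus $U$ has the form \eqref{def:Btransform} with Lévy measure $g(x)\,dx$, and by uniqueness of the Lévy triple this is exactly $F$, which therefore has the completely monotone density $g$.

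I expect the main obstacle to be the analytic core of the second implication: legitimately invoking the Nevanlinna--Herglotz representation and then establishing that its representing measure is concentrated on $(-\infty,0]$ with no atom at the origin, together with the bookkeeping needed to verify the integrability bound so that Tonelli can be applied; after that, the remaining steps reduce to the elementary partial-fraction identity above. (Alternatively one could simply quote \cite[Ch.~6]{Schilling_etal_Bernstein}, but the sketch above is essentially self-contained given Bernstein's theorem and the classical Pick/Herglotz representation.)
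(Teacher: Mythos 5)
The paper offers no proof of this statement: Theorem~\ref{thm:CBF} is quoted verbatim from the theory of complete Bernstein functions in \cite[Chap.~6]{Schilling_etal_Bernstein}, so there is no in-paper argument to compare against. Your sketch is a correct, essentially self-contained reconstruction of the standard textbook proof, and the route you take (Bernstein's theorem plus the kernel identity $\int_0^\infty(1-e^{-sx})e^{-tx}\,dx=\tfrac{s}{t(s+t)}$ for $(i)\Rightarrow(ii)$; Nevanlinna--Herglotz representation, Stieltjes inversion, and the partial-fraction identity $\tfrac{u}{1+u^2}-\tfrac{1}{s+u}=-\tfrac{1}{u(1+u^2)}+\tfrac{s}{u(s+u)}$ for $(ii)\Rightarrow(i)$) is the same one used in the reference. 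I checked the algebraic identities and the integrability bookkeeping (in particular $\int(1\wedge x)g(x)\,dx\asymp\int\frac{\nu(du)}{u(1+u)}$ and the domination $\frac{1}{u(1+u^2)}\le\frac{2}{u(1+u)}$ needed to make $c_0$ finite); they are all correct. Two small points to tighten: in the no-atom-at-zero step you should observe that the remaining terms of the Herglotz representation are bounded above as $s\downarrow0$ (for $t<0$ the kernel $\frac{1}{t-s}$ is decreasing as $s\downarrow0$), so the $-\mu(\{0\})/s$ term cannot be compensated and genuinely forces $U(s)\to-\infty$, contradicting $U\ge0$; and you should dispose of the degenerate case $\im U\equiv0$ on the upper half-plane (then $U$ is constant, $F=0$, and (i) holds trivially) before invoking the Pick representation. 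With those remarks the argument is complete.
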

In complex function theory,
a function holomorphic on the upper half of the complex plane that leaves
it invariant is called a \textit{Pick function} or \textit{Nevalinna-Pick function}.
Condition (ii) of the theorem above says simply that $U$ is a Pick function
analytic and nonnegative on $(0,\infty)$. 
Such functions are called \textit{complete Bernstein functions} in 
\cite{Schilling_etal_Bernstein}.

For our analysis of Model D,  we will make use of an analogous criterion 
recently developed regarding sequences $(c_i)_{i\ge0}$. 
Such a sequence is called completely monotone
if all its backward differences are nonnegative:
\begin{equation}
(I-S)^k c_j \ge 0 \quad\mbox{for all $j,k\ge0$,}
\end{equation}
where $S$ denotes the backshift operator, $Sc_j=c_{j+1}$. 
The following result follows immediately from Corollary 1 in \cite{LP2016}.
\begin{theorem}\label{thm:Pick}
Let $c=(c_j)_{j\ge0}$ be a real sequence with generating function
\begin{equation}\label{d:Gen}
G(z) = \sum_{j=0}^\infty c_j z^j \ .
\end{equation}
Let $\lambda>0$.  Then the following are equivalent.
\begin{itemize}
\item[(i)] The sequence $(c_j\lambda^j)$ is completely monotone.
\item[(ii)] $G$ is a Pick function that is analytic and nonnegative on $(-\infty,\lambda)$.
\end{itemize}
\end{theorem}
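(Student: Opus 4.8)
The plan is to prove the equivalence by passing through Hausdorff's classical solution of the little moment problem. By Hausdorff's theorem, a real sequence $(d_j)_{j\ge0}$ is completely monotone exactly when $d_j=\int_{[0,1]}u^j\,\mu(du)$ for a finite nonnegative Borel measure $\mu$ on $[0,1]$; applying this with $d_j=c_j\lambda^j$ and rescaling $u\mapsto u/\lambda$, condition (i) is equivalent to the existence of a finite nonnegative measure $\nu$ on $[0,1/\lambda]$ with
\begin{equation}\label{eq:momrep}
c_j=\int_{[0,1/\lambda]}s^j\,\nu(ds)\qquad(j\ge0).
\end{equation}
So it suffices to show that (ii) is equivalent to \qref{eq:momrep}. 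The bridge is the elementary identity $\sum_{j\ge0}(sz)^j=(1-sz)\inv$: summing the geometric series inside \qref{eq:momrep} (Tonelli's theorem, $0<z<\lambda$, $sz\le z/\lambda<1$) shows that \qref{eq:momrep} holds if and only if
\begin{equation}\label{eq:Gstielt}
G(z)=\int_{[0,1/\lambda]}\frac{\nu(ds)}{1-sz},
\end{equation}
first for $z\in(0,\lambda)$ and then, by the identity theorem, as an analytic continuation to $\C\setminus[\lambda,\infty)$.

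From \qref{eq:Gstielt} the implication \qref{eq:momrep}$\Rightarrow$(ii) is immediate: for fixed $s\in[0,1/\lambda]$ the M\"obius map $z\mapsto(1-sz)\inv$ carries $\HH$ into $\overline{\HH}$, since $\im(1-sz)\inv=s\,(\im z)/\abs{1-sz}^2\ge0$, so the $\nu$-average $G$ is a Pick function; and for real $x<\lambda\le 1/s$ every integrand is positive and real-analytic, so $G$ is analytic and nonnegative on $(-\infty,\lambda)$.

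For the reverse implication (ii)$\Rightarrow$\qref{eq:momrep}, note first that a Pick function is holomorphic off $\R$, so by the Schwarz reflection principle analyticity on $(-\infty,\lambda)$ upgrades to holomorphy on $\C\setminus[\lambda,\infty)$, and the Maclaurin series $\sum c_j z^j$ converges on $\abs z<\lambda$. I would then invoke the Herglotz--Nevanlinna representation of a Pick function holomorphic at $0$,
\[
G(z)=c_0+\beta z+\int\Bigl(\frac1{t-z}-\frac1t\Bigr)\rho(dt),\qquad \beta\ge0,\ \rho\ge0,
\]
in which analyticity on $(-\infty,\lambda)$ forces $\rho$ to be supported on $[\lambda,\infty)$. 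Examining $G$ on the negative axis is the key point: for $x<0$ the quantity $\tfrac1t-\tfrac1{t-x}$ increases monotonically to $\tfrac1t$ as $x\to-\infty$, so monotone convergence shows that $\int t\inv\rho(dt)=\infty$ would force $G(x)\to-\infty$, contradicting $G\ge0$; hence $\int_{[\lambda,\infty)}t\inv\rho(dt)<\infty$. Granting this, $G(x)\to c_0+\beta x-\int t\inv\rho(dt)$ as $x\to-\infty$, and $G\ge0$ now yields $\beta=0$ and $c_0\ge\int t\inv\rho(dt)$. Expanding $\tfrac1{t-z}-\tfrac1t=\sum_{j\ge1}z^j t^{-j-1}$ for $\abs z<\lambda$ and matching Maclaurin coefficients gives $c_j=\int t^{-j-1}\rho(dt)$ for $j\ge1$. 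Pushing $\rho$ forward by $t\mapsto1/t$ to a measure $\tilde\rho$ on $(0,1/\lambda]$ and taking $\nu(ds)=s\,\tilde\rho(ds)$ on $(0,1/\lambda]$ together with an atom of mass $c_0-\int t\inv\rho(dt)\ge0$ at the origin produces a finite nonnegative measure on $[0,1/\lambda]$ satisfying \qref{eq:momrep}.

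The part I expect to be the real obstacle is this step on the negative axis: squeezing out of the Pick property together with nonnegativity on the half-line $(-\infty,\lambda)$ both the vanishing of the linear term $\beta$ and---more delicately---enough decay of the Herglotz measure $\rho$ at infinity to ensure $\int t\inv\rho(dt)<\infty$, which is precisely what makes the rescaled coefficients $(c_j\lambda^j)$ the moments of a genuinely \emph{finite} measure rather than a merely locally finite one. The rest is bookkeeping. (As indicated above, this equivalence is exactly Corollary~1 of \cite{LP2016}, which one may simply cite.)
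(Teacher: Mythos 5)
Your proof is correct. Note that the paper itself offers no argument for this theorem: it simply states that the result ``follows immediately from Corollary 1 in \cite{LP2016}'', so there is no in-paper proof to compare against, and what you have written is in effect a self-contained reconstruction of that cited corollary. Your route --- Hausdorff's solution of the little moment problem to convert (i) into the representation $c_j=\int_{[0,1/\lambda]}s^j\,\nu(ds)$, the geometric-series identity to convert that into the Cauchy--Stieltjes form $G(z)=\int(1-sz)\inv\nu(ds)$, and the Herglotz--Nevanlinna representation plus Stieltjes inversion for the converse --- is sound at every step. You also correctly isolated the genuinely delicate point: extracting $\beta=0$ and $\int_{[\lambda,\infty)}t\inv\rho(dt)<\infty$ from nonnegativity of $G$ on the negative half-line via monotone convergence as $x\to-\infty$, which is exactly what guarantees the representing measure $\nu$ is finite (so that Hausdorff's theorem applies) and lets the possible defect $c_0-\int t\inv\rho(dt)\ge0$ be deposited as an atom at $s=0$ without disturbing the moments of order $j\ge1$. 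The only cosmetic caveat is that your normalization $G(z)=c_0+\beta z+\int\bigl(\tfrac1{t-z}-\tfrac1t\bigr)\rho(dt)$ of the Nevanlinna formula is legitimate precisely because $\rho$ is supported in $[\lambda,\infty)$ with $\lambda>0$, so the correction term $\tfrac{t}{1+t^2}-\tfrac1t$ is $\rho$-integrable; it would be worth saying so explicitly. Citing \cite[Cor.~1]{LP2016}, as the paper does, remains the shortest admissible proof.
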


\vfil\pagebreak

\part{Analysis of Model C}
\myrunningheads

\section{Equations for the continuous-size model}

In this part of the paper we analyze the dynamics of Model C,
requiring $A=B=2$ as discussed above.
 Weak solutions of the model are then required to satisfy
 (the time-integrated version of) the equation
\begin{equation}\label{eq:modelC} 
\begin{split}
  \frac{d}{dt} \int_{{\mathbb R}_+} 
\varphi(x) \, F_t(dx) &=  \int_{{\mathbb R}_+^2} 
\big( \varphi (x+y) - \varphi(x) - \varphi(y) \big) 
 F_t(dx) \, F_t(dy) 
\\  &\qquad 
-  \int_{{\mathbb R}_+} 
\frac1x \int_0^x \big( \varphi (x) - 2\varphi(y) \big) \, dy 
\,F_t(dx) ,
\end{split}\end{equation}
for all continuous functions $\varphi$ on $[0,\infty]$. 

In successive sections to follow, we study the existence and uniqueness of weak solutions, 
characterize the equilibrium solutions, and study the long-time behavior of solutions
for both finite and infinite total population size. 

\textit{Bernstein transform.} 
The results will be derived through study of the equation corresponding
to \qref{eq:modelC} for the Bernstein transform 
\begin{equation}
\uu(s,t)=\breve F_t(s) = \int_{\rplus} (1-e^{-sx})\,F_t(dx)\,.
\end{equation} 
We obtain the evolution equation for $\breve F_t$ by taking 
$\varphi_s(x)=1-e^{-sx}=\varphi_1(sx)$ as test function 
 in \qref{eq:modelC} 
and using the identities
\begin{equation}
-\varphi_s(x)\varphi_s(y)= \varphi_s(x+y)-\varphi_s(x)-\varphi_s(y)\,,
\end{equation}
\begin{eqnarray*}
\frac{1}{x} \int_0^x \varphi_1(sy) \, dy 
&=& 1 - \frac{1}{sx} \varphi_1(sx) =
\frac{1}{s} \int_0^s \varphi_1(r x) \, dr \,.
\end{eqnarray*}
Then for any weak solution $F_t$, the Bernstein transform must satisfy
\begin{equation}
\boxed{
\frac{\D\uu}{\D t}(s,t) = -\uu^2 -\uu + \frac2s \int_0^s \uu(r,t)\,dr\,.
}
\label{eqC:Bernstein}
\end{equation}
This equation is not as simple as in the case of pure coagulation studied
in \cite{MP2004}, without linear terms, 
but it turns out to be quite amenable to analysis. 

Eq.~\qref{eqC:Bernstein} has a dilational symmetry that 
correspondings to the symmetry that appears in \qref{scaleC:F}.
Namely, if $\hat\uu(s,t)$ is any solution of \qref{eqC:Bernstein}, then
\begin{equation}\label{scaleC:U}
U(s,t) = \hat U(s/L,t)
\end{equation}
is another solution, for any constant $L>0$. 

Furthermore, the zeroth moment of any solution satisfies a logistic equation:
By taking $\varphi\equiv1$ in \qref{eq:modelC}, we find that 
\begin{equation}\label{def:m0}
m_0(F_t)= \int_{(0,\infty)}F_t(dx)
\end{equation}
satisfies
\begin{equation}\label{eq:m0evolve}
\frac{d}{dt} m_0(F_t) = -m_0(F_t)^2 + m_0(F_t)\,.
\end{equation}

\section{Equilibrium profiles for Model C} \label{sec:equilibriumC}

We begin the analysis of Model C by characterizing its equilibrium solutions.
Due to \qref{eq:m0evolve}, 
any non-zero steady-state solution $F_{\rm eq}(dx)$ of Model C must have 
 zeroth moment 
 \begin{equation}
 m_0(F_{\rm eq})=1\ .
 \end{equation}
In this section, we prove the following theorem. 

\begin{theorem}\label{thm:cont_CM}
There is universal smooth profile $f_\star\colon\rplus\to\rplus$,
such that 
every non-zero steady-state solution $F_{\rm eq}$ of Model C has 
a smooth density, taking the form $F_{\rm eq}(dx)=\feq(x)\,dx$, where
\begin{equation}\label{eqC:feqscale}
\feq(x) = \frac{1}{\mu} f_\star\left(\frac x{\mu}\right) \,,
\end{equation}
with first moment
\[
\mu = m_1(\Feq) = \int_\rplus x\,\feq(x)\,dx\in(0,\infty)\,.
\]
The universal profile $f_\star$ can be written as
\begin{equation}
    f_\star(x) = \gamma_\star(x) \, e^{- \frac{4}{27} x}, 
\label{eq:cont_CM_2}
\end{equation}
where $\gamma_\star$ is a completely monotone function. Furthermore, $\gamma_\star$ has the following asymptotic behavior:
\begin{eqnarray}
    \gamma_\star(x) &\sim& \phantom{\frac98}\frac{x^{-2/3}}{ \Gamma({1}/{3})} \, , \qquad \mbox{when} \quad x \to 0, 
\label{eq:cont_CM_3} \\
\gamma_\star(x) &\sim& \frac98 \frac{x^{-3/2}}{ \Gamma(1/2)} \, , \qquad \mbox{when} \quad x \to \infty,  
\label{eq:cont_CM_4} 
\end{eqnarray}
and the profile satisfies 
\begin{equation}
1 = \int_0^\infty f_\star(x)\,dx 
= \int_0^\infty x f_\star(x)\,dx
= \frac16 \int_0^\infty x^2 f_\star(x)\,dx \,.
\label{eq:cont_CM_4_1}
\end{equation}
\end{theorem}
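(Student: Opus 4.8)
\emph{Overview and the equilibrium equation.} The plan is to work entirely with the Bernstein transform $U=\breve{\Feq}$, solve the stationary equation explicitly, and then read off all the claimed structure from the complete Bernstein function machinery of Theorem~\ref{thm:CBF}. Setting $\D U/\D t=0$ in \qref{eqC:Bernstein}, a nonzero steady state satisfies $sU^2+sU=2\int_0^s U(r)\,dr$ for all $s>0$; since $U$ is a Bernstein function it is smooth on $(0,\infty)$ with $U(0^+)=0$, and by \qref{eq:m0evolve} we have $U(\infty)=m_0(\Feq)=1$. Differentiating in $s$ gives the separable ODE $sU'(2U+1)=U(1-U)$, and integrating $\frac{(2U+1)\,dU}{U(1-U)}=\bigl(\tfrac1U+\tfrac3{1-U}\bigr)dU$ against $ds/s$ yields the implicit algebraic relation $\dfrac{U(s)}{(1-U(s))^3}=\mu s$ for a constant $\mu>0$ (the value $\mu=0$ gives the zero solution, $\mu<0$ is incompatible with $U\ge0$). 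From $U(s)=\mu s+O(s^2)$ at $s=0$ we read off $\mu=m_1(\Feq)$, and the dilation symmetry \qref{scaleC:U} is exactly $U(s)=U_\star(\mu s)$ where $U_\star$ is the solution with $\mu=1$; transforming back then produces \qref{eqC:feqscale} once $F_\star(dx)=f_\star(x)\,dx$ is identified. I would also record the structural identity $1-U_\star(-z)=B_3(z)$, the generating function of the Fuss--Catalan sequence $\bigl(\tfrac1{3n+1}\binom{3n+1}{n}\bigr)$, from which the power series of subsection~\ref{sec:seriesC} follows by Lagrange inversion.

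\emph{$U_\star$ is a complete Bernstein function.} Next I would view $U_\star$ as the branch through $U_\star(0)=0$ of the cubic $U=s(1-U)^3$ and locate its branch points: the system $U=s(1-U)^3$, $1+3s(1-U)^2=0$ forces $U=-\tfrac12$, $s=-\tfrac4{27}$, so $s=-\tfrac4{27}$ is the only finite branch point (a square-root point), $U_\star$ continues to a single-valued analytic function on $\Omega:=\C\setminus(-\infty,-\tfrac4{27}]$, real-valued on $(-\tfrac4{27},\infty)$ with values in $(0,1)$ on $(0,\infty)$. It then remains to verify the Pick property $(\im s)\,\im U_\star(s)\ge0$ on $\Omega$; granting it, Theorem~\ref{thm:CBF}(ii)$\Rightarrow$(i) shows $\Feq$ has a completely monotone Lévy density, i.e.\ $\Feq(dx)=f_\star(x)\,dx$ with $f_\star$ smooth, and likewise every $\feq$ by the scaling. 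An equivalent route I might use instead is to exhibit the Stieltjes representation $f_\star(x)=\int_{[4/27,\infty)}e^{-tx}\,\sigma(dt)$, with $\sigma$ read off from the boundary jump of $1-U_\star$ across the cut, the positivity $\sigma\ge0$ being the real content.

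\emph{Exponential cutoff, asymptotics, moments.} Because $U_\star$ is analytic \emph{exactly} off $(-\infty,-\tfrac4{27}]$, the spectral measure $\sigma$ is supported in $[\tfrac4{27},\infty)$, hence $\gamma_\star(x):=e^{\frac4{27}x}f_\star(x)=\int e^{-(t-4/27)x}\,\sigma(dt)$ is completely monotone, which is \qref{eq:cont_CM_2}. For \qref{eq:cont_CM_3}: from $U/(1-U)^3=s$ one gets $1-U_\star(s)=\calL f_\star(s)\sim s^{-1/3}$ as $s\to\infty$, and Karamata's Tauberian theorem gives $f_\star(x)\sim x^{-2/3}/\Gamma(1/3)$ as $x\to0$, hence the same for $\gamma_\star$ since $e^{-\frac4{27}x}\to1$. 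For \qref{eq:cont_CM_4}: expanding the cubic near the branch point gives $s+\tfrac4{27}=\tfrac{16}{81}(U+\tfrac12)^2+\cdots$, so $1-U_\star(s)=\tfrac32-\tfrac94(s+\tfrac4{27})^{1/2}+\cdots$ near $s=-\tfrac4{27}$, and a Tauberian argument applied to $\calL\gamma_\star(p)=1-U_\star(p-\tfrac4{27})$ at $p=0$ turns the $-\tfrac94 p^{1/2}$ term into $\gamma_\star(x)\sim\tfrac98\,x^{-3/2}/\Gamma(1/2)$ as $x\to\infty$. Finally \qref{eq:cont_CM_4_1} follows from $m_0=U_\star(\infty)=1$ together with the first two Fuss--Catalan coefficients $U_\star(s)=s-3s^2+\cdots$, which give $m_1(f_\star)=1$ and $m_2(f_\star)=6$.

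\emph{Main obstacle.} The crux is the Pick property in the second step: that the distinguished branch of $U=s(1-U)^3$, continued from $(0,\infty)$ into the cut plane, keeps $\im U_\star$ of the correct sign (equivalently, that the jump measure $\sigma$ is nonnegative). This needs a genuine complex-analytic argument --- tracking $\arg U_\star$ by the argument principle along a large contour in the upper half-plane, or identifying by a continuity/degree argument which of the two conjugate cubic roots the continuation selects on the cut --- and it is precisely here that one may instead appeal to the completely-monotone-sequence criterion of Theorem~\ref{thm:Pick} and \cite{LP2016} through the Fuss--Catalan identity $1-U_\star(-z)=B_3(z)$. Everything else --- the branch-point computation, the Tauberian asymptotics, the moment evaluations, and the smoothness and uniqueness-of-form assertions --- is bookkeeping.
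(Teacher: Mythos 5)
Your proposal follows essentially the same route as the paper: integrate the stationary Bernstein equation to the cubic $U_\star/(1-U_\star)^3=\mu s$, obtain the Pick property from the Fuss--Catalan identity $1-U_\star(s)=B_3(-s)$ together with \cite{LP2016} (the paper invokes Lemma~\ref{L:B3} for this rather than the sequence criterion of Theorem~\ref{thm:Pick}), shift by the branch point $-\tfrac4{27}$ to extract the completely monotone prefactor via Theorem~\ref{thm:CBF}, and run the two Tauberian arguments and the moment computation exactly as you describe. The only caveat is at the branch point, where the constant $\tfrac32$ dominates $\calL\gamma_\star(p)$: the paper first differentiates (so that $V'(z)=\int_0^\infty e^{-zx}x\gamma_\star(x)\,dx\sim\tfrac98 z^{-1/2}$) and then notes that a selection-type variant of Karamata's theorem is needed because $x\mapsto x\gamma_\star(x)$ is not known to be monotone --- a refinement your sketch glosses over but which does not change the outcome.
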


\begin{remark}
The profile $f_\star$ is related to the profile $\PhiNew$ 
that appeared in \qref{eq:feqPhi} as follows.
Given any equilibrium density $\feq(x)=f_\star(x/m_1)/m_1$, with $m_2=m_2(\feq)$
we compute $\xav =  {m_2}/{m_1} = 6m_1$, and therefore 
\[
 \feq(x) = \frac 6{\xav} f_\star\left(\frac {6x}{\xav}\right)\,. \]
Thus we recover \qref{eq:feqPhi} and \qref{Phi-ours} with
\begin{equation}\label{Phi-fstar}
    \PhiNew(x) = 6 f_\star(6x)\,, \qquad g(x) = 6\gamma_\star(6x)\,.
\end{equation}
\end{remark}
\begin{remark} We mention that the same profiles as in \qref{eqC:feqscale} determine
equilibrium solutions for coagulation-fragmentation equations with
coagulation and fragmentation rates having the power-law form
\begin{equation}
a(x,y) = x^\alpha y^\alpha\,, \qquad
b(x,y) = (x+y)^{\alpha-1}\,.
\end{equation}
Namely, for these rates, equilibria exist with the form
\begin{equation}\label{eqC:feqscalepow}
\feq(x) = \frac{x^{-\alpha}}{\mu} f_\star\left(\frac x{\mu}\right) \,.
\end{equation}
\end{remark}

\subsection{The Bernstein transform at equilibrium}
The proof of Theorem \ref{thm:cont_CM} starts by determining the possible 
steady solutions of equation \qref{eqC:Bernstein} for the Bernstein transform. 
A time-independent solution $U = \breve F_{\rm eq}$ of \qref{eqC:Bernstein} must satisfy
\begin{eqnarray}
&&\hspace{-1cm}
{U(s)^2 +U(s) = \frac{2}{s}  \int_0^s U(r) \, d r \,.} 
\label{eq:Uint1}
\end{eqnarray}
Differentiating with respect to $s$ and eliminating the integral yields
\begin{equation}
\frac{1}{s}= \frac{(2 U + 1) U'}{U(1-U)} =\frac{U'}{U} + \frac{3U'}{1-U} \,. 
\end{equation}
Because we must have $0<U(s)<1= m_0(F)$ for all $s$,
every relevant solution of this differential equation must satisfy
$U(s)=U_\star(Cs)$ for some positive constant $C$, where
\begin{equation}\label{eq:Usolution}
    \boxed{\frac{U_\star(s)} {(\,1-U_\star(s))^3} = s\,. }
\end{equation}
Because $U(0)=0$ it follows necessarily that 
\begin{equation}
m_1(F_{\rm eq}) = \int_{\rplus} x\,F_{\rm eq}(dx)
=  U'(0^+)  = C  \,.
\end{equation}
Thus, in order to prove the theorem, it is necessary to show that  the 
solution $U_\star$ of \qref{eq:Usolution} is a Bernstein function taking the form
\begin{equation}
U_\star(s) = \breve f_\star(s) = \int_0^\infty (1-e^{-sx}) \gamma_\star(x)e^{-\frac4{27}x}\,dx
\end{equation}
where $\gamma_\star$ has the properties as stated. 
(We note that $-\frac4{27}$ is the minimum value of the function $u\mapsto u/(1-u)^3$, 
at $u=-\frac12$.)


\smallskip
\noindent{\bf Remark.} An explicit expression for the solution of \qref{eq:Usolution}
is given by 
\begin{equation} 
U_\star(s)= 
\frac{1}{\sqrt{s}} \left( \left( \frac{\sqrt{s + s_0} + \sqrt{s}}{2} \right)^{1/3} - \left( \frac{\sqrt{s + s_0} - \sqrt{s}}{2}\right)^{1/3} \right)^3 , 
\label{eq:cont_Niwa_equi_3} 
\end{equation}
with $ s_0 = \frac{4}{27}.$
But we have not managed to derive any significant consequences from this. 
\smallskip

\subsection{Relation to Fuss-Catalan numbers}

The Bernstein transform $U_\star(s)=\breve f_\star(s)$ of the universal equilibrium
profile for Model C turns out to be very closely related to the generating function of the
 {\em Fuss-Catalan numbers} with single parameter $p$,
given by
\begin{equation}
 B_p(z) = \sum_{n=0}^\infty \binom{pn+1}{n}\frac{z^n}{pn+1}\ .
\end{equation}
This function is well-known \cite{Graham,Mlotkowski} to satisfy
\begin{equation}\label{eq:Bp}
B_p(z) = 1 + z B_p(z)^p \ .
\end{equation}
Consequently, as there is a unique real solution to $X=1-sX^3$ for $s>0$,  we find from \qref{eq:Usolution} that
\begin{equation}\label{eqC:B3U}
B_3(-s) = 1- U_\star(s) = \int_0^\infty e^{-sx} f_\star(x)\,dx\,,
\end{equation}
and therefore
\begin{equation}
U_\star(s) = -\sum_{n=1}^\infty \binom{3n+1}{n} \frac{(-s)^n }{3n+1}\ .
\end{equation}
The function $B_3$ has the following properties, established in
\cite[Lemma 3]{LP2016}.
\begin{lemma}\label{L:B3}
The function $B_3$ is a Pick function analytic on $\C\setminus[\frac4{27},\infty)$
and nonnegative on the real interval $(-\infty,\frac4{27})$.
\end{lemma}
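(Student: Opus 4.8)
\medskip
\noindent\textit{Proof strategy.}
The plan is to identify $B_3$ as a branch of the inverse of the rational map
\[
\psi(w) \;=\; \frac{w-1}{w^{3}} \;=\; w^{-2}-w^{-3},
\]
and to read off all three assertions from this. Indeed, the functional equation \qref{eq:Bp} with $p=3$ is exactly the statement $z=\psi\big(B_3(z)\big)$, so $B_3$ is the branch of $\psi^{-1}$ normalized by $B_3(0)=1$. First I would record the elementary facts about the power series: the coefficients $\binom{3n+1}{n}/(3n+1)$ are positive, and by Stirling they grow like $C\,n^{-3/2}(27/4)^{n}$, so the series has radius of convergence $\tfrac4{27}$, defines a real-analytic function on $|z|<\tfrac4{27}$ with $B_3(z)\ge B_3(0)=1$ there for real $z\ge0$, and the real coefficients give the reflection relation $\overline{B_3(\bar z)}=B_3(z)$. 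Also $B_3$ can never vanish where it is defined, since $B_3(z_0)=0$ would force $1=0$ in the functional equation; hence $\psi\circ B_3=\mathrm{id}$ on the whole domain of $B_3$.

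\smallskip
Next I would establish the analytic continuation to $D:=\C\setminus[\tfrac4{27},\infty)$. For fixed $z$ the cubic $p_z(w)=zw^{3}-w+1$ has discriminant (in $w$) equal to $z(4-27z)$, which on the simply connected set $D$ vanishes only at $z=0$. But at $z=0$ one has $p_0(w)=1-w$, whose only root is $w=1$, while the other two roots of $p_z$ run off to infinity (like $\pm z^{-1/2}$) as $z\to0$; so near $0$ they never meet the $B_3$-root and $z=0$ is a removable branch point for that root. Therefore the $B_3$-root stays simple, hence locally analytic, at every point of $D$, and since $D$ is simply connected the monodromy is trivial: $B_3$ extends to a single-valued analytic function on $D$ with $z=\psi(B_3(z))$ throughout. (The genuine branch point $z=\tfrac4{27}$ and the ramification at $\infty$, where all three roots coalesce at $w=0$, both lie on the deleted ray and cause no trouble.)

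\smallskip
The Pick property then follows by a short connectedness argument. Since $\psi$ maps the extended real axis to itself ($w\in\R\setminus\{0\}$ goes to a real value, $0\mapsto\infty$, $\infty\mapsto0$), the open set $\psi^{-1}(\HH)=\{w:\im\psi(w)>0\}$ is disjoint from $\R$, so each of its connected components lies entirely in $\HH$ or entirely in the lower half-plane. For $z\in\HH$ we have $\psi(B_3(z))=z\in\HH$, hence $B_3(\HH)\subseteq\psi^{-1}(\HH)$; since $B_3(\HH)$ is connected it lies in a single component, and evaluating near the origin, $B_3(i\eps)=1+i\eps-3\eps^{2}+\cdots$ lies in $\HH$ for small $\eps>0$, so that component lies in $\HH$. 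Thus $B_3(\HH)\subseteq\HH$. For nonnegativity on $(-\infty,\tfrac4{27})$: on $[0,\tfrac4{27})$ one has $B_3\ge1$ from the positivity of the coefficients; on $(-\infty,0)$, $B_3$ is real, continuous, equals $1$ at $0^-$ and never vanishes, so it stays positive (indeed $B_3-1=zB_3^{3}<0$ there forces $B_3\in(0,1)$).

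\smallskip
The main obstacle is the analytic-continuation step: everything rests on locating the branch points of the algebraic function $zw^{3}-w+1=0$ and checking that the $B_3$-sheet is unramified on all of $\C\setminus[\tfrac4{27},\infty)$ --- in particular, that the vanishing of the discriminant at $z=0$ does not involve the $B_3$-root. The explicit discriminant $z(4-27z)$ together with the asymptotics of the three roots at $z=0$ and $z=\infty$ settles this; once it is in hand, the Pick property and positivity are immediate from the functional equation and connectedness.
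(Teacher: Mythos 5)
Your proof is correct. Note first that the paper itself does not prove Lemma~\ref{L:B3} at all: it simply cites Lemma~3 of \cite{LP2016}, which treats the Fuss--Catalan generating functions $B_p$ for general real $p\ge 1$. Your argument is therefore necessarily a different route --- a self-contained one --- and it leans crucially on the fact that $p=3$ is an integer, so that $B_3$ is an algebraic function: the root of the cubic $p_z(w)=zw^3-w+1$. That reduction is what lets you compute the discriminant $z(4-27z)$ explicitly, locate the only finite branch points at $0$ and $\tfrac4{27}$, check that the ramification at $z=0$ involves only the two roots escaping like $\pm z^{-1/2}$ (not the bounded root near $w=1$), and then invoke simple connectivity of $\C\setminus[\tfrac4{27},\infty)$. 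The Pick property via the connectedness of $B_3(\HH)$ inside $\psi^{-1}(\HH)$, seeded by the expansion $B_3(i\eps)=1+i\eps+O(\eps^2)$, and the positivity on $(-\infty,0)$ via non-vanishing plus continuity, are both clean and complete; the identity $\psi\circ B_3=\mathrm{id}$ is just \qref{eq:Bp} rearranged, as you say. The one step a referee might ask you to expand is the monodromy step: strictly, you must argue that the germ of the bounded root continues along \emph{every} path in the slit plane, which requires observing (as you essentially do) that a root can escape to infinity only as $z\to0$ and that near $z=0$ the bounded root is unique and unramified, so the tracked branch can never switch onto an unbounded one. The trade-off versus the cited reference is the usual one: your argument is more elementary and entirely explicit, but does not generalize to non-integer $p$, where $w\mapsto (w-1)/w^{p}$ is no longer rational and the polynomial-discriminant machinery is unavailable.
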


\subsection{Proof of Theorem \ref{thm:cont_CM}.} 

1. We infer from Lemma~\ref{L:B3} that $U_\star$ is a Pick function analytic on $(-\frac4{27},\infty)$.
Because $B_3(\frac4{27})=\frac32$ due to \qref{eq:Bp} , we have
\begin{equation}
U_\star\left(-\frac4{27}\right) = -\frac12\,.
\end{equation}
We make the change of variables 
\begin{equation} \label{eq:UtoV}
V = U_\star + \frac{1}{2}\,, \qquad z = s + \frac{4}{27}\,.
\end{equation}
Then $V$ is a Pick function analytic and nonnegative on $(0,\infty)$. 
Invoking Theorem~\ref{thm:CBF}, we deduce that $V$ is a Bernstein function
whose L\'evy measure has a completely monotone density $\gamma_\star$. 
Note that $V(0^+)=0$, and from \qref{eq:Usolution} we have 
$U_\star(s)\to1$ as $s\to\infty$, hence
\[
\lim_{z\to\infty} \frac{V(z)}z = \lim_{s\to\infty}\frac{U_\star(s)}s = (1-U_\star(\infty))^3 =0.
\]
Therefore by Theorem~\ref{thm:CBF} and  \qref{eq:a0ainfty}
it follows that
\begin{equation}\label{eq:Vgamma}
    V(z) = \int_0^\infty (1-e^{-zx})\gamma_\star(x)\,dx \,,
\end{equation}

2. We will analyze the asymptotic behavior of $\gamma_\star(x)$ using Tauberian arguments,
starting with the range $x\sim0$. Because $1-U_\star \sim s^{-1/3}$
as $s\to\infty$ due to \qref{eq:Usolution}, we can write
\begin{equation}
    \frac{3}{2} - V(z)  = \int_0^\infty e^{-zx}\gamma_\star(x)\,dx  \sim z^{-1/3} \qquad \mbox{as} \quad z \to \infty. 
\label{eq:cont_CMV2}
\end{equation}
By a result that follows from Karamata's Tauberian theorem \cite[Thm. XIII.5.4]{Feller}, it follows
\[
    \gamma_\star(x) \sim \frac{ x^{-2/3}}{\Gamma(1/3)}
\qquad\mbox{as $x\to0$}.
\]

3. Next we deal with limiting behavior as $x\to\infty$. Transformation of \qref{eq:Usolution}
yields
\begin{eqnarray}
G(V(z)) = z, \qquad G(V) = \frac{16 \, V^2 \, (9 - 2V)}{27 \, (3-2V)^3}. 
\label{eq:cont_CM_11} 
\end{eqnarray}
We notice that 
\begin{eqnarray}
G(0) = G'(0) = 0 \, < \,  G''(0) = 2 \, \left( \frac{4}{9} \right)^2, 
\label{eq:cont_CM_12} 
\end{eqnarray}
and that $G$ is monotone increasing from $[0,\frac32)$ onto $[0,+\infty)$. Denote by $G^{-1}(z)$ the inverse function of $G$, which is monotone increasing from $[0,+\infty)$ onto $[0,\frac32)$. Eq. (\ref{eq:cont_CM_11}) is equivalent to saying  $V(z) = G^{-1}(z)$.

Thanks to (\ref{eq:Usolution}), we can use Taylor's theorem to write 
$$ G(V) 
= \big(\frac 49V\big)^2(1+h(V))^2 , \qquad \mbox{as} \quad V \to 0 , $$
where $h(V)$ is analytic in the neighborhood of $V=0$ and is such that $h(0) = 0$. Now, introducing $\zeta = \sqrt{z}$ where the complex square root is taken with branch cut 
along the interval $(- \infty,0)$, Eq. (\ref{eq:cont_CM_11}) is written
$$ 
\frac49\tilde{V} \, (1+h(\tilde{V})) = \zeta, $$
where $\tilde{V}(\zeta) = V(z)$. This equation shows that $\tilde V$ is an analytic function of $\zeta$ in the neighborhood of $\zeta = 0$, such that  
$$ \tilde{V} (\zeta) = 
\frac94 \zeta + O(\zeta^2), \qquad
 \tilde{V}' (\zeta) = 
\frac94 + O(\zeta).
$$ 
Going back to $V(z)$, 
we find that as $z\to0$,
$V(z)\sim\frac94z^{1/2}$ and 
\begin{equation}\label{eqC:Vz0}
    \int_0^\infty e^{-zx}x\gamma_\star(x)\,dx  =
V'(z)\sim \frac98 z^{-1/2} \,.
\end{equation}
Although the Tauberian theorem cited previously does not
apply directly (because we do not know $x\mapsto x\gamma_\star(x)$ is monotone), 
the selection argument used in the proof of Theorem~XIII.5.4 from~\cite{Feller}
works without change. 
It follows
\[ x\gamma_\star(x) \sim \frac98 \frac{x^{-1/2}}{ \Gamma(1/2)}
\qquad\mbox{as $x\to\infty$}.
\]
This proves \qref{eq:cont_CM_4} and completes the characterization of $\gamma_\star$.

4. Finally, by using \qref{eq:Vgamma} we may express $U_\star$ in the form
\begin{equation} 
    U_\star(s) = \int_0^\infty \Big( 1-e^{-\big( s+\frac{4}{27} \big) x} \Big) \, \gamma_\star(x) \, dx - \frac{1}{2}. 
\label{eq:cont_CM_13}
\end{equation}
Note that thanks to \qref{eq:Vgamma} we have
$$ \int_0^\infty (1- e^{-\frac{4}{27} x} ) \, \gamma_\star(x) \, dx = V(\frac{4}{27}) 
= U_\star(0) + \frac{1}{2} = \frac{1}{2}. $$
Therefore, we can recast (\ref{eq:cont_CM_13}) into:
\begin{eqnarray} 
    U_\star(s) &=& \int_0^\infty \Big( 1-e^{-\big( s+\frac{4}{27} \big) x} \Big) \, \gamma_\star(x) \, dx - \int_0^\infty (1- e^{-\frac{4}{27} x} ) \, \gamma_\star(x) \, dx \nonumber\\
         &=&  \int_0^\infty \big( 1-e^{-s x} \big) \, e^{-\frac{4}{27} x} \, \gamma_\star(x) \, dx.
\label{eq:cont_CM_14}
\end{eqnarray}

By the uniqueness property of Bernstein transforms, this proves that 
$F_{\rm eq}(dx)=f_\star(x)\,dx$ where $f_\star$ is given by
\qref{eq:cont_CM_2} as claimed.
It remains only to discuss the moment 
relations in \qref{eq:cont_CM_4_1}.  We already know 
\[
m_0(f_\star)=U_\star(\infty)=1, \qquad
m_1(f_\star)=U_\star'(0^+)=1.
\]
By multiplying \qref{eq:Usolution} by $(1-U_\star)^3$ and differentiating twice, one finds \[m_2(f_\star)=-U_\star''(0^+)=6. \]
This finishes the proof of Theorem~\ref{thm:cont_CM}.
\endproof

\subsection{Series expansion for the equilibrium profile}
\label{sec:seriesC}

Start with the equation \qref{eq:Usolution}
and change variables via 
\begin{equation}
s=z^{-3},\qquad W(z) =1-U_\star(s) = \int_0^\infty e^{-sx}f_\star(x)\,dx\,,
\end{equation}  
to obtain 
\[
\frac{W}{\phi(W)} = z \,, \qquad \phi(W) = (1-W)^{1/3}\,.
\]
By the Lagrange inversion formula (see \cite[p.~128-133]{WhittakerWatson} and \cite{Henrici}), we find that
\begin{equation}
W = \sum_{n=1}^\infty a_n z^n = \sum_{n=1}^\infty a_n s^{-n/3}, 
\label{W-expand}
\end{equation}
where $na_n$
is the coefficient of $w^{n-1}$ in the (binomial) series expansion of $\phi(w)^n$. 
Thus we find
\begin{equation}
a_n = \frac{ (-1)^{n-1}}n \binom{n/3}{n-1} 
= \frac{(-1)^{n-1}}{3(n-1)!} \frac{\Gamma(n/3)}{\Gamma(2-2n/3)}. 
\end{equation}
Term-by-term Laplace inversion using 
$\int_0^\infty e^{-sx}x^{p-1}\,dx = \Gamma(p) s^{-p}$ 
with $p=n/3$ yields
\begin{equation}
\boxed{ f_\star(x) 
= \frac {x^{-2/3}}{3} \sum_{n=0}^\infty 
\frac{(-1)^n}{\Gamma(\frac43-\frac23 n)} \frac{x^{n/3}}{n!} . 
}
\end{equation}
Every third term of this series vanishes because of the poles of
the Gamma function at negative integers.
Replacing $n$ by $3k$, $3k+1$, $3k+2$ respectively yields zero 
in the last case, and the series reduces to 
\begin{equation}
f_\star(x) = \frac {x^{-2/3}}{3} 
\sum_{k=0}^\infty \left(
 \frac{(-1)^k}{\Gamma(\frac43-2k)}\frac{x^k}{(3k)!}
+ \frac{(-1)^{k+1}}{\Gamma(\frac23-2k)} \frac{x^{k+1/3}}{ (3k+1)!}
\right).
\end{equation}

\setcounter{equation}{0}
\section{Global existence and mass conservation} \label{sec:existenceC}

Next, we deal with the initial-value problem for Model C.
Although this can be studied using rather standard techniques from kinetic theory, 
we find it convenient to study this problem using Bernstein function theory.

We require solutions take values in the space $\calM_+(0,\infty)$, which we recall to be the 
space of non-negative finite measures on $(0,\infty)$.
The main result of this section is the following.

\begin{theorem}\label{thm:EU-modelC}
Given any $\Finit\in \calM_+(0,\infty)$, there is a unique narrowly continuous
map $t\mapsto F_t\in \calM_+(0,\infty)$, $t\in(0,\infty)$ that satisfies \qref{eq:modelC}.
Furthermore, 
\begin{itemize}
\item[(i)] If the first moment $m_{1}(\Finit)<\infty$, then
\begin{equation}\label{eq:m1const}
m_1(F_t) = m_1(\Finit)
 \end{equation}
 for all $t\in[0,\infty)$, and $t\mapsto x\,F_t(dx)\in\calM_+(0,\infty)$ is narrowly continuous.
\item[(ii)] If $\Finit(x)= \finit(x)\,dx$ where $\finit$ is a completely monotone function, then for any $t \ge 0$, $F_{t}(dx)= f_{t}(x)\,dx$
where  $f_{t}$ is also a completely monotone function.
\end{itemize}
\end{theorem}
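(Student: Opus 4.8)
The plan is to transfer the problem to the Bernstein transform side, where the evolution equation \qref{eqC:Bernstein} is a quasilinear nonlocal PDE amenable to fixed-point analysis, and then translate properties of $U(s,t)$ back to properties of the measure $F_t$ via the correspondence theorems of Section 3 (especially Theorem~\ref{th:Btopology} and Proposition~\ref{prop:narrow}). First I would note that, by Proposition~\ref{prop:narrow}, a narrowly continuous map $t\mapsto F_t\in\calM_+(0,\infty)$ corresponds exactly to a family $U(s,t)=\breve F_t(s)$ that is, for each fixed $t$, a Bernstein function of the special form $\breve F$ for a finite measure $F$ on $(0,\infty)$ — equivalently, $U(\cdot,t)$ is Bernstein with $a_0=0$, and $U(\infty,t)=m_0(F_t)<\infty$ — and such that $t\mapsto U(s,t)$ is continuous (uniformly in $s$, by part (iv) of the Proposition). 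So the existence/uniqueness assertion reduces to: given $U_{\mathrm{in}}(s)=\breve\Finit(s)$, show \qref{eqC:Bernstein} has a unique such solution.

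For that I would set up \qref{eqC:Bernstein} as an integral equation in $t$, writing $\partial_t U = -U - U^2 + \frac2s\int_0^s U(r,t)\,dr =: \mathcal{N}[U](s,t)$, and solve by Picard iteration / Banach fixed point in the Banach space of bounded functions of $s\in(0,\infty)$ with the sup norm, on a short time interval $[0,T]$. The key structural facts to check are: (a) the map $U\mapsto \frac2s\int_0^s U(r)\,dr$ preserves the cone of Bernstein functions with $a_0=0$ — indeed if $U=\breve F$ then $\frac1s\int_0^s\breve F(r)\,dr = \int_{(0,\infty)}(1 - \frac{1-e^{-sx}}{sx})F(dx)$, which is again of the form $\breve{\tilde F}$ with $\tilde F$ the pushforward-type measure, and its total mass is bounded by $m_0(F)$; (b) the quadratic term $-U^2$ together with the others keeps us inside a suitable invariant set (a ball $\|U\|_\infty\le R$ intersected with the Bernstein cone). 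Local existence of a Bernstein-valued solution then follows because the Bernstein cone is closed under pointwise limits. For global existence one uses the logistic equation \qref{eq:m0evolve} for $m_0(F_t)=U(\infty,t)$, which is bounded a priori (it converges to $1$ or stays at $0$), to get an a priori bound $\|U(\cdot,t)\|_\infty = m_0(F_t)\le\max(m_0(\Finit),1)$ uniform in $t$, preventing blow-up and allowing continuation to all $t>0$. Uniqueness follows from a Gronwall estimate on the difference of two solutions in sup norm, using that $\mathcal{N}$ is locally Lipschitz on bounded sets (the averaging operator is a contraction in sup norm with constant $1$, and $U^2$ is Lipschitz on $\|U\|_\infty\le R$).

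For part (i): if $m_1(\Finit)<\infty$ then $U_{\mathrm{in}}'(0^+)=m_1(\Finit)<\infty$. I would differentiate \qref{eqC:Bernstein} in $s$ at $s=0^+$, or more robustly track the quantity $m_1(F_t)=\lim_{s\to0}U(s,t)/s$; formally $\frac{d}{dt}m_1(F_t)=0$ because taking $\varphi(x)=x$ in \qref{eq:modelC} gives $\varphi(x+y)-\varphi(x)-\varphi(y)=0$ and $\frac1x\int_0^x(x-2y)\,dy = 0$. To make this rigorous one approximates $\varphi(x)=x$ by the bounded test functions $\varphi_s(x)=1-e^{-sx}$, obtains a differential inequality controlling $m_1(F_t)$, and shows $m_1$ stays finite and constant; narrow continuity of $x\,F_t(dx)$ then follows from narrow continuity of $F_t$ plus conservation of the first moment (the total mass of $x\,F_t(dx)$ being constant, one upgrades vague to narrow continuity as in the Klenke criterion cited after \qref{lim:vlim}). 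For part (ii): if $\finit$ is completely monotone, then by Theorem~\ref{thm:CBF} the initial Bernstein function $U_{\mathrm{in}}$ is a complete Bernstein function, i.e.\ (condition (ii) there) a Pick function analytic and nonnegative on $(0,\infty)$. I would show this property is preserved by the flow: run the same fixed-point iteration but in the smaller closed cone of complete Bernstein functions (Pick functions nonnegative on $(0,\infty)$), checking that each operation in $\mathcal{N}$ preserves it — the averaging operator $U\mapsto\frac2s\int_0^s U\,dr$ maps complete Bernstein to complete Bernstein (it corresponds to $F\mapsto\tilde F$ where $\tilde F$ still has completely monotone density, since $g(x)\mapsto \frac1x\int_x^\infty g$ preserves complete monotonicity), multiplication $U\mapsto U^2$ preserves complete Bernstein functions (a known fact from \cite{Schilling_etal_Bernstein}, since products of complete Bernstein functions... ) — here I would instead argue via the Pick-function characterization directly, or fall back on showing $\im U(s,t)\ge0$ for $\im s>0$ is propagated by the equation, and complete monotonicity of the density then follows from Theorem~\ref{thm:CBF}. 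The main obstacle I anticipate is part (ii): verifying that the nonlinear term $-U^2$ together with the nonlocal averaging term genuinely preserves the complete-monotonicity/Pick structure along the flow — the cleanest route is probably to show the iteration map preserves the closed convex cone of complete Bernstein functions and invoke closedness under pointwise limits, but pinning down that $U\mapsto U^2$ and the $-U$ and averaging terms jointly keep $\partial_t U$ pointing into the cone (a tangency/invariance condition) requires care, and is the step where an entropy-free argument is least automatic.
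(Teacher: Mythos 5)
Your overall strategy --- pass to the Bernstein transform, show the evolution preserves the cone of Bernstein (resp.\ complete Bernstein) functions, use closedness of these cones under pointwise limits, bound $m_0(F_t)=U(\infty,t)$ by the logistic equation for global existence, and close with a Gronwall estimate for uniqueness --- is exactly the paper's strategy, and your treatments of part (i) and of uniqueness are essentially the ones used there. But there is a genuine gap at the central step: the continuous-time Picard iteration does \emph{not} preserve the Bernstein cone. The iterates
$U^{(k+1)}(s,t)=U_0(s)+\int_0^t\bigl(-U^{(k)}-(U^{(k)})^2+\frac2s\int_0^sU^{(k)}(r,\tau)\,dr\bigr)\,d\tau$
involve the terms $-U$ and $-U^2$, which point \emph{out} of the cone (the negative of a Bernstein function is not Bernstein, nor is $-U^2$), so your ``invariant set = ball intersected with the Bernstein cone'' is not actually invariant under the iteration map. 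Without cone-invariance you may still get a fixed point $U(s,t)$ in sup norm, but you then have no way to assert $U(\cdot,t)=\breve F_t$ for a finite measure $F_t$, i.e.\ no measure-valued solution of \qref{eq:modelC} at all. You correctly sense this danger for part (ii) (the ``tangency/invariance condition requires care''), but the same obstruction already blocks the basic existence claim.

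The paper's resolution is the missing idea: replace Picard iteration by an implicit--explicit time discretization. Discretizing the fragmentation gain term explicitly and everything else implicitly, one step of the scheme reads $\hat U_n = U_n + 2\Delta t\int_0^1 U_n(s\tau)\,d\tau$ followed by $U_{n+1}=G_{\Delta t}\inv\circ\hat U_n$ with $G_{\Delta t}(u)=(1+\Delta t)u+\Delta t\,u^2$ and $G_{\Delta t}\inv(v)=-\alpha+\sqrt{\alpha^2+v/\Delta t}$, $\alpha=(1+\Delta t)/(2\Delta t)$. The explicit half is a positive linear combination (cone property by convexity), and the implicit half is composition with the function $G_{\Delta t}\inv$, which is itself Bernstein --- indeed complete Bernstein --- so Proposition~\ref{prop:compose} (and, for part (ii), closedness of complete Bernstein functions under composition) gives cone-invariance for free at each step. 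One then passes to the limit $\Delta t\to0$ by Arzel\`a--Ascoli using the uniform bounds $U_n(s)\le e^{n\Delta t}U_0(s)$ and the concavity-derived derivative bounds, and the limit is Bernstein because the cone is closed under pointwise convergence. If you want to keep a continuous-time construction you would need an independent argument that the \emph{flow} (not the iteration) preserves the cone --- e.g.\ propagating the Pick property $(\im s)\,\im U\ge0$ through the PDE --- which is precisely the step you flag as ``least automatic''; the IMEX splitting is the device that makes it automatic.
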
  
\begin{proof}
We shall construct the solution $F_t$ from an unconditionally 
stable approximation scheme restricted to a discrete set of times $t_n=n\Delta t$. 
We approximate $F_{t_n}$ by measures $F_n$, discretizing the gain term
from fragmentation explicitly, and the rest of the terms implicitly.
We require that for every test function
$\varphi$ continuous on $[0,\infty]$, 
\begin{eqnarray}
&&\hspace{-1.5cm}
\frac 1{\Delta t}\int_{{\mathbb R}_+} \varphi(x) \, (F_{n+1}(dx) - F_{n}(dx))
 \nonumber \\
&&\hspace{-.5cm}
= \int_{{\mathbb R}_+^2} \big( \varphi (x+y) - \varphi(x) - \varphi(y) \big)  \, F_{n+1}(dx) \, F_{n+1}(dy)  \nonumber\\
&&\hspace{-.4cm}
-  \int_{{\mathbb R}_+} \varphi(x)\,F_{n+1}(dx)
+  \int_{{\mathbb R}_+} 
\frac2x \int_0^x \varphi(y) \, dy 
\,F_n(dx) \,.
\label{eq:IEschemeF}
\end{eqnarray}

Taking $\varphi(x)=1-e^{-sx}$ in particular, the above scheme reduces to
the following implicit-explicit scheme for Eq.~\qref{eqC:Bernstein}
for the Bernstein transform $U_n=\breve F_n$: Given $U_n$, 
first compute
\begin{equation}\label{IE-U1}
\hat U_n(s) = U_{n}(s)   +  2\Delta t \int_0^1 U_{n}(s\tau)\, d\tau \,,
\end{equation}
then determine $U_{n+1}(s)$ by solving 
\begin{equation}\label{IE-U2}
(1+\Delta t) U_{n+1}(s) + \Delta t\, U_{n+1}(s)^2 = \hat U_n(s)\,.
\end{equation}

In order to construct a solution to the difference scheme 
\qref{eq:IEschemeF},  we first show by induction that for the solution 
of this scheme in \qref{IE-U1}-\qref{IE-U2}, 
$U_n$ is a Bernstein function for all $n\ge0$. 
Naturally, $U_0=\breve F_{\rm in}$ is Bernstein, 
so suppose $U_n$ is Bernstein for some $n\ge0$. 
Because the set of Bernstein functions is a convex cone that is 
closed under the topology of pointwise convergence,
it is clear that $\hat U_n$ is Bernstein.
Now, the function 
\[
u\mapsto v = G_{\Delta t}(u):= (1+\Delta t)u+\Delta t\,u^2
\]
is bijective on $(0,\infty)$ and its inverse is given by
\begin{equation}\label{eq:Ginv}
u =  G_{\Delta t}\inv(v) = -\alpha + \sqrt{\alpha^2+v/\Delta t},
\qquad \alpha = \frac{1+\Delta t}{2\Delta t}\,.
\end{equation}
This function is Bernstein, and the composition $U_{n+1}=G_{\Delta t}\inv\circ \hat U_n$
is Bernstein, by Proposition~\ref{prop:compose}.

Because $U_n(s)$ is increasing in $s$, it is clear that 
$\hat U_n(s)\le (1+2\Delta t)U_n(s)$,
and therefore that for all $n>0$ and $s\in(0,\infty]$,
\begin{equation}\label{eq:Un-bound}
U_{n}(s) \le \frac{1+2\Delta t}{1+\Delta t} U_{n-1}(s) \le e^{\Delta t} U_{n-1}(s) 
\le e^{n\Delta t} U_0(s)\,.
\end{equation}
Therefore, for each $n\ge0$, we have 
\[
U_{n}(0)=0\,, \qquad
\lim_{s\to\infty} \frac{U_{n}(s)}s =0 \,,
\]\[
U_n(\infty)\le  e^{n\Delta t}  U_0(\infty)=  e^{n\Delta t} m_0(\Finit).
\]
By \qref{eq:a0ainfty}, it follows $U_n=\breve F_n$ for some finite measure
$F_n\in \calM_+(0,\infty)$. 

We note that by the concavity of $U_n$ and the bound \qref{eq:Un-bound},
we have the following uniform bound on derivatives of $U_n$:
 for any positive $\sigma, \tau>0$, as long as $(n+1)\Delta t\le \tau$ and $s\in[\sigma,\infty)$,
\begin{equation}
U_n'(s) \le  \frac{U_n(s)}s \le e^\tau \frac{U_0(s)}s \le e^\tau\frac{U_0(\sigma)}\sigma\,,
\end{equation}
and
\begin{equation}
\frac1{\Delta t} |U_{n+1}(s)-U_n(s)| \le U_{n+1}(s)+U_{n+1}(s)^2+2U_n(s)
\le C(\tau)\,.
\end{equation}
Due to these bounds, the piecewise-linear interpolant in time is Lipschitz continuous,
uniformly for $(s,t)$ in any compact subset of $(0,\infty)\times[0,\infty)$. 
By passing to a subsequence, using the Arzela-Ascoli theorm and a diagonal extraction argument, we obtain a limit $U(s,t)$ continuous 
on $(0,\infty)\times[0,\infty)$ which 
is Bernstein for each fixed $t\ge0$ and satisfies 
the time-integrated form of Eq.~\qref{eqC:Bernstein}:
\begin{equation}\label{eq:uevolint}
\uu(s,t)=\uu_0(s) + \int_0^t \left(
 -\uu^2 -\uu + \frac2s \int_0^s \uu(r,\tau)\,dr\right)\,d\tau\,.
\end{equation}

By consequence, \qref{eqC:Bernstein} holds. 
From \qref{eq:Un-bound}, the function $U(s,t)$ inherits the bounds 
\begin{equation}
U(s,t) \le e^t U_0(s), 
\end{equation}
and we conclude 
\[
U(0,t)=0, \qquad \lim_{s\to\infty} \frac{U(s,t)}s = 0,
\]\[
 U(\infty,t)\le e^t m_0(\Finit).
\]
As previously, by \qref{eq:a0ainfty} we infer that for each $t\ge0$,
$U(s,t)=\breve F_t$ for some finite measure $F_t\in \calM_+(0,\infty)$,
and $U(\infty,t)=m_0(F_t)$.

We may now deduce \qref{eq:m0evolve} by taking $s\to\infty$ in
\qref{eq:uevolint}. (Indeed, because $U(s,t)$ increases as $s\to\infty$
toward $m_0(F_t)$, one finds $s\inv\int_0^s U(r,\tau)\,dr\to m_0(F_\tau)$
for each $\tau>0$.)
Because $t\mapsto \breve F_t(s)$
is continuous for each $s\in[0,\infty]$, we may now invoke 
Proposition~\ref{prop:narrow} to conclude that
 $t\mapsto F_t$ is narrowly continuous. 
 
 At this point, we know that \qref{eq:modelC} holds for each test function
 of the form $\varphi(x)=1-e^{-sx}$, $s\in(0,\infty]$. Linear combinations
 of these functions are dense in the space of continuous functions on 
 $[0,\infty]$. (This follows by using the 
 homeomophism $[0,\infty]\to[0,1]$ given by $x\mapsto e^{-x}$ 
 together with the Weierstrass approximation theorem.)
 Then because $m_0(F_t)$ is uniformly bounded,
 it is clear that one obtains \qref{eq:modelC} for arbitrary
 continuous $\varphi$ on $[0,\infty]$ by approximation.
%
%

It remains to prove the further statements in (i) and (ii) of the Theorem. 
Suppose  $m_1(\Finit)<\infty$. Then we claim that the first moments 
of the sequence $(F_n)$ remain constant:
\begin{equation}\label{eq:m1n}
m_1(F_n) = U_n'(0) = U_0'(0)=  m_1(\Finit)
\end{equation}
for all $n\ge0$. The proof by induction is as follows. 
Differentiating \qref{IE-U1}-\qref{IE-U2} we have
\begin{equation}\label{eq:hatUnp}
\hat U_n'(s) = U_n'(s) + 2\Delta t\int_0^1 U_n'(s\tau)\tau\,d\tau \to 
(1+\Delta t) U_n'(0)
\end{equation}
as $s\to0$, and 
\begin{equation}\label{eq:hatUnpp}
(1+\Delta t+2\Delta t\,U_{n+1}(s)) U_{n+1}'(s) = \hat U_n'(s) \to (1+\Delta t)U_{n+1}'(0).
\end{equation}
The claim follows. 

Next we claim that the first moment of the limit $F_t$ is also constant in time. 
To show this, first note that $U_n(s)/s \le U_n'(0)=m_1(\Finit).$ Taking $\Delta t\to0$ we 
infer $U(s,t)/s\le m_1(\Finit)$, and taking $s\to0$ we get 
\[
m_1(F_t)=\D_sU(0,t)\le m_1(\Finit)\,.
\]
 Next note that because $U_n'(s)$ is decreasing in $s$, 
\eqref{eq:hatUnp}-\eqref{eq:hatUnpp} imply
\begin{equation}\label{eq:Unpp}
(1+\Delta t+2\Delta t\,U_{n+1}(s)) U_{n+1}'(s) \ge (1+\Delta t)U_n'(s).
\end{equation}
Then by the bound \qref{eq:Un-bound}, for $(n+1)\Delta t\le \tau$ we have
\[
\exp(2\Delta t e^\tau U_0(s)) \, U_{n+1}'(s) \ge 
\left(1+2\Delta t\,U_{n+1}(s)\right) U_{n+1}'(s) \ge U_n'(s) \,,
\]
where the last inequality follows by dividing \qref{eq:Unpp} by $1+\Delta t$. Therefore
\begin{equation}
\exp(2\tau e^\tau U_0(s)) \, U_{n}'(s) \ge U_0'(s).
\end{equation}
This inequality persists in the limit, whence we infer by taking $s\to0$ that 
$\D_sU(0,t)\ge \D_sU_0(0)$, meaning 
$m_1(F_t) \ge m_1(\Finit)$.
This proves \qref{eq:m1const} holds.

Because $t\mapsto (x\wedge1)F_t$ is weakly continuous on $[0,\infty]$
by the continuity theorem \ref{th:Btopology},
and $m_1(F_t)$ is bounded, 
the finite measure $x\,F_t(dx)$ is vaguely continuous, 
hence it is narrowly continuous
because $m_1(F_t)$ remains constant in time.

Finally we prove (ii). The hypothesis implies that $U_0$ is a
complete Bernstein function by Theorem~\ref{thm:CBF}. 
Complete Bernstein functions form a convex cone that is closed with respect to pointwise
limits and composition, according to \cite[Cor. 7.6]{Schilling_etal_Bernstein}.

We prove by induction that $U_n$ is a complete Bernstein function for every $n\ge0$.
For if $U_n$ has this property, then clearly $\hat U_n$ does. From the formula
\qref{eq:Ginv}, $G_{\Delta t}\inv$ is a complete Bernstein function, as it is positive 
on $(0,\infty)$, analytic on $\C\setminus(-\infty,0]$ and leaves the upper half
plane invariant. Therefore $U_{n+1}$ is completely Bernstein. 

Passing to the limit
as $\Delta t\to0$, we deduce that $U(\cdot,t)$ is completely Bernstein for each
$t\ge0$. By the representation theorem \ref{thm:CBF}, we deduce that the 
measure $F_t$ has a completely monotone density as stated in (ii).

This completes the proof of Theorem~\ref{thm:EU-modelC}, except for uniqueness.
Uniqueness is proved by a Gronwall argument very similar to that used
below in section~\ref{sec:disc2cont} to study the discrete-to-continuum limit.
We refer to Eqs.~\qref{eqD:omegat}--\qref{eqD:gronwall} and omit further details.
\end{proof}

\setcounter{equation}{0}
\section{Convergence to equilibrium with finite first moment} \label{sec:convergence}
\label{subsec_continuous_CVEQ}

In this section we prove that any solution with finite first moment converges 
to equilibrium in a weak sense. Our main result is the following. 
%
%
%
\begin{theorem}\label{thm:CVEQ1}
Suppose $F_t(dx)$ is any solution of Model C with initial data 
$F_0$ having finite first moment $m_1=m_1(F_0)$. 
Then we have
\begin{equation}\label{lim:F}
F_t(dx) \nto \frac1{m_1} f_\star(x/m_1)\,dx 
\end{equation}
and  
\begin{equation}\label{lim:xF}
x F_t(dx) \nto \frac 1{m_1} x f_\star(x/m_1)\,dx
\end{equation}
narrowly, as $t\to\infty$.
\end{theorem}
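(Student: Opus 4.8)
The plan is to pass to the Bernstein transform $U(s,t)=\breve F_t(s)$, which solves \qref{eqC:Bernstein}, and to prove pointwise convergence $U(\cdot,t)\to U_\star(m_1\,\cdot)$, the transform of the equilibrium with first moment $m_1$. By the dilation symmetry \qref{scaleC:F} we may assume $m_1=1$, so the target is $U_\star$. Since $m_1(F_t)=1$ for all $t$ by Theorem~\ref{thm:EU-modelC}(i) and $m_0(F_t)\to1$ by the logistic equation \qref{eq:m0evolve}, the difference $D(s,t):=U(s,t)-U_\star(s)$ is $o(s)$ as $s\to0$ (the first moments match, so $D'(0,t)=0$) and tends to $m_0(F_t)-1\to0$ as $s\to\infty$; moreover $0\le U(s,t)\le\min(s,m_0(F_t))$ by concavity and the first‑moment bound. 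Subtracting the steady equation \qref{eq:Uint1} from \qref{eqC:Bernstein} gives
\[
\partial_t D = -\big(U+U_\star+1\big)D + P[D],\qquad P[D](s):=\frac2s\int_0^s D(r,t)\,dr ,
\]
a linear‑looking equation with a nonnegative coefficient $U+U_\star+1\ge1$, strictly larger than $1$ away from $s=0$, plus the nonlocal averaging $P$.

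The next step is to introduce a weighted $L^2$ Lyapunov functional $\Lambda(t)=\int_0^\infty D(s,t)^2 s^{q}\,ds$ with $q<-1$. Writing $B(s,t)=\frac1s\int_0^s D(r,t)\,dr$, so that $D=B+sB'$ and $P[D]=2B$, integration by parts gives the exact identities $\int D\,P[D]\,s^q\,ds=(1-q)\|B\|_{s^q}^2$ and $\Lambda=-q\|B\|_{s^q}^2+\|B'\|_{s^{q+2}}^2$, provided the boundary terms vanish — which they do modulo a correction of size $|m_0(F_t)-1|\to0$ coming from $B(\infty)=m_0(F_t)-1$. Hence
\[
\tfrac{d}{dt}\Lambda = -2\!\int(U+U_\star+1)D^2 s^q\,ds + 2(1-q)\|B\|_{s^q}^2
\le 2\|B\|_{s^q}^2 - 2\|B'\|_{s^{q+2}}^2 - 2\!\int(U+U_\star)D^2 s^q\,ds .
\]
The nonlocal term is then absorbed by the weighted Hardy inequality $\|B\|_{s^q}^2\le\frac4{(q+1)^2}\|B'\|_{s^{q+2}}^2$ (valid here since $B(0)=0$); with $q=-3$ this reads $\|B'\|_{s^{-1}}^2\ge\|B\|_{s^{-3}}^2$, so that $\tfrac{d}{dt}\Lambda\le -2\int(U+U_\star)D^2 s^q\,ds\le0$. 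One must also check that $\Lambda(t)<\infty$: this is automatic for $q=-2$ because $D=o(s)$ near $0$, and holds for $q=-3$ once $F_0$ (hence $F_t$) is known to have a moment slightly above $1$, since then $D(s,t)=O(s^{1+\delta})$ near $0$.

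Given the a priori bounds, the orbit $\{U(\cdot,t)\}_{t\ge0}$ is precompact in the topology of pointwise convergence on $(0,\infty)$ (the maps $U(\cdot,t)$ are uniformly Lipschitz in $s$ by concavity with $U'\le U/s\le1$, and equicontinuous in $t$ by \qref{eqC:Bernstein}), so a La~Salle‑type argument applies: on any $\omega$‑limit point the dissipation $\int(U+U_\star)D^2 s^q\,ds$ must vanish, and since $U+U_\star>0$ on $(0,\infty)$ this forces $D\equiv0$; thus $U(\cdot,t)\to U_\star$ pointwise. Together with $U(0,t)=0$ and $U(\infty,t)=m_0(F_t)\to1=U_\star(\infty)$, this gives $\breve F_t(s)\to\breve f_\star(s)$ for every $s\in[0,\infty]$, so Proposition~\ref{prop:narrow} yields the narrow convergence \qref{lim:F}. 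Finally \qref{lim:xF} follows from \qref{lim:F} by a standard measure‑theoretic fact: if $F_t\nto f_\star\,dx$ narrowly and the first moments converge, $m_1(F_t)\to m_1(f_\star)$ — which here is trivial since $m_1(F_t)\equiv m_1$ and $m_1(f_\star)=m_1$ by \qref{eq:cont_CM_4_1} — then $x\,F_t(dx)\nto x\,f_\star(x)\,dx$ narrowly as well.

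I expect the main obstacle to be the quantitative balancing of the nonlocal term against the dissipation near $s=0$: the weight that makes $\Lambda$ finite for arbitrary finite‑first‑moment data ($q=-2$) is not singular enough for the Hardy constant to dominate the averaging, while the weight that closes the estimate cleanly ($q=-3$) presupposes a little extra moment on $F_0$. Bridging this gap — most naturally by first establishing the theorem for data with a super‑linear moment and then extending to general $F_0\in\calM_+(0,\infty)$ with $m_1(F_0)<\infty$ by truncating the tail, applying the result to the truncations, and controlling the error through a continuous‑dependence (Gronwall) estimate for \qref{eqC:Bernstein} — is the delicate point of the argument.
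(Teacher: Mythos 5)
Your route is genuinely different from the paper's: the paper reduces to Proposition~\ref{prop:Ulim} and proves pointwise convergence of $U(s,t)$ to $U_\star(s)$ by a barrier/comparison argument in the variable $w=v-v_\star$ (with $U=s(1-v)$), constructing explicit super- and sub-solutions $v_0(s)\wedge\bar b(t)$ and $(-v_\star(s))\vee\ubar b(t)$ that decay to zero; your proposal replaces this by a weighted-$L^2$ Lyapunov functional plus a Hardy inequality and a La~Salle argument. Your algebra for $\partial_t D$, the identities for $\int D\,P[D]\,s^q\,ds$ and for $\Lambda$ in terms of $\|B\|_{s^q}$ and $\|B'\|_{s^{q+2}}$, and the resulting inequality $\frac{d}{dt}\Lambda\le -2\int(U+U_\star)D^2s^q\,ds$ at $q=-3$ all check out, and the endgame (Proposition~\ref{prop:narrow} plus constancy of $m_1$) matches the paper. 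However, there is a genuine gap exactly where you flag it, and the bridge you propose does not close it. For general $F_0$ with only $m_1(F_0)<\infty$ one merely has $D(s,0)=o(s)$ near $s=0$, so $\Lambda(0)=\int_0^1 D^2s^{-3}\,ds$ can be infinite; finiteness requires a moment of order strictly above $1$. Your plan to recover the general case by truncating the tail of $F_0$ and invoking a Gronwall continuous-dependence estimate for \qref{eqC:Bernstein} cannot work as stated: such estimates (cf.\ \qref{eqD:gronwall}) carry a factor $e^{4t}$, so they do not permit interchanging the truncation limit with $t\to\infty$. A one-sided comparison ($F_0^R\le F_0$ gives $\liminf_{t\to\infty}U(s,t)\ge U_\star(m_1^Rs)$) is salvageable, but the matching upper bound $\limsup_{t\to\infty}U(s,t)\le U_\star(s)$ does not follow from truncation, since majorants of $F_0$ with higher moments need not exist. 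This is precisely the case the paper's comparison argument handles with no extra moment hypothesis.

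Two smaller points you would need to supply to make even the restricted case rigorous. First, the La~Salle step needs $\Lambda$ (or the dissipation) to be continuous along the orbit closure, which requires a \emph{uniform-in-time} bound of the form $|D(s,t)|\le Cs^{1+\delta}$ near $s=0$; this amounts to propagation of a moment of order $1+2\delta$ with a time-uniform bound (true, e.g., $\frac{d}{dt}m_2=2m_1^2-\frac13m_2$ keeps $m_2$ bounded, but this is not proved in the paper and must be added). Alternatively, integrating your dissipation inequality in time and applying a Barbalat-type argument avoids compactness but still needs $\Lambda(0)<\infty$ and a bound on $\frac{d}{dt}\int U_\star D^2s^q\,ds$. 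Second, at $q=-3$ the Hardy constant is exactly $1$, so the averaging term is only neutralized, not dissipated; your conclusion therefore rests entirely on the term $-2\int(U+U_\star)D^2s^q\,ds$, which is fine, but the boundary terms in the two integrations by parts at $s=0$ (which require $B^2s^{-2}\to0$) should be verified from $D=O(s^{1+\delta})$ rather than just asserted.
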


Due to the scaling invariance of Model C,
we may assume $m_1=1$ without loss of generality.
To prove the theorem, the main step is to study Eq.~\qref{eqC:Bernstein} 
satisfied by the Bernstein transform $\uu(s,t)=\breve F_t(s)$, and prove 
\begin{equation}\label{eq:Ulim}
\breve F_t(s) \to \breve f_\star(s) \quad
\mbox{as $t\to\infty$, for all $s\in[0,\infty)$.}
\end{equation} 
We make use of the following
proposition which will also facilitate the treatment of the discrete-size 
Model D later.
\begin{proposition}\label{prop:Ulim} Let $\bar s>0$, and 
suppose $U(s,t)$ is a $C^1$ solution of \qref{eqC:Bernstein} for 
$0<s<\bar s$, $0\le t<\infty$, such that for every $t\ge0$,
$s\mapsto U(s,t)$ is increasing and concave, with 
$U(0^+,t)=0$ and $\D_sU(0^+,t)=1$. Then for all $s\in(0,\bar s)$,
\[
U(s,t) \to U_\star(s) = \breve f_\star(s) \quad\mbox{as $t\to\infty$}.
\]
\end{proposition}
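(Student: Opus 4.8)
\medskip
\noindent\textit{Proof plan.}
The plan is to combine a priori monotonicity/boundedness bounds with a comparison principle and the rigidity of the self‑similar equilibrium family $\{U_\star(s/L)\}_{L>0}$. First I would record the elementary consequences of the hypotheses: concavity together with $U(0^+,t)=0$ and $\D_sU(0^+,t)=1$ gives $0<U(s,t)\le s$, and also $U(s,t)\le \frac2s\int_0^s U(r,t)\,dr\le 2U(s,t)$, so that \qref{eqC:Bernstein} can be written $\D_tU=-U^2-U+\phi$ with $U\le\phi\le 2U$; in particular $\D_tU\le U(1-U)$, and a scalar comparison with the logistic ODE $\dot y=y-y^2$ shows $\limsup_{t\to\infty}\sup_{s\le s_0}U(s,t)\le 1$ for every $s_0<\bar s$. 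These bounds, fed back into \qref{eqC:Bernstein}, make $\D_tU$ locally bounded in $t$, so by Arzel\`a--Ascoli (or Helly's selection theorem, using that each $U(\cdot,t)$ is monotone) the orbit $\{U(\cdot,t):t\ge0\}$ is precompact in $C_{\rm loc}(0,\bar s)$, and any subsequential limit along $t_n\to\infty$ is again increasing, concave, and vanishing at $0$.

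Next I would establish a comparison principle for \qref{eqC:Bernstein}: if $U^{(1)},U^{(2)}$ solve it with $U^{(1)}(\cdot,0)\le U^{(2)}(\cdot,0)$, then $U^{(1)}(\cdot,t)\le U^{(2)}(\cdot,t)$ for all $t$. This should follow from a touching‑point / Gronwall argument using that the local coefficient $-(U^{(1)}+U^{(2)}+1)$ is negative while the nonlocal operator $g\mapsto\frac2s\int_0^s g\,dr$ is order‑preserving. The dilated profiles $U_\star(s/L)$ solve \qref{eqC:Bernstein} as steady states (by the dilation symmetry \qref{scaleC:U}), and since $U_\star$ is increasing with $U_\star(0^+)=0$, $U_\star(\infty)=1$, and $\D_s[U_\star(s/L)]|_{0}=1/L$, these profiles interpolate monotonically between $0$ and $1$ as $L$ decreases. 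Using the normalization $\D_sU(0^+,t)=1$ together with concavity and the bounds $U(s,t)\le s$ and (after a finite waiting time $T$) $U(s,t)\le 1+\epsilon$ on $(0,s_0]$, I would arrange constants $L_-<1<L_+$ with $U_\star(L_-s)\le U(s,T)\le U_\star(L_+s)$ on $(0,s_0]$; the comparison principle then propagates this sandwich to all $t\ge T$.

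Finally, with $U_\star(L_-s)\le U(s,t)\le U_\star(L_+s)$ in force on $(0,s_0]$, the nonlocal term is trapped, via \qref{eq:Uint1}, in the band $U_\star(L_-s)+U_\star(L_-s)^2\le\phi(s,t)\le U_\star(L_+s)+U_\star(L_+s)^2$, so for each fixed $s$ the function $t\mapsto U(s,t)$ solves a scalar ODE $\dot u=-u^2-u+\phi(t)$ whose forcing lies in that band; since the autonomous equation $\dot u=-u^2-u+c$ has the globally attracting equilibrium at the positive root of $u^2+u=c$, it follows that $\limsup_{t\to\infty}|U(s,t)-U_\star(s)|$ is controlled by the band width, which tends to $0$ as $L_\pm\to1$. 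Equivalently, every subsequential limit of the precompact orbit is a steady solution of \qref{eqC:Bernstein} which is sandwiched between $U_\star(L_-\cdot)$ and $U_\star(L_+\cdot)$, hence by the equilibrium classification of Section~\ref{sec:equilibriumC} — the slope‑one normalization forcing $L=1$ — it equals $U_\star$; this gives $U(s,t)\to U_\star(s)$ for every $s\in(0,\bar s)$. I expect the middle step to be the main obstacle: proving the comparison principle for the nonlocal equation \qref{eqC:Bernstein}, and, more delicately, manufacturing barriers $U_\star(L_\pm\cdot)$ that pinch down to $U_\star$ as the waiting time grows — this is precisely where the concavity and the slope‑one normalization enter essentially, a Lyapunov/entropy shortcut being unavailable for these detailed‑balance‑free equations.
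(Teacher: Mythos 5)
Your preliminary steps are sound: the bounds $0<U\le s$, $U\le \frac2s\int_0^sU\,dr\le 2U$, the logistic upper bound, the precompactness of the orbit, and the comparison principle for \qref{eqC:Bernstein} (which the paper itself uses, in the proof of Theorem~\ref{thm:CVEQ2}) are all correct. The genuine gap is exactly the one you flag at the end and do not close: your sandwich $U_\star(L_-s)\le U(s,t)\le U_\star(L_+s)$ is set up once, at a finite time $T$, with \emph{fixed} $L_-<1<L_+$, and nothing in the argument makes $L_\pm$ tend to $1$. The scalar-ODE step is circular: trapping the forcing $\phi$ in the band $[U_\star(L_-s)+U_\star(L_-s)^2,\;U_\star(L_+s)+U_\star(L_+s)^2]$ only shows that $U(s,t)$ eventually lies between the positive roots of $u^2+u=\phi$, i.e.\ between $U_\star(L_-s)$ and $U_\star(L_+s)$ again — the band does not shrink. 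The fallback via subsequential limits also fails on two counts: (a) without an $H$-theorem (unavailable here, as the paper stresses) there is no reason an $\omega$-limit point of the orbit is a \emph{steady} solution; and (b) even granting that, the classification of Section~\ref{sec:equilibriumC} gives $U_\star(C\cdot)$ with $C\in[L_-,L_+]$, and locally uniform convergence on $(0,\bar s)$ does not transfer the normalization $\D_sU(0^+,t)=1$ to the limit, so $C=1$ is not forced. There is also a secondary obstruction to even installing the upper barrier: you only obtain $U(s,T)\le 1+\epsilon$ on $(0,s_0]$ after a waiting time, whereas $U_\star(L_+s)<1$ strictly for every finite $L_+$ and $s$, so the inequality $U(s,T)\le U_\star(L_+s)$ need not be achievable at all.

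The paper closes this gap by working not with dilates of $U_\star$ but with \emph{time-decaying} barriers in the variable $w=v-v_\star$, where $U=s(1-v)$. In these coordinates the slope-one normalization becomes $w(0^+,t)=0$, and the equation for $w$ has local damping at rate $U_\star(s)>0$ plus a nonlocal average that has a favorable sign whenever $w$ touches an $s$-increasing barrier from below. The upper barrier is $\bar w(s,t)=v_0(s)\wedge\bar b(t)$ with $\bar b$ solving $\dot{\bar b}=-\bar J(\bar b)$ for a carefully chosen positive $\bar J$ (the cap by $v_0(s)$ handles the degeneracy $U_\star(s)\to0$ as $s\to0$), so that $\bar b(t)\to0$; a symmetric construction gives the lower barrier. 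Since both barriers collapse to zero, $w\to0$ pointwise and hence $U(s,t)\to U_\star(s)$. If you want to salvage your outline, the essential missing ingredient is precisely such a quantitative, time-dependent contraction mechanism; a static family of equilibria cannot supply it.
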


%

\begin{proof}
\noindent\textit{Step 1: Change variables.}
We write $U_0(s)=U(s,0)$ and 
\begin{equation}
\uu(s,t)= s(1-v(s,t)), \quad
\uu_\star(s)=s(1-v_\star(s)), \quad 
\uu_0(s)= s(1-v_0(s)).
\label{eq:cv:vdef}
\end{equation}
Note $s\mapsto \uu(s,t)/s$ decreases, so $s\mapsto v(s,t)$ increases
and $0<v(s,t)<1$.  Writing 
\begin{equation}
v(s,t)=v_\star(s)+w(s,t), \qquad 
v_0(s)=v_\star(s)+w_0(s),
\label{eq:cv:wdef}
\end{equation}
we have 
\begin{equation}
 -v_\star(s) \le w(s,t) \le 1-v_\star(s),
\qquad
w_0(s)=v_0(s)-v_\star(s)<v_0(s),
\label{eq:cv:wbds}
\end{equation}
for all $s,t>0$. Because $\D_t \uu_\star=0$, we find $w$ satisfies
\begin{equation}
\D_t w(s,t) =  s(w+2v_\star-2)w - w + 
\frac2{s^2} \int_0^s w(r,t)\,r\,dr.
\label{eq:cv:weq}
\end{equation}

\noindent\textit{Step 2: Upper barrier.}
Our proof of convergence is based on comparison principles,
which will be established with the aid of two lemmas.
\begin{lemma}
There exists a decreasing function $\bar b(t)\to0$ 
as $t\to\infty$ such that $\bar b(0)=1$ and that
for all $s\in(0,\bar s)$ and $t>0$,
\[
\D_t \bar w(s,t) \ge -\uu_\star(s) \bar w(s,t)\,,\quad\mbox{where}\quad
\bar w(s,t):= v_0(s)\wedge \bar b(t) \ .
\]
\label{lem:cv:wdefup}
\end{lemma}
\begin{proof}[Proof of Lemma~\ref{lem:cv:wdefup}.]
It suffices to choose $\bar b(t)$ to solve an ODE of the form
\begin{equation}
\D_t b(t) = -\bar J(b(t)),  \qquad \bar b(0)=1,
\end{equation}
provided the function $\bar J$ is chosen positive so that for all $s>0$, 
\begin{equation}
b<v_0(s) 
\quad\mbox{implies}\quad 
\bar J(b) \le \uu_\star(s) b.
\label{eq:cv:breq1}
\end{equation}
Let $v_0\inv$ denote the inverse function. Then $v_0\inv$ is 
increasing on its domain $[0,1)$, with the property 
\[
b<v_0(s)
\quad\mbox{implies}\quad 
v_0\inv(b)<s 
\quad\mbox{implies}\quad 
\uu_\star(v_0\inv(b))<\uu_\star(s).
\]
We may then let 
\begin{equation}
\bar J(b) = \uu_\star\circ\left( v_0\inv(b\wedge\mbox{$\frac12$})\right) \, b\,.
\end{equation}
This expression is well-defined for all $b>0$ and works as desired.
\end{proof}

\begin{lemma} For all $s\in(0,\bar s)$ and $t>0$, $w(s,t)\le \bar w(s,t)$.
\label{lem:cv:upcmp}
\end{lemma}

\begin{proof}[Proof of Lemma~\ref{lem:cv:upcmp}.]
Let $\eps>0$, $\bar s>0$. We claim $w(s,t)<\bar w(s,t)+\eps$ for all
$t\ge0$, $0\le s\le \bar s$. This is true at $t=0$ with 
$\bar b(0)=1$.  Note $w(0,t)=0$ for all $t$. 
Suppose that for some minimal $t>0$, 
\[
w(s,t)=\bar w(s,t)+\eps \quad\mbox{with $0<s\le \bar s$.}
\]
Then $\D_t w(s,t)\ge \D_t\bar w(s,t)$. However, 
because $s\mapsto \bar w(s,t)$ is increasing, we have
\[
\frac2{s^2} \int_0^s w(r,t)\,r\,dr <
\frac2{s^2} \int_0^s (\bar w(r,t)+\eps)\,r\,dr < \bar w(s,t)+\eps=w(s,t),
\]
Then 
\[
-w+
\frac2{s^2} \int_0^s w(r,t)\,r\,dr <0,
\]
so, because $0<w+v_\star\le 1$ and $s(v_\star(s)-1)=-\uu_\star(s)<0$, we deduce from (\ref{eq:cv:weq}) that
\[
\D_t w(s,t) <  s(v_\star(s)-1)\bar w = -\uu_\star(s) \bar w(s,t) \le \D_t \bar w(s,t)
\le \D_t w(s,t).
\]
This contradiction proves the claim. Because $\eps>0$ and $\bar s>0$ are 
arbitrary, the result of the Lemma follows.
\end{proof}

\noindent\textit{Step 3: Lower barrier.} In a similar way (we omit details) 
we can find $\ubar b(t)$ increasing with $\ubar b(t)\to0$ as $t\to\infty$
such that for all $s\in(0,\bar s)$ and $t>0$,
\begin{equation}
\D_t \ubar w(s,t)\le -\uu_\star(s)\ubar w(s,t),
\quad\mbox{where}\quad
\ubar w(s,t) := (-v_\star(s))\vee \ubar b(t)\,.
\end{equation}
(Here $a\vee b$ means $\max(a,b)$.)
Then it follows $w(s,t)\ge \ubar w(s,t)$ for all $s,t>0$.

Now, because $\ubar w(s,t)\le w(s,t)\le \bar w(s,t)$ and $\ubar w(s,t)$,
$\bar w(s,t)\to0$ as $t\to \infty$ for each $s>0$, 
the convergence result for $U(s,t)$ in the Proposition follows. 
This finishes the proof of the Proposition.
\end{proof}

\begin{proof}[Proof of Theorem~\ref{thm:CVEQ1}]
The result of the Proposition applies to the Bernstein transform
$U(s,t)=\breve F_t(s)$ for arbitrary $\bar s>0$,
and this yields \qref{eq:Ulim}.
This finishes the main step of the proof, and it remains to deduce 
\qref{lim:F} and \qref{lim:xF}.
We know that $m_0(F_t)=\uu(\infty,t)\to1=U_\star(\infty)$
as $t\to\infty$, hence by
Proposition~\ref{prop:narrow} it follows 
$F_t(dx)\to f_\star(x)\,dx$ narrowly as $t\to\infty$.


By consequence, because $m_1(F_t)\equiv 1$ is bounded,
the measures $xF_t(dx)$ converge to $xf_\star(x)\,dx$ 
vaguely, and because $m_1(F_t)\equiv m_1(f_\star)$, 
this convergence also holds narrowly.
This finishes the proof of the Theorem.
\end{proof}

%

\section{Weak convergence to zero with infinite first moment} \label{sec:disc_zero}

Next we study the case with infinite first moment. 
If the initial data has infinite first moment, 
the solution converges to zero in a weak sense, with all clusters growing 
asymptotically to infinite size, loosely speaking.

\begin{theorem}
Assume $m_1(\Finit)=\int_0^\infty x \Finit(dx)=\infty$. Then 
as $t\to\infty$, 
\[
F_t(dx) \vto 0 
\]
vaguely on $[0,\infty)$, while $m_0(F_t)\to1$.  
\label{thm:CVEQ2}
\end{theorem}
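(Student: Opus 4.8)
\textbf{Proof plan for Theorem~\ref{thm:CVEQ2}.}
The plan is to work with the Bernstein transform $U(s,t)=\breve F_t(s)$, which by Theorem~\ref{thm:EU-modelC} satisfies \qref{eqC:Bernstein}, and to show that $U(s,t)\to1$ for every fixed $s\in(0,\infty)$, while $U(\infty,t)=m_0(F_t)\to1$. By Proposition~\ref{prop:narrow}, pointwise convergence of $\breve F_t$ on $[0,\infty]$ is equivalent to narrow convergence of $F_t$; convergence to the constant $1$ on $(0,\infty)$ together with the value $1$ at $s=\infty$ corresponds, via \qref{def:Btransform} and \qref{eq:a0ainfty}, to the limiting L\'evy triple $(0,1,0)$, i.e.\ a unit atom at $\infty$ in the $\kappa$-measure. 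In measure terms this says $F_t\vto 0$ vaguely on $[0,\infty)$ (all the mass escapes to infinity) while $m_0(F_t)\to1$, which is exactly the assertion. The fact that $m_0(F_t)\to1$ is immediate and independent of the first moment: by \qref{eq:m0evolve} the zeroth moment solves the logistic ODE $\dot m_0 = m_0-m_0^2$, and since $F_{\rm in}\ne0$ we have $m_0(F_0)>0$, so $m_0(F_t)\to1$.

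The substantive step is the pointwise limit $U(s,t)\to1$. First I would record that the hypothesis $m_1(\Finit)=\infty$ means $\D_sU(0^+,0)=+\infty$, i.e.\ $U_0(s)/s\to\infty$ as $s\to0$; and more is true dynamically. I expect the key monotonicity to be that for each fixed $s$, once we are past an initial transient, $U(s,t)$ is pushed upward toward $1$. One clean way to see this: fix $s>0$ and set $\psi(t)=U(s,t)$. From \qref{eqC:Bernstein}, since $r\mapsto U(r,t)$ is increasing, $\frac2s\int_0^s U(r,t)\,dr \le 2U(s,t)$, which only gives an upper bound; for the lower bound I would instead exploit the dilational symmetry \qref{scaleC:U} together with a comparison argument. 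Because $m_1(\Finit)=\infty$, for any $M>0$ there is $s_M>0$ with $U_0(s)\ge Ms$ for $0<s<s_M$ — actually we want the stronger statement that $U_0$ dominates, near $0$, the initial data of solutions with arbitrarily large finite first moment. The plan is: given $\eps>0$ and a target $s$, choose a large first moment $m_1$; by Theorem~\ref{thm:CVEQ1} the corresponding solution $\hat F_t^{(m_1)}$ has $\breve{\hat F}_t^{(m_1)}(s)\to U_\star(s/m_1)\to 1$ as $m_1\to\infty$ (using $U_\star(0^+)=0$ is the wrong direction — rather $U_\star(\sigma)\to1$ as $\sigma\to\infty$, so we instead rescale so that the relevant argument is large). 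Concretely, fix $L$ large and consider $\hat U(\sigma,t):=U_\star(\sigma)$-type barriers: the dilated equilibrium $s\mapsto U_\star(Ls)$ is a steady solution of \qref{eqC:Bernstein}, and $U_\star(Ls)\to1$ as $L\to\infty$ for each fixed $s$.

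So the heart of the argument is a comparison principle: I claim that if $U_0(s)\ge U_\star(Ls)$ for all $s$ in some interval $(0,\bar s)$ — which holds for every $L$ once $m_1(\Finit)=\infty$, since $U_\star(Ls)\le Ls$ and $U_0(s)/s\to\infty$, hmm that inequality goes the wrong way near $s=0$; the correct comparison is $U_0(s)\ge \min(U_\star(Ls),c)$ for suitable $c<1$, using that $U_\star(Ls)$ is bounded — then $U(s,t)\ge$ (the solution started from that lower barrier), and I would run the barrier/comparison machinery of Proposition~\ref{prop:Ulim}, Step~3 (lower barrier), adapted to a dilated equilibrium rather than $U_\star$ itself, to conclude $\liminf_{t\to\infty}U(s,t)\ge U_\star(Ls)$ for every admissible $L$. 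Letting $L\to\infty$ gives $U(s,t)\to1$ for each $s\in(0,\infty)$. The main obstacle — and the step needing the most care — is setting up this monotone comparison correctly: Model~C does not obey a naive comparison principle on the nonlocal term $\frac2s\int_0^s U(r,t)\,dr$ unless one also controls concavity/monotonicity in $s$, exactly as in the proof of Proposition~\ref{prop:Ulim}, where the sign of $-w+\frac2{s^2}\int_0^s w\,r\,dr$ was pinned down using that $s\mapsto \bar w$ is increasing. I would therefore phrase everything in the variable $v(s,t)=1-U(s,t)/s$ as in Step~1 of that proof, verify $v(s,t)$ stays in $(0,1)$ and increasing in $s$ (inherited from $U$ being Bernstein), compare against the subsolution built from the dilated equilibrium, and push the barrier down to $0$. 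Finally, assembling: $U(s,t)\to1$ on $(0,\infty)$ plus $m_0(F_t)=U(\infty,t)\to1$, fed into Proposition~\ref{prop:narrow}, yields $F_t\vto0$ on $[0,\infty)$ with $m_0(F_t)\to1$, completing the proof.
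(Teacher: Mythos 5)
Your overall strategy is the right one and matches the paper's in outline: obtain $\liminf_{t\to\infty}U(s,t)\ge U_\star(Ls)$ for arbitrarily large dilations $L$ by a comparison from below, combine with $U(s,t)\le m_0(F_t)\to1$ from the logistic equation, and pass from $U(s,t)\to1$ to vague convergence of $F_t$ via the Laplace/Bernstein continuity theorem. (One small slip at the end: Proposition~\ref{prop:narrow} concerns narrow convergence to a limit in $\calM_+(0,\infty)$ and does not apply when the mass escapes to infinity; the correct conclusion is exactly as you say via the L\'evy-triple $(0,1,0)$, or more simply by noting $m_0(F_t)-U(s,t)=\calL F_t(s)\to0$ and invoking the Laplace continuity theorem.)

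The genuine gap is in the construction of the lower comparison object. A static barrier at the level of a dilated equilibrium cannot be fitted under $U_0$: for the truncated barrier $W_L(s)=U_\star(Ls)\wedge c$ with fixed $c\in(0,1)$, one has $W_L(s)\ge c/2$ (say) for all $s\ge \sigma_c/L$ with $\sigma_c=U_\star\inv(c/2)$ fixed, and $\sigma_c/L\to0$ as $L\to\infty$; since $U_0(0^+)=0$, the required domination $U_0\ge W_L$ therefore fails near $s=\sigma_c/L$ once $L$ is large --- precisely the regime you need. (The condition $m_1(\Finit)=\infty$ only gives $U_0(s)\ge Ls$ on an interval $(0,s_L]$ with $Ls_L=U_0(s_L)\to0$, so it cannot rescue a barrier pinned at a fixed positive level.) In addition, $U_\star(Ls)\wedge c$ is not a Bernstein function, so it is not admissible initial data for the flow. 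The fix --- and what the paper does --- is to make the comparison \emph{dynamic}: choose the comparison solution to start from the Bernstein transform of $ka^2e^{-ax}\,dx$, namely $u_0(s)=kas/(a+s)\le k(s\wedge a)$, which for $a$ small enough lies below $U_0$ everywhere (using $U_0(s)\ge ks$ near $0$ and monotonicity of $U_0$), yet has first moment $k$. The comparison principle gives $u(s,t)\le U(s,t)$ for all $t$, and Theorem~\ref{thm:CVEQ1} (equivalently Proposition~\ref{prop:Ulim} after dilation) lifts $u(s,t)$ up to $\breve f_\star(ks)$ as $t\to\infty$; letting $k\to\infty$ then yields $\liminf_t U(s,t)\ge1$. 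In short: the subsolution need not start at the dilated equilibrium --- it only needs to start below $U_0$ with large finite first moment, and the already-proved convergence theorem does the lifting.
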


\noindent{\bf Proof.} The main step of the proof involves showing that
\begin{equation}\label{lim:U1}
U(s,t) \to 1 \quad\mbox{as $t\to\infty$, \  \ for all $s\in(0,\infty)$.}
\end{equation}
The assumption $m_1=\infty$ implies 
$\D_s\uu(0^+,0)=\infty$. Then for any positive constant $k$, 
we compare with a solution corresponding to $F_0(dx)=ka^2 e^{-a x}\,dx$,
whose Bernstein transform is
\[
u_0(s) = \int_0^\infty (1-e^{-sx}) ka^2 e^{-ax}\,dx = \frac{kas}{a+s} \,.
\]
Note that 
\begin{equation}
u_0(0)=0,\quad \D_s u_0(0)=k,  
\end{equation}
and for $a$ sufficiently small we have
\begin{equation}
u_0(s)<\uu_0(s) \quad\mbox{for all $s>0$}.
\label{eq:cv:uu0def}
\end{equation}
This is so because $u_0(s)\le k(s\wedge a)$,
while $U_0$ is increasing with $U_0(s)\ge ks$  for all $s$ sufficiently small.

Let $u(s,t)$ denote the solution of equation \qref{eqC:Bernstein} with
initial data $u(s,0)=u_0(s)$ for $s>0$. By a comparison argument 
similar to that above, we have 
\[
u(s,t)\le \uu(s,t) \quad\mbox{ for all $s,t>0$.}
\]
Because $u(s/k,t)$ is also a solution of \qref{eqC:Bernstein}, with first moment equal to 1,
it follows from Theorem~\ref{thm:CVEQ1}
that for every $s>0$, $u(s/k,t)\to \breve f_\star(s)$ as $t\to\infty$, 
hence due to the dilational invariance of \qref{eqC:Bernstein} we have
\begin{equation}
u(s,t)\to \breve f_\star(ks) \quad\mbox{as $t\to\infty$, for every $s>0$}.
\end{equation}
It follows that for every $s>0$,
\begin{equation}
\liminf_{t\to\infty} \uu(s,t) 
\ge \breve f_\star(ks).
\end{equation}
This is true for every $k>0$, hence we infer
\begin{equation}\label{eqC:liminfU}
\liminf_{t\to\infty} \uu(s,t) \ge 1\,.
\end{equation}
On the other hand, we know $\uu(s,t)\le m_0(t)\to1$ from 
\qref{eq:m0evolve}.
Therefore we conclude $\uu(s,t)\to1$ as $t\to\infty$.

Because $m_0(F_t)=\uu(\infty,t)\to1$ as $t\to\infty$, we have
\[
m_0(F_t)-U(s,t) = \int_0^\infty e^{-sx}F_t(dx) \to0 \quad\mbox{for all $s\in(0,\infty)$}\,.
\]
Hence,  $F_t(dx)\vto 0$  by the standard continuity theorem for Laplace transforms.
\endproof


\section{No detailed balance for Model C} \label{sec:cts_balance}
Here we verify that the size-continuous Model C admits no equilibrium finite measure solution $F_*(dx)$ 
supported on $\R_+$ that satisfies a natural weak form of the detailed balance condition.
First, note that the detailed balance condition \qref{eq:detailed}
for the discrete coagulation-fragmentation equations can be written in a weak form
by requiring that for any bounded sequence $(\psi_{i,j})_{i,j\in\N}$
\begin{equation}
\sum_{i,j=1}^\infty \psi_{i,j}a_{i,j}f_if_j = 
\sum_{i=2}^\infty \left( \sum_{j=1}^i \psi_{i-j,j}b_{i-j,j}\right)f_i \ .
\end{equation}
For a general coagulation-fragmentation equation in the form \qref{eq:CF2}, 
we will say that $F_*$ \textit{satisfies detailed balance in weak form} 
if for any smooth bounded test function $\psi(x,y)$, 
\begin{align}
&\int_{\R_+^2} \psi(x,y)a(x,y)\, F_*(dx)\,F_*(dy) = 
\nonumber\\
&\hspace{2.5cm} \int_{\R_+} \left(\int_0^x \psi(x-y,y) b(x-y,y)\,dy\right) F_*(dx) \ .
\label{eq:detailed_balance_weak}
\end{align}
Note that if  some $F_*$ exists satisfying this condition, then it is an equilibrium solution of \qref{eq:CF2},
as one can check by taking 
\[
\psi(x,y)=\varphi(x+y)-\varphi(x)-\varphi(y)\,.
\]
\begin{theorem}
For Model C, no finite measure $F_*$ on $(0,\infty)$ exists that satisfies 
condition \qref{eq:detailed_balance_weak} for detailed balance in weak form. 
\end{theorem}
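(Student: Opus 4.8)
The plan is to test the weak detailed balance identity \qref{eq:detailed_balance_weak} against the one‑parameter family of separable exponential test functions $\psi(x,y)=e^{-s(x+y)}$, $s>0$, and to read off a contradiction from the resulting algebraic constraint on the Laplace transform $\calL F_*$. Each such $\psi$ is smooth and bounded on $\R_+^2$, hence admissible, and all the integrals that occur are finite because $F_*$ is a finite measure, so no delicate justification is needed in order to substitute these test functions.

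With $\psi(x,y)=e^{-sx}e^{-sy}$ the two sides of \qref{eq:detailed_balance_weak} collapse to powers of a single scalar. On the coagulation side, $a(x,y)\equiv A$ is constant, so the double integral factors and equals $A\,\calL F_*(s)^2$. On the fragmentation side, the structural feature that makes Model C special is that $b(x-y,y)=B/\bigl((x-y)+y\bigr)=B/x$ and $\psi(x-y,y)=e^{-sx}$ are both independent of the inner variable $y$; hence integration over $y\in(0,x)$ merely multiplies by the length $x$, which cancels the $1/x$ in the fragmentation kernel:
\[
\int_0^x \psi(x-y,y)\,b(x-y,y)\,dy = \int_0^x e^{-sx}\,\frac{B}{x}\,dy = B\,e^{-sx}.
\]
Integrating against $F_*(dx)$ then gives $B\,\calL F_*(s)$ for the right‑hand side. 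Therefore \qref{eq:detailed_balance_weak} forces $A\,\calL F_*(s)^2 = B\,\calL F_*(s)$ for every $s>0$, so for each $s$ either $\calL F_*(s)=0$ or $\calL F_*(s)=B/A$.

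To conclude, suppose $F_*$ is not the zero measure. Since $e^{-sx}>0$ for every $x\in(0,\infty)$ while $F_*$ gives positive total mass to $(0,\infty)$, we have $\calL F_*(s)>0$ for all $s>0$, so necessarily $\calL F_*(s)\equiv B/A$. But $s\mapsto\calL F_*(s)$ is strictly decreasing on $(0,\infty)$, because $e^{-s_1 x}>e^{-s_2 x}$ for $x>0$ whenever $s_1<s_2$; equivalently, $\calL F_*(s)\to 0$ as $s\to\infty$ by monotone convergence. Either way this contradicts $\calL F_*\equiv B/A>0$. Hence $F_*$ must be the zero measure; since the only equilibrium of Model C has $m_0=1$ by \qref{eq:m0evolve}, no equilibrium of Model C satisfies weak detailed balance, which is the assertion of the theorem.

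I do not expect a genuine obstacle in this argument: once \qref{eq:detailed_balance_weak} is evaluated on the separable exponential, the combination of a constant coagulation rate with the $1/(x+y)$ fragmentation rate makes both sides reduce to powers of the single function $\calL F_*(s)$, and a positive strictly decreasing function plainly cannot take only the two values $0$ and $B/A$. The only points needing attention are verifying admissibility of the test functions and noting that the inner $y$‑integration produces exactly the factor $x$ that cancels the $1/x$ in $b$. One could run the same computation through the Bernstein transform using $\calL F_* = m_0(F_*)-\breve F_*$, but the Laplace version is the most transparent.
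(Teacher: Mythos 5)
Your argument is correct, and it reaches the conclusion by a somewhat different and in fact more economical route than the paper. The paper tests \qref{eq:detailed_balance_weak} first with $\psi\equiv1$ (forcing $m_0(F_*)=1$) and then with the asymmetric choice $\psi(x,y)=1-e^{-sy}$; this yields $\breve F_*(s)=G(s)/s$ with $G'(s)=\breve F_*(s)$, so a short ODE argument gives $G(s)=Cs$ and hence $\breve F_*\equiv C$, impossible for a finite measure on $(0,\infty)$. You instead use the separable exponential $\psi(x,y)=e^{-s(x+y)}$, for which the same cancellation of $b(x-y,y)=B/x$ against the length $x$ of the inner integration collapses the identity to the purely algebraic relation $A\,\calL F_*(s)^2=B\,\calL F_*(s)$, and the contradiction follows from $\calL F_*(s)\to0$ as $s\to\infty$ with no differentiation step. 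What your version buys is directness (one family of test functions, no ODE); what the paper's version buys is the explicit normalization $m_0(F_*)=1$ en route, which ties in with \qref{eq:m0evolve}. One cosmetic point: as stated, the theorem is implicitly about nonzero measures (the zero measure satisfies \qref{eq:detailed_balance_weak} vacuously), and both your proof and the paper's handle this the same way, by ruling out all nontrivial $F_*$; your closing remark invoking $m_0=1$ for equilibria makes this explicit and is fine.
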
 
\begin{proof} Recall that $a(x,y)=2$, $b(x,y)=2/(x+y)$ for Model C.
By taking $\psi(x,y)=1$ in \eqref{eq:detailed_balance_weak}
we find $m_0(F_*)^2 = m_0(F_*)$, so a nontrivial
solution requires $m_0(F_*)=1$.  Next, with $\psi(x,y)=1-e^{-sy}$ we find
\[
\breve F_*(s) = \int_{\R_+} \frac{e^{-sx}-1+sx}{sx}\,F_*(dx) =: \frac{G(s)}s\ ,
\] 
for all $s>0$. We compute that $G'(s)=\breve F_*(s) = G(s)/s$, whence $G(s)=Cs$ for some constant $C>0$.
But this gives $\breve F_*(s)\equiv C$, which is not possible for a
finite measure $F_*$ supported in $(0,\infty)$.
\end{proof}

\begin{remark} If detailed balance holds in the general weak form \qref{eq:detailed_balance_weak},
there is a formal $H$ theorem in the following sense:
Assume the measure $F_t(dx)$ is absolutely continuous 
with respect to $F_*(dx)$ for all $t$,
with Radon-Nikodym derivative $f_t(x)$ so that $F_t(dx)=f_t(x)F_*(dx)$.
Then the weak-form coagulation-fragmentation equation
\qref{eq:CF2} takes the form
\begin{equation}
\begin{split}
&\frac{d}{dt} \int_{\rplus} \varphi(x) \, f_t(x)\,F_*(dx) = 
\\&\ \ 
\frac{1}{2} \int_{{\mathbb R}_+^2} 
\big( \varphi (x+y) - \varphi(x) - \varphi(y) \big)
\bigl(f_t(x)f_t(y)-f_t(x+y)\bigr) K_*(dx,dy) \,, 
\end{split}
\end{equation}
where $K_*(dx,dy)=a(x,y)F_*(dx)F_*(dy)$. 
If we now put
\begin{align}
{\mathcal H} &= \int_{\R_+} f_t(x)(\ln f_t(x)-1) \, F_*(dx)\,, \\
{\mathcal I} &= \frac12 \int_{\R_+^2} 
\left( \frac{ f_t(x+y)}{f_t(x)\,f_t(y)}-1\right)
\ln \frac{ f_t(x+y)}{f_t(x)\,f_t(y)} \, K_t(dx,dy)\,,
\end{align}
where $K_t(dx,dy)=f_t(x)f_t(y)K_*(dx,dy)$,
then $\mathcal I\ge0$ and formally
\begin{equation}
\frac d{dt} {\mathcal H} + {\mathcal I} = 0\,.
\end{equation}
\end{remark}

\vfil\pagebreak

\part{Analysis of Model D}
\myrunningheads

\section{Equations for the discrete-size model}

As discussed in section 2.3, we take $\alpha=\beta=2$ in the expression 
\qref{eq:rates_niwa_disc} 
for the coagulation and fragmentation rates to get the equations for Model D
that will be studied below. 
In weak form we require that for any bounded test sequence $(\varphi_i)$, 
\begin{eqnarray}
&&\hspace{-1cm}
\frac{d}{dt} \sum_{i=1}^\infty  \varphi_i \, f_i(t) =
\sum_{i,j=1}^\infty \big( \varphi_{i+j} - \varphi_i - \varphi_j \big) \,   f_i(t) \, f_j(t) \nonumber \\
&&\hspace{2.cm}
+ \sum_{i=1}^\infty \Big( -  \varphi_i + \frac{2}{i+1} \sum_{j=1}^{i} \varphi_j  \Big) f_i(t) \,  . 
\label{eq:CF2_disc_NiwaC}
\end{eqnarray}
In strong form, the system is written as follows: 
\begin{eqnarray}
&&\hspace{-1.5cm}
    \frac{\partial f_i}{\partial t}(t) = Q_a(f)_i(t) + Q_b(f)_i(t) ,
\label{eq:CF3_disc_NiwaC}\\
&&\hspace{-1.5cm}
Q_a(f)_i(t) = \sum_{j=1}^{i-1} f_j(t)  \, f_{i-j}(t) - 2 \sum_{j=1}^\infty f_i(t) \, f_j(t) , 
\label{eq:CF4_disc_NiwaC} \\
&&\hspace{-1.5cm}
Q_b(f)_i(x,t) =  -f_i(t) + 2 \sum_{j=i}^\infty \frac{1}{j+1} \, f_{j}(t) \nonumber\\
&&\hspace{-0.2cm}
\qquad = -\left(\frac{i-1}{i+1}\right)  f_i(t) + 2 \sum_{j=i+1}^\infty \frac{1}{j+1} \, f_{j}(t). 
\label{eq:CF5_disc_NiwaC}
\end{eqnarray}

\textit{Bernstein transform.} 
It is useful to describe the long-time dynamics of the discrete model in terms of the Bernstein
transform of the discrete measure $\sum_{j=1}^\infty f_j(t)\,\delta_j(dx)$. 
We define
\begin{equation}\label{eqD:BTdef}
\breve f(s,t) = \sum_{j=1}^\infty (1-e^{-js}) f_j(t) \ .
\end{equation}
Taking the test function $\varphi_j = 1-e^{-js}$ in \qref{eq:CF2_disc_NiwaC}
and using the fact that
\[
\varphi_{i+j}-\varphi_i-\varphi_j = -\varphi_i\varphi_j \ ,
\]
we find that $\breve f(s,t)$ satisfies the equation
\begin{equation}\label{eqD:BT1}
\boxed{ 
\D_t \breve f(s,t) = -\breve f^2 - \breve f + 2A_1(\breve f) \ ,
}\end{equation}
for any $s, t>0$, where
\begin{eqnarray}
 A_1(\breve f)(s,t) &=& \sum_{i=1}^\infty \frac{f_i(t)}{i+1} \sum_{j=0}^i (1-e^{-sj})
\nonumber\\ &=&  
 \sum_{i=1}^\infty f_i(t) \left(1- \frac{1}{1-e^{-s}} \int_0^s  e^{-r(i+1)}\,dr \right)
\nonumber \\ &=& 
  \sum_{i=1}^\infty \frac{f_i(t)}{1-e^{-s}} 
 \int_0^s (1-e^{-ri})e^{-r}\,dr  
\nonumber \\ &=& 
  \frac1{1-e^{-s}}\int_0^s \breve f(r,t) e^{-r}\,dr.
\end{eqnarray}
Eq.~\qref{eqD:BT1} is a nonlocal analog of the logistic equation 
that one would obtain if the averaging term $2A_1(\breve f)$ were replaced by $2\breve f$.

It is very remarkable that Eq.~\qref{eqD:BT1} for the Bernstein
transform of Model D transforms \textit{exactly} into Eq.~\qref{eqC:Bernstein}
for the Bernstein transform of Model C, by a simple change of variables. With 
\begin{equation}\label{eqD:change}
\sigma = 1-e^{-s}\,, \qquad u(\sigma,t)= \breve f(s,t)\,,
\end{equation}
one finds that \qref{eqD:BT1} for $s\in(0,\infty)$, $t>0$, is equivalent to 
\begin{equation}\label{eqD:BTueq}
\D_t u(\sigma,t) = -u^2-u + \frac2\sigma\int_0^\sigma u(r,t)\,dr\,,
\end{equation}
for $\sigma\in(0,1)$, $t>0$.  This equation will be used below in 
sections~\ref{secD:equilibrium} and \ref{secD:longtime}
below to study the equilibria and long-time behavior of solutions to Model D.

\section{Equilibrium profiles for Model D} \label{secD:equilibrium}

As will become clear from the well-posedness theory to come,
any equilibrium solution $f= (f_i)$ of Model D has a finite zeroth moment 
\[
m_0( f) =  \sum_{i=1}^\infty f_i \ .
\]
\begin{theorem}\label{thmD:equilibria}
For every $\mu\in[0,\infty)$ there is a unique equilibrium solution 
$f_\mu$ of Model D such that 
\[
m_1(f_\mu)= \sum_{i=1}^\infty i f_\mui = \mu \, . 
\]
The solution has the form 
\begin{equation}\label{eqD:lambdamu}
f_\mui = \gamma_\mui\, \lambda_\mu^{-i}  \ ,  \qquad 
\lambda_\mu = 1 + \frac4{27\mu}\,,
\end{equation}
where $\gamma_\mu$ is a completely monotone sequence with the asymptotic 
behavior
\begin{equation}\label{eqD:gammalim}
\gamma_\mui \sim \frac98 
\left( \frac{\mu\lambda_\mu}{\pi} \right)^{1/2} 
i^{-3/2} 
\quad\mbox{as $i\to\infty$}.
\end{equation}
Every equilibrium solution has the form $f_\mu$ for some $\mu$.
\end{theorem}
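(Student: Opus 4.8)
The plan is to exploit the change of variables \qref{eqD:change}, which converts the Bernstein-transform equation \qref{eqD:BT1} of Model~D into the Model~C equation \qref{eqD:BTueq} on $\sigma\in(0,1)$; this makes everything known about $U_\star$ and $B_3$ from Section~\ref{sec:equilibriumC} available and lets the Pick-sequence criterion Theorem~\ref{thm:Pick} do the work. First I would characterize every equilibrium. If $f=(f_i)$ is an equilibrium (which, as noted before the statement, has finite zeroth moment), set $u(\sigma)=\breve f(s)$ with $\sigma=1-e^{-s}$. Since $\breve f$ is the Bernstein transform of the finite measure $\sum_i f_i\,\delta_i$, the function $u$ is $C^\infty$ and increasing on $(0,1)$ with $u(0^+)=0$ and $u'(0^+)=m_1(f)=:\mu$, and by \qref{eqD:BTueq} it solves $u^2+u=\frac2\sigma\int_0^\sigma u(r)\,dr$. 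Differentiating in $\sigma$ and simplifying exactly as in the lines following \qref{eq:Uint1} gives $(2u+1)u'=\sigma^{-1}u(1-u)$. Since $u\equiv1$ is the only solution of this (for $\sigma>0$ regular) ODE through a point where $u=1$, while $u(0^+)=0$, one gets $0\le u<1$ on $(0,1)$; for $\mu>0$ one also has $u>0$ there, and integrating the separated form $\frac{u'}u+\frac{3u'}{1-u}=\frac1\sigma$ gives $\frac{u(\sigma)}{(1-u(\sigma))^3}=\mu\sigma$, the integration constant being forced to equal $u'(0^+)=m_1(f)$ by the behaviour as $\sigma\to0$ (which in particular shows $m_1(f)<\infty$). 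Thus $u(\sigma)=U_\star(\mu\sigma)$, i.e.\ $\breve f(s)=U_\star(\mu(1-e^{-s}))$, which determines $f$ uniquely; the case $\mu=0$ forces $u\equiv0$ and $f\equiv0$. This yields uniqueness of an equilibrium with prescribed first moment and the statement that every equilibrium is some $f_\mu$.

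Conversely, fix $\mu>0$ and consider $G(z):=B_3(-\mu(1-z))=\sum_{j\ge0}c_j z^j$. I would first observe that $G$ is a Pick function, being the composition of the Pick function $B_3$ of Lemma~\ref{L:B3} with the positive-slope affine map $z\mapsto\mu z-\mu$; that $G$ is analytic on $\C\setminus[\lambda_\mu,\infty)$ with $\lambda_\mu=1+\frac4{27\mu}$; and that $G$ is strictly positive on $(-\infty,\lambda_\mu)$, because this interval is carried by $z\mapsto\mu z-\mu$ into $(-\infty,\frac4{27})$ where $B_3>0$, its endpoint $\lambda_\mu$ being sent exactly onto the branch point $\frac4{27}$. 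Theorem~\ref{thm:Pick} with $\lambda=\lambda_\mu$ then says $(c_j\lambda_\mu^j)_{j\ge0}$ is a completely monotone sequence. Since $G$ has radius of convergence $\lambda_\mu>1$, all relevant series converge, and setting $\gamma_{\mu j}:=c_j\lambda_\mu^j$ and $f_{\mu j}:=\gamma_{\mu j}\lambda_\mu^{-j}=c_j$ for $j\ge1$ produces a nonnegative sequence with $\sum_{j\ge1}f_{\mu j}=G(1)-c_0=1-B_3(-\mu)<\infty$ and $\sum_{j\ge1}jf_{\mu j}=G'(1)=\mu B_3'(0)=\mu$, using $B_3(0)=1$, $B_3'(0)=1$. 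Its Bernstein transform is $\sum_{j\ge1}(1-e^{-js})f_{\mu j}=G(1)-G(e^{-s})=1-B_3(-\mu(1-e^{-s}))=U_\star(\mu(1-e^{-s}))$, so by the first paragraph $f_\mu$ is the unique equilibrium with first moment $\mu$ (the fact that this time-independent solution of the transformed equation corresponds to a genuine solution of the weak form \qref{eq:CF2_disc_NiwaC} being handled by the density argument as in the proof of Theorem~\ref{thm:EU-modelC}), and it has the asserted form with $\gamma_\mu$ completely monotone.

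For the tail asymptotics \qref{eqD:gammalim} I would do a singularity (Darboux) analysis at the dominant singularity $z=\lambda_\mu$ of $G$, closely parallel to the Tauberian computation of $\gamma_\star$ in the proof of Theorem~\ref{thm:cont_CM}. From \qref{eq:Usolution} (equivalently $B_3=1+wB_3^3$, i.e.\ $w=(B_3-1)B_3^{-3}$, whose derivative in $B_3$ vanishes at $B_3=\frac32$) one extracts $B_3(w)=\frac32-\frac94\sqrt{\frac4{27}-w}+O(\frac4{27}-w)$ near $w=\frac4{27}$, the coefficient $\frac94$ being the one already computed in step~3 of the proof of Theorem~\ref{thm:cont_CM}. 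Hence $G(z)=\frac32-\frac94\sqrt{\mu\lambda_\mu}\,\sqrt{1-z/\lambda_\mu}+O(1-z/\lambda_\mu)$ near $z=\lambda_\mu$, which by analyticity of $B_3$ off $[\frac4{27},\infty)$ is the only singularity on $|z|=\lambda_\mu$; using $[z^i]\sqrt{1-z/\lambda_\mu}\sim-\frac1{2\sqrt\pi}\,\lambda_\mu^{-i}i^{-3/2}$ gives $f_{\mu i}\sim\frac98\sqrt{\mu\lambda_\mu/\pi}\,\lambda_\mu^{-i}i^{-3/2}$, hence $\gamma_{\mu i}=\lambda_\mu^i f_{\mu i}\sim\frac98(\mu\lambda_\mu/\pi)^{1/2}i^{-3/2}$. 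The case $\mu=0$ is the trivial equilibrium $f_0\equiv0$ with $\lambda_0=\infty$.

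The step requiring real care is the choice of the constant term $c_0$ in the second paragraph: with a careless choice the generating function is negative for $z<0$ and Theorem~\ref{thm:Pick} does not apply, whereas the choice $c_0=B_3(-\mu)$ is exactly what collapses it to $G(z)=B_3(-\mu(1-z))$ and simultaneously identifies $\lambda_\mu=1+\frac4{27\mu}$ as the image of the Fuss-Catalan branch point. Everything else is either a transcription of the Model~C ODE computation of Section~\ref{sec:equilibriumC} or a routine singularity analysis, the needed constant $\frac94$ being already available there.
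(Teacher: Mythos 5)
Your proof is correct and follows essentially the same route as the paper: the change of variables $\sigma=1-e^{-s}$ reducing to the Model~C equilibrium equation, the generating-function identity $G(z)=B_3(\mu(z-1))$ combined with Lemma~\ref{L:B3} and Theorem~\ref{thm:Pick} for complete monotonicity of $(f_i\lambda_\mu^i)$, and singularity analysis at $z=\lambda_\mu$ for the tail. The only cosmetic differences are that you run the converse (existence) direction more explicitly and expand $G$ directly at its branch point \`a la Darboux, where the paper differentiates $G$ and applies the Flajolet--Sedgewick transfer theorem to $(1-z)^{-1/2}$; the constants agree.
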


\begin{proof}
To start the proof of the theorem, let $f=(f_i)$ be a nonzero equilibrium
solution of Model D with Bernstein transform $\breve f(s)$. 
Change variables to  $\sigma=1-e^{-s}\in(0,1)$ as in \qref{eqD:change} and
introduce $u(\sigma)=\breve f(s)$, $\sigma\in(0,1)$. Then $u$ is a stationary solution
of \qref{eqD:BT1} and satisfies \qref{eq:Uint1}, 
and it follows that 
\begin{equation}\label{eqD:usigma}
u(\sigma)=U_\star(\mu \sigma) \,, 
\quad\mbox{or}\quad
\breve f(s)= U_\star(\mu-\mu e^{-s})\,,
\end{equation} 
for some 
$\mu\in(0,\infty)$. Then by differentiation it follows that 
\begin{equation}\label{eqD:m1def}
\mu  = u'(0) = \breve f'(0) = \sum_{i=1}^\infty i f_i = m_1(f).
\end{equation}
Note that by \qref{eq:Usolution}  the zeroth moment 
of $f$ satisfies 
\begin{equation}\label{eqD:m0m1}
m_0(f)=\breve f(\infty)=U_\star(\mu)\,,\qquad 
\frac{m_0(f)}{(1-m_0(f))^3} = \mu.
\end{equation}

{\sl Generating function.} 
The properties of the equilibrium sequence $f$ shall be derived from the 
behavior of the {\sl generating function}
\begin{equation} \label{d:modelDgen}
G(z) := \sum_{i=0}^\infty f_i \,z^i \, ,
\end{equation} 
where we find it convenient (see section~\ref{secDC:discretization} below) to define 
\begin{equation}\label{eqD:f0}
 f_0 = 1-m_0(f) = 1-U_\star(\mu) \ .
\end{equation}

{\sl Complete monotonicity.}
Due to \qref{eqD:usigma} and \qref{eqD:f0}, we have
\[
U_\star(\mu\sigma) =\breve f(s)  = 1-G(e^{-s}) =  1-G(1-\sigma)\,.
\]
Hence by \qref{eqC:B3U}, we obtain
\begin{equation}\label{eqD:GB3}
\boxed{G(z) = B_3(\mu(z-1)).}
\end{equation}
By Lemma~\ref{L:B3} (from \cite{LP2016}), $G$ is a Pick function
analytic and nonnegative on the interval $(-\infty,\lambda_\mu)$ where
$\lambda_\mu=1+ \frac4{27\mu}$. 
Nonnegativity, and indeed complete monotonicity, of the sequence 
$\gamma_\mu$ given by 
\begin{equation}\label{eqD:gamma}
\gamma_\mui = f_i\,\lambda_\mu^i \ ,
\end{equation}
now follows immediately from Theorem~\ref{thm:Pick}.


%

{\sl Asymptotics.}
The decay rate of the sequence $f$ shall be deduced from the derivative
of $G(z)$ using Tauberian arguments, as developed in the book of 
Flajolet and Sedgewick~\cite{FlajoletS}.

Recall that $U_\star(s)$ has a branch point at $s=-\frac4{27}$, with 
$U_\star'(s-\frac4{27}) \sim \frac98 s^{-1/2}$ due to \qref{eqC:Vz0}.
The generating function $G(z)$ has a corresponding branch point at $z=\lambda_\mu$.

We rescale, replacing $z$ by $\lambda_\mu z$ to write
\[
G(\lambda_\mu z) =
\sum_{i=0}^\infty \gamma_\mui z^i 
= 1 - U_\star(\mu(1-\lambda_\mu z)) 
\]
Then differentiate, writing $\hat\lambda = \mu\lambda_\mu$, 
 to find that
\begin{align}
\sum_{i=1}^\infty i\gamma_\mui z^{i-1} 
&= \hat\lambda U_\star'(\mu(1-\lambda_\mu z)) 
 = \hat\lambda U_\star'\left(\hat\lambda(1-z)-\frac4{27}\right)
 \nonumber \\
& \sim \frac{9\hat\lambda^{1/2}}8 (1-z)^{-1/2} 
\label{eq:diffF1}
\end{align}
By Corollary VI.1 from \cite{FlajoletS} we deduce that as $i\to\infty$,
\[
i \gamma_\mui  \sim 
\frac{9\hat\lambda^{1/2}}8 \frac{(i-1)^{-1/2}}{\Gamma(1/2)} 
\sim
\frac{9\hat\lambda^{1/2}}8 \frac{i^{-1/2}}{\Gamma(1/2)},
\]
This yields \qref{eqD:gammalim}, since $\Gamma(1/2)=\sqrt\pi$.
\end{proof}

\subsection{Recursive computation of equilibria for Model D}
At equilibrium, the profile is required to satisfy
\begin{equation}
0= \sum_{i,j = 1}^\infty (\varphi_{i+j} - \varphi_i - \varphi_j) \, f_i \, f_j 
+ \sum_{i = 1}^\infty \big( - \varphi_i+ \frac{2}{i+1} \sum_{j = 1}^{i} \varphi_j  \big)f_i \ .
\end{equation}
Define
\begin{equation}\label{e:bidef}
 \nu_0= m_0(f) = \sum_{j=1}^\infty f_j \ , \qquad 
 \beta_i= \sum_{j=i}^\infty \frac{1}{j+1} \, f_{j} \ .
\end{equation}
Taking $\varphi_j\equiv1$ yields
\begin{equation}\label{e:f1eq}
0 = -\nu_0^2 - \nu_0 + 2 \sum_{i=1}^\infty \frac{i}{i+1} f_i
= -\nu_0^2 + \nu_0 -2\beta_1 \ .
\end{equation}
Next, taking $\varphi_k=1$ if $k=i$ and 0 otherwise yields
\begin{equation}\label{e:eqstrong}
0= \sum_{j=1}^{i-1} f_j  \, f_{i-j} - (2 \nu_0+1)f_i + 2 \beta_i, \qquad i\ge1.
\end{equation}

Based on these equations, we can compute the $f_j$ recursively,
in a manner analogous to the computation
of equilibria in the models studied by Ma et al.\ in \cite{Ma_etal_JTB11}.
 Starting from any given value of 
the parameter $\nu_0\in(0,1)$, 
set $\beta_1$ according to \qref{e:f1eq},   defining
\begin{equation}\label{e:f1rec}
\beta_1=\frac12(\nu_0-\nu_0^2) \ .
\end{equation}
Then for $i=1,2,3,\dots$ compute
\begin{eqnarray}\label{e:fjrec}
 f_i &=& 
\left(1+2\nu_0 \right)\inv \left(2 \beta_{i} + \sum_{j=1}^{i-1} f_j   f_{i-j}\right), \\
 \beta_{i+1} &=& \beta_{i} - \frac{ f_{i} }{ i+1}.
\label{e:brec}
\end{eqnarray}
These formulae are used to compute the discrete profile that is compared
to the continuous profile $f_\star$ in Figure~2 in section 15 below.

\section{Well-posedness for Model D} \label{sec:disc_wellposed}
Here we consider the discrete dynamics described by 
Eqs.~(\ref{eq:CF2_disc_NiwaC})--(\ref{eq:CF5_disc_NiwaC}), 
and establish well-posedness of the initial-value problem
by a simple strategy of proving local Lipschitz estimates on an appropriate Banach space.

We first introduce some preliminary notations. For $k \in {\mathbb R}$, 
and a real sequence $f = (f_i)_{i=1}^\infty$, 
we define the $k$-th moment $m_k(f)$ of $f$ and associated norm $\|f\|_k$ by:
\[
 m_k(f) = \sum_{i=1}^\infty i^k \, f_i \ , \qquad
 \|f\|_k = \sum_{i=1}^\infty i^k\, |f_i|\ .
 \]
We introduce the Banach space $\ell_{1,k}$ as the vector
space of sequences $f$ with finite norm $\|f\|_k<\infty$ 
and denote the positive cone in this space by 
\[
 \ell_{1,k}^+ = \big\{ f \in\ell_{1,k} \, \mbox{ such that } 
 f_i \geq 0  \mbox{ for all $i$} \big\}.
 \]

\begin{theorem}\label{t:disc_ivp}
Let $k\ge0$ and let $\finit = (f_{{\rm in}, i})_{i=1}^\infty$ be given in $\ell_{1,k}^+$. Then there exists a unique global-in-time solution 
$f\in C^1([0,\infty),\ell_{1,k}^+)$ for system 
(\ref{eq:CF2_disc_NiwaC})--(\ref{eq:CF5_disc_NiwaC})
with initial condition $f(0)=\finit$. 
Moreover, $f$ is $C^\infty$, and for all $t\ge0$ we have
\begin{equation}\label{eqD:m0ineq}
\D_t m_0(f(t))\le -m_0(f(t))^2 + m_0(f(t))\,,
\end{equation}
and
\begin{equation} \label{eqD:m0bound}
m_0(f(t))\le m_0(\finit) +1\,. 
\end{equation}
If $k\ge1$, then 
$m_1(f(t))=m_1(\finit)$ for all $t\ge0$.
\end{theorem}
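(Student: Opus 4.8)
The plan is to regard the system \qref{eq:CF3_disc_NiwaC}--\qref{eq:CF5_disc_NiwaC} as an autonomous ODE $\D_t f = \mathcal{Q}(f):=Q_a(f)+Q_b(f)$ on the Banach space $\ell_{1,k}$ and to apply the Picard--Lindel\"of theorem. First I would check that $\mathcal{Q}$ maps $\ell_{1,k}$ into itself and is Lipschitz on bounded sets. The coagulation part is a bounded bilinear form: with the constant kernel,
\[
\Big\| \Big( \sum_{j=1}^{i-1} f_j g_{i-j} \Big)_{i} \Big\|_k \le \sum_{j,l\ge1}(j+l)^k |f_j|\,|g_l| \le C_k\big(\|f\|_k\|g\|_0+\|f\|_0\|g\|_k\big),
\]
while the loss term $2f_i\sum_j g_j$ is controlled by $2\|f\|_k\|g\|_0$; since $\|\cdot\|_0\le\|\cdot\|_k$ for $k\ge0$ this gives $\|Q_a(f)\|_k\le C_k\|f\|_k^2$ together with the obvious bilinear Lipschitz bound. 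The fragmentation part is a bounded \emph{linear} operator: writing $Q_b(f)_i=-f_i+2\sum_{j\ge i} f_j/(j+1)$ and exchanging the order of summation, $\sum_i i^k\sum_{j\ge i}|f_j|/(j+1)\le\sum_j|f_j|(j+1)^{-1}\sum_{i=1}^j i^k\le\|f\|_k$, so $\|Q_b(f)\|_k\le 3\|f\|_k$. Hence $\mathcal{Q}$ is locally Lipschitz (in fact a polynomial map, hence $C^\infty$) on $\ell_{1,k}$, Picard--Lindel\"of yields a unique maximal solution $f\in C^1([0,T^\ast),\ell_{1,k})$, and bootstrapping $\D_t f=\mathcal{Q}(f)$ through the smooth map $\mathcal{Q}$ upgrades it to $C^\infty$.

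Next I would show that the solution stays in the positive cone $\ell_{1,k}^+$ by a quasipositivity argument. Rewrite the $i$-th equation as $\D_t f_i + a_i(t) f_i = P_i(t)$, where $a_i(t)=2m_0(f(t))+\tfrac{i-1}{i+1}\ge0$ and $P_i(t)=\sum_{j=1}^{i-1}f_jf_{i-j}+2\sum_{j>i}f_j/(j+1)$, so that $P_i\ge0$ whenever $f\ge0$. Applying this to the modified vector field obtained by replacing each $f_j$ by $(f_j)_+$ in the nonlinear terms --- which is still locally Lipschitz on $\ell_{1,k}$ and agrees with $\mathcal{Q}$ on the cone --- the Duhamel representation $f_i(t)=e^{-\int_0^t a_i(r)\,dr}f_{{\rm in},i}+\int_0^t e^{-\int_s^t a_i(r)\,dr}P_i(s)\,ds$ has a manifestly nonnegative right-hand side, so its solution remains in $\ell_{1,k}^+$; by uniqueness it coincides with the solution of the original system, which therefore stays in the cone.

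I would then take $\varphi_i\equiv1$ in the weak form \qref{eq:CF2_disc_NiwaC} (legitimate since $f(t)\in\ell_1$ makes the relevant double series absolutely convergent), obtaining $\D_t m_0(f)=-m_0(f)^2+\sum_i\tfrac{i-1}{i+1}f_i$; because $0\le\tfrac{i-1}{i+1}\le1$ and $f\ge0$ this is $\le-m_0(f)^2+m_0(f)$, which is \qref{eqD:m0ineq}, and comparison with the logistic ODE $y'=y(1-y)$ gives $m_0(f(t))\le\max(m_0(\finit),1)\le m_0(\finit)+1$, i.e.\ \qref{eqD:m0bound}. For global existence I must rule out $\|f(t)\|_k=m_k(f(t))\to\infty$ as $t\to T^\ast<\infty$: for $0\le k\le1$ one has $(i+j)^k\le i^k+j^k$, so coagulation does not increase $m_k$ and the fragmentation terms contribute at most $2m_k(f)$, whence $\D_t m_k(f)\le 2m_k(f)$ and $m_k(f(t))\le e^{2t}m_k(\finit)$; for $k>1$ one uses $(i+j)^k-i^k-j^k\le C_k(ij^{k-1}+i^{k-1}j)$ to get $\D_t m_k(f)\le 2m_k(f)+C_k\,m_1(f)\,m_{k-1}(f)$, and closes the estimate by a Gronwall argument inducting over $\lceil k\rceil$, starting from the bound on $m_0$ and the crude bound $m_1(f(t))\le e^{2t}m_1(\finit)$ available when $k\ge1$. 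Hence $T^\ast=\infty$. Finally, when $k\ge1$ the solution lies in $\ell_{1,1}$ for all $t$, so $\varphi_i=i$ is an admissible test sequence in \qref{eq:CF2_disc_NiwaC}; the coagulation integrand $\varphi_{i+j}-\varphi_i-\varphi_j$ vanishes identically and the fragmentation bracket $-i+\tfrac{2}{i+1}\cdot\tfrac{i(i+1)}{2}=0$, so $\D_t m_1(f)=0$.

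The main obstacle I anticipate is not any individual estimate but the care needed to make the positivity and global-existence steps genuinely rigorous rather than merely formal: one must run the integrating-factor argument on a vector field for which the Duhamel formula is actually valid (hence the detour through the truncated nonlinearity and the appeal to uniqueness), and one must close the moment cascade $m_0\rightsquigarrow m_1\rightsquigarrow m_2\rightsquigarrow\cdots$ uniformly on each bounded time interval before invoking the blow-up alternative.
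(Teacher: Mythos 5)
Your proposal is correct, and its overall skeleton (local well-posedness of the ODE on $\ell_{1,k}$ via local Lipschitz estimates on $Q_a$ and $Q_b$, positivity, a bound on $m_0$, continuation, and $m_1$-conservation by testing with $\varphi_i=i$) coincides with the paper's. The one place where you genuinely diverge is the positivity step: the paper first rescales by $\hat f_i(t)=e^{\lambda t}f_i(t)$ with $\lambda=2m_0(\finit)+3$, so that the coefficient of $\hat f_i$ in the rescaled vector field is positive as long as $m_0(\hat f)e^{-\lambda t}\le\hat M$, and then proves nonnegativity \emph{and} the bound $\nu_n(t)\le\hat M$ simultaneously by induction on the Picard iterates; you instead truncate the nonlinearity by $(f_j)_+$, read off nonnegativity from the componentwise Duhamel formula, and identify the truncated solution with the true one by uniqueness. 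Both are standard quasipositivity devices and both are complete; the paper's version has the small advantage of delivering the uniform bound $m_0(f(t))\le\hat M=m_0(\finit)+1$ as a by-product of the same induction, whereas you recover \qref{eqD:m0bound} afterwards by logistic comparison. For global continuation your moment cascade over $\lceil k\rceil$ is more work than necessary: once positivity and the $m_0$ bound are in hand, the paper's own Lipschitz estimate $\|Q_a(f)\|_k\lesssim 2^{k}\|f\|_k\|f\|_0$ together with $\|Q_b(f)\|_k\le 3\|f\|_k$ gives $\D_t\|f\|_k\le C\|f\|_k$ directly, with no case distinction on $k$ and no appeal to $(i+j)^k-i^k-j^k\le C_k(ij^{k-1}+i^{k-1}j)$. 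Your cascade is nonetheless valid (note $m_{k-1}\le m_k$ on sequences indexed by $i\ge1$, so the estimate closes), so this is a matter of economy rather than correctness.
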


\begin{proof} We first transform the problem to simplify the proof of positivity. 
Fix $\lambda= 2m_0(\finit)+3$ and change variables using
$\hat f_i(t) = e^{\lambda t}f_i(t)$. Then Eq.~\qref{eq:CF2_disc_NiwaC} is equivalent to 
\begin{equation}\label{eqD:hatf}
\frac{\D \hat f_i}{\D t} = 
Q_\lambda(\hat f)_i := \lambda \hat f_i + Q_a(\hat f)_i e^{-\lambda t} + Q_b(\hat f)_i.
\end{equation}
Using the inequality $i^k\le 2^k(j^k+(i-j)^k)$, we find that the quadratic map
$f\mapsto Q_a(f)$ is locally Lipschitz on $\ell_{1,k}$, satisfying 
\begin{eqnarray}
2^{-k-1}\|Q_a(f)-Q_a(g)\|_k &\le& 
\|f - g\|_k(\|f\|_0+\|g\|_0) 
\nonumber\\
&&
+\ \|f-g\|_0(\|f\|_k+\|g\|_k)\,.
\end{eqnarray}
Also, the linear map $f\mapsto Q_b(f)$ is bounded on $\ell_{1,k}$, due to 
the estimate
\[
\sum_{i=1}^\infty i^k \sum_{j=i}^\infty \frac1{j+1}|f_j| 
= \sum_{j=1}^\infty \sum_{i=1}^j \frac{i^k}{j+1}|f_j| 
\le \sum_{j=1}^\infty j^k|f_j| \ .
\]
Consequently, we have local existence of a unique smooth solution $\hat f$
to \qref{eqD:hatf} with values in $\ell_{1,k}$, and a corresponding 
smooth solution $f$ to  \qref{eq:CF2_disc_NiwaC}. 
If $k\ge1$, then because $f\mapsto m_1(f)$ is bounded on $\ell_{1,k}$,
we find $m_1(f(t))$ is constant in time
by taking $\varphi_i=i$ in \qref{eq:CF2_disc_NiwaC}. 
   
It remains to prove positivity and global existence, for any $k\ge0$.
The solution $\hat f$ is the limit of Picard iterates $\hat f\supn$ starting
with $\hat f^{(0)}(t)\equiv \finit$.
Note that the coefficient of $\hat f_i$ in $Q_\lambda(\hat f)_i$ is,
from \eqref{eq:CF4_disc_NiwaC}-\eqref{eq:CF5_disc_NiwaC},
\[
\lambda-2m_0(\hat f)e^{-\lambda t}-\frac{i-1}{i+1} > 
2(\hat M-m_0(\hat f)e^{-\lambda t}), \qquad \hat M:=\frac{\lambda-1}2.
\]

Let $\nu_n(t)=m_0(\hat f\supn(t))e^{-\lambda t}$. 
We show by induction the following statement: For all $n\in\N$, 
for all $t\ge0$ in the interval of existence,  
\begin{equation}\label{n-hyp}
0\le \nu_n(t)\le \hat M
\quad\mbox{and}\quad 
\hat f_i\supn(t)\ge0 
\quad\mbox{for all $i$.}
\end{equation}
For the induction step,  note that by the induction hypothesis, 
$Q_\lambda(\hat f\supn)_i\ge0$ for all $i$, therefore $\hat f^{(n+1)}_i(t)\ge0$
for all $i$ and $t\ge0$. 
We also have
\[
\sum_{i=1}^\infty Q_a(\hat f\supn(t))_i e^{-\lambda t} = -\nu_n(t)^2e^{\lambda t}, 
 \qquad \sum_{i=1}^\infty Q_b(\hat f\supn(t))_i \le \nu_n(t)e^{\lambda t}\,,
\]
hence
\begin{equation}
e^{-\lambda t} \D_t  (e^{\lambda t}\nu_{n+1})=
e^{-\lambda t} \D_t  m_0(\hat f^{(n+1)}) \le 
\lambda \nu_n - \nu_n^2 +\nu_n \le \lambda \hat M\,.
\end{equation}
The last inequality holds because $x\mapsto \lambda x-x^2+x$ increases
on $[0,\hat M]$ and $\hat M>1$.
Upon integration we deduce $0\le\nu_{n+1}(t)\le \hat M$.

Taking $n\to\infty$, we find that $\hat f_i(t)\ge0$ and 
$m_0(\hat f(t))\le \hat M e^{\lambda t}$, hence
\[
f(t)=e^{-\lambda t}\hat f(t) \in \ell_{1,k}^+ \quad\mbox{and}\quad
m_0(f(t))\le \hat M
\]
on the maximal interval of existence.
 Due to the local Lipschitz bounds established above,
the solution can be continued to exist in $\ell_{1,k}^+$ for all $t\in[0,\infty)$. 

To obtain \qref{eqD:m0ineq}, take $\varphi_i\equiv 1$ in 
\qref{eq:CF2_disc_NiwaC}, or take $s\to\infty$ in \qref{eqD:BT1}.
\end{proof}

\section{Long-time behavior for Model D}
\label{secD:longtime}

By using much of the same analysis as in the continuous-size case,
we obtain strong convergence to equilibrium for solutions with a finite first moment,
and weak convergence to zero for solutions with infinite first moment.


\begin{theorem}[Strong convergence with finite first moment]\label{thmD:CVEQ1}
Let $f(t)=(f_i(t))$ be any solution of Model D with initial data 
having finite first moment $\mu=m_1(\finit)$, so $\finit\in\ell_{1,1}^+$. 
Then the solution  converges strongly to the equilibrium solution $f_\mu$
having the same first moment:
\begin{equation}\label{limD:F}
\|f(t)-f_\mu\|_1 = \sum_{i=1} i|f_i(t)-f_\mui| \to 0 \quad\mbox{as $t\to\infty$.}
\end{equation}
\end{theorem}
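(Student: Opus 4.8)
The plan is to transfer the convergence analysis already developed for Model~C to the discrete setting via the substitution \qref{eqD:change}, and then to upgrade convergence of Bernstein transforms to strong $\ell_{1,1}$ convergence using conservation of the first moment together with a Scheffé-type argument. We may assume $\mu>0$, the case $\mu=0$ being trivial since then $\finit=0$ and $f(t)\equiv f_0=0$.

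\textit{Step 1: reduction to Proposition~\ref{prop:Ulim}.} Let $\breve f(s,t)=\sum_{j\ge1}(1-e^{-js})f_j(t)$ and put $u(\sigma,t)=\breve f(s,t)$ with $\sigma=1-e^{-s}\in(0,1)$, so that by \qref{eqD:BT1}--\qref{eqD:BTueq} the function $u$ solves \qref{eqC:Bernstein} on $\sigma\in(0,1)$. For each fixed $t$, $s\mapsto\breve f(s,t)$ is a Bernstein function, hence increasing and concave in $s$; the extra point to check is that $\sigma\mapsto u(\sigma,t)$ is still concave even though $\sigma=1-e^{-s}$ is a convex change of variable, and this follows from the identity $\D_s^2\breve f+\D_s\breve f=-\sum_j j(j-1)e^{-js}f_j(t)\le0$. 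Since $\finit\in\ell_{1,1}^+$, Theorem~\ref{t:disc_ivp} gives $m_1(f(t))=\mu$ for all $t$, and unwinding \qref{eqD:change} yields $u(0^+,t)=0$ and $\D_\sigma u(0^+,t)=\D_s\breve f(0^+,t)=m_1(f(t))=\mu$. By the dilational invariance \qref{scaleC:U} of \qref{eqC:Bernstein}, $\tilde u(\sigma,t):=u(\sigma/\mu,t)$ is then an increasing, concave, $C^1$ solution of \qref{eqC:Bernstein} on $(0,\mu)$ with $\tilde u(0^+,t)=0$ and $\D_\sigma\tilde u(0^+,t)=1$, so Proposition~\ref{prop:Ulim} with $\bar s=\mu$ gives $\tilde u(\sigma,t)\to U_\star(\sigma)$ as $t\to\infty$ for $\sigma\in(0,\mu)$. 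Equivalently, using the equilibrium identity \qref{eqD:usigma} and uniqueness of equilibria from Theorem~\ref{thmD:equilibria}, $\breve f(s,t)\to U_\star(\mu-\mu e^{-s})=\breve f_\mu(s)$ for every $s\in(0,\infty)$.

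\textit{Step 2: the zeroth moment.} To extend this convergence to $s=\infty$ I would exploit concavity of $u(\cdot,t)$ on $[0,1]$: since $u(0^+,t)=0$, $\D_\sigma u(0^+,t)=\mu$, and $u(1^-,t)=\breve f(\infty,t)=m_0(f(t))$, concavity gives the sandwich $\sigma\, m_0(f(t))\le u(\sigma,t)\le \mu\sigma$ for $\sigma\in(0,1)$. Letting $t\to\infty$ and then $\sigma\to1^-$ in the left inequality (after dividing by $\sigma$) gives $\limsup_{t\to\infty} m_0(f(t))\le U_\star(\mu)$, while $u(\sigma,t)\le m_0(f(t))$ (monotonicity in $\sigma$) together with Step~1 gives $U_\star(\mu\sigma)\le\liminf_{t\to\infty} m_0(f(t))$, hence $U_\star(\mu)\le\liminf_{t\to\infty} m_0(f(t))$ upon letting $\sigma\to1^-$. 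Therefore $m_0(f(t))\to U_\star(\mu)=m_0(f_\mu)$, and combining with Step~1 we obtain $\breve f(s,t)\to\breve f_\mu(s)$ for every $s\in[0,\infty]$.

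\textit{Step 3: upgrade to strong convergence.} Applying Proposition~\ref{prop:narrow} to the finite measures $\sum_j f_j(t)\delta_j\in\calM_+(0,\infty)$, we conclude that $\sum_j f_j(t)\delta_j\to\sum_j f_\mui\,\delta_j$ narrowly; in particular $f_i(t)\to f_\mui$ for each fixed $i$. Then $i f_i(t)\to i f_\mui$ termwise with all quantities nonnegative, and $\sum_i i f_i(t)=m_1(f(t))=\mu=m_1(f_\mu)=\sum_i i f_\mui$, so Scheffé's lemma for series gives $\|f(t)-f_\mu\|_1=\sum_i i\,|f_i(t)-f_\mui|\to0$, as claimed. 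The only genuinely new work beyond invoking earlier results lies in Step~1 --- checking that the convex substitution $\sigma=1-e^{-s}$ nonetheless preserves concavity (via $\D_s^2\breve f+\D_s\breve f\le0$) and that the hypotheses of Proposition~\ref{prop:Ulim} survive the rescaling onto the finite interval $(0,\mu)$; the $m_0$ sandwich and the Scheffé step are routine.
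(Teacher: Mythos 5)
Your proof is correct and follows essentially the same route as the paper: change variables to reduce to Proposition~\ref{prop:Ulim}, deduce pointwise convergence of the Bernstein transform to $\breve f_\mu$, obtain componentwise convergence $f_i(t)\to f_{\mu\:\! i}$, and upgrade to $\ell_{1,1}$ convergence using conservation of $m_1$ via a Scheff\'e argument (the paper cites Lemma~3.3 of Ball--Carr--Penrose for this last step). The only divergence is cosmetic: the paper passes through Theorem~\ref{th:Btopology}, which needs no convergence of $m_0$, whereas your use of Proposition~\ref{prop:narrow} requires your (correct) Step~2 sandwich; your explicit check that concavity survives the convex substitution $\sigma=1-e^{-s}$ is a detail the paper asserts without proof, and it is good that you verified it.
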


\begin{proof}
Let $\breve f(s,t)$ from \qref{eqD:BTdef} be the Bernstein transform of the solution,
and let $\mu=m_1(f)$ be the first moment (constant in time). Similarly to \qref{eqD:change},
we change variables according to 
 \begin{equation}\label{eqD:change2}
\sigma = \mu(1-e^{-s})\,, \quad u(\sigma,t)= \breve f(s,t)
= \sum_{i=1}^\infty \left(1-\left(1-\frac\sigma\mu\right)^i\right)f_i(t)\,.
\end{equation}
This function $u(\sigma,t)$ then is a solution to \qref{eqD:BTueq} 
for $0<\sigma<\mu$ and has the properties
that for all $t>0$, $u(0,t)=0$ and $\sigma\mapsto u(\sigma,t)$
is increasing and concave.

We now invoke Proposition~\ref{prop:Ulim} with $\bar s=\mu$,
and conclude that as $t\to\infty$,
$u(\sigma,t)\to U_*(\sigma)$ for all $\sigma\in(0,\mu)$.
It follows that 
\[
\breve f(s,t)\to \breve f_\mu(s) \quad\mbox{for all $s>0$}.
\]
By the continuity theorem \ref{th:Btopology}, it follows that as $t\to\infty$,
the discrete measures
\[
F_t(dx)=\sum_i f_i(t)\delta_i(dx) \wkto F_\mu(dx)=\sum_i f_\mui\delta_i(dx)
\]
weakly on $[0,\infty]$. But this implies $f_i(t)\to f_\mui$ for every $i\in\N$.
Then \qref{limD:F} follows, see \cite[Lemma 3.3]{BallCarrPenrose1986},
for example.
\end{proof}

\begin{theorem}[Weak convergence to zero with infinite first moment]
Let $f(t)$ be any solution of Model D with initial data 
$\finit\in\ell_{1,0}$ having infinite first moment. 
Then as $t\to\infty$ we have
$f_i(t)\to0$ for all $i$, and 
\[
m_0(f(t)) = \sum_{i=1}^\infty f_i(t) \to 1.
\]
\end{theorem}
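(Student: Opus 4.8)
The plan is to mirror the proof of Theorem~\ref{thm:CVEQ2} for the continuous model, working with the Bernstein transform $\breve f(s,t)$ of the solution. As in \qref{eqD:change}, set $\sigma=1-e^{-s}$ and $u(\sigma,t)=\breve f(s,t)$, so that $u$ solves \qref{eqD:BTueq} on $\sigma\in(0,1)$, is increasing and concave in $\sigma$ with $u(0^+,t)=0$, and satisfies $u(\sigma,t)\le m_0(f(t))$. The hypothesis $m_1(\finit)=\infty$ translates into $\D_\sigma u(0^+,0)=\infty$. The main step is to prove
\[
u(\sigma,t)\to 1 \quad\mbox{as }t\to\infty,\ \mbox{for every }\sigma\in(0,1).
\]

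For the lower bound I would compare with finite--first--moment solutions. Fix $k>0$ and let $g(t)$ be the Model~D solution (which exists and is unique by Theorem~\ref{t:disc_ivp}) whose initial data is a geometric sequence $g_i(0)=c\rho^i$ chosen so that $m_1(g(0))=k$. Its Bernstein transform $u_g(\cdot,0)$, viewed in the $\sigma$ variable, is increasing and concave with $\D_\sigma u_g(0^+,0)=k$ (hence $u_g(\sigma,0)\le k\sigma$), and is bounded above by $k(1-\rho)$. Since $u(\cdot,0)$ is concave with $u(\sigma,0)/\sigma\to\infty$ as $\sigma\to0$ and increases to the positive constant $m_0(\finit)$, one can pick $\rho$ close enough to $1$ that $u_g(\sigma,0)\le u(\sigma,0)$ on all of $(0,1)$ (near $0$ because $u\gtrsim k\sigma\ge u_g$, and away from $0$ because $u_g$ is bounded by the arbitrarily small $k(1-\rho)$ while $u(\cdot,0)$ stays above a positive constant there). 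A comparison principle for \qref{eqD:BTueq} on $(0,1)$ then yields $u(\sigma,t)\ge u_g(\sigma,t)$ for all $t\ge0$. By Theorem~\ref{thmD:CVEQ1}, $g(t)$ converges strongly to the equilibrium $f_k$ of Theorem~\ref{thmD:equilibria}, so $\breve g(s,t)\to\breve f_k(s)=U_\star(k\sigma)$ by \qref{eqD:usigma}, i.e. $u_g(\sigma,t)\to U_\star(k\sigma)$. Hence $\liminf_{t\to\infty}u(\sigma,t)\ge U_\star(k\sigma)$ for every $k>0$; letting $k\to\infty$ and using $U_\star(\infty)=1$ from \qref{eq:Usolution} gives $\liminf_{t\to\infty}u(\sigma,t)\ge1$. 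Since $u\le m_0(f(t))$ and, by \qref{eqD:m0ineq}, $m_0(f(t))$ is dominated by the solution of the logistic equation $\dot y=-y^2+y$ with $y(0)=m_0(\finit)>0$, we also get $\limsup_{t\to\infty}m_0(f(t))\le1$; combined with the preceding $\liminf$, this forces both $m_0(f(t))\to1$ and $u(\sigma,t)\to1$ for all $\sigma\in(0,1)$.

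Finally, for each fixed $i$ and any $s>0$, $0\le e^{-is}f_i(t)\le\sum_{j\ge1}e^{-js}f_j(t)=m_0(f(t))-\breve f(s,t)\to 1-1=0$, so $f_i(t)\to0$; together with $m_0(f(t))\to1$ this is the asserted weak convergence to zero. (Equivalently, one may phrase this via Theorem~\ref{th:Btopology} for the discrete measures $\sum_i f_i(t)\delta_i$.)

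I expect the main obstacle to be the comparison principle for the nonlocal equation \qref{eqD:BTueq}: one must run the touching-point argument carefully — essentially as in Lemmas~\ref{lem:cv:wdefup}--\ref{lem:cv:upcmp} and the proof of Proposition~\ref{prop:Ulim} — exploiting that both solutions remain increasing in $\sigma$ so that the averaging term $\frac2\sigma\int_0^\sigma u\,dr$ is order-preserving at a first point of contact. A secondary point requiring care is the explicit verification that the geometric comparison data can be placed below $\breve f(\cdot,0)$ on all of $(0,1)$, which is exactly where the hypothesis $m_1=\infty$, in the form $\D_\sigma u(0^+,0)=\infty$, enters.
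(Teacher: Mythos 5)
Your proof is correct and follows essentially the same route as the paper: change variables to the universal nonlocal equation, bound $u$ from below by a finite-first-moment comparison solution with arbitrarily large first moment $k$, let $k\to\infty$, and close with the logistic upper bound on $m_0$. The only (harmless) difference is that you build the comparison solution inside Model D from geometric initial data and invoke Theorem~\ref{thmD:CVEQ1}, whereas the paper reuses the Model C argument of Theorem~\ref{thm:CVEQ2} with the exponential-density comparison datum; both hinge on the same unproved-in-detail comparison principle for the nonlocal equation, which you correctly identify as the main point requiring care.
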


\begin{remark}
The conclusion means that the total number of groups $m_0(f(t))\to1$,
while the number of groups of any fixed size $i$ tends to zero.  
Thus as time increases, individuals cluster in larger and larger groups,
leaving no groups of finite size in the large-time limit.
\end{remark}

\begin{proof}
We use the change of variables in \qref{eqD:change} and obtain
a solution $u(\sigma,t)$ to \qref{eqD:BTueq} that satisfies 
$\D_\sigma u(0^+,0)=\infty$.  By the same arguments as in
the proof of Theorem~\ref{thm:CVEQ2}, we obtain the analog of
\qref{eqC:liminfU}, namely
\begin{equation}\label{eqD:liminfu}
\liminf_{t\to\infty} u(\sigma,t) \ge 1 \quad\mbox{for all $\sigma\in(0,1)$.}
\end{equation}
We know, though, that $u(\sigma,t)\le m_0(f(t))$ and that 
\[
\limsup_{t\to\infty} m_0(f(t))\le1
\]
due to \qref{eqD:m0ineq}. It follows that $m_0(f(t))\to 1$ and
$\breve f(s,t)\to1$ for all $s>0$.
Therefore the Laplace transform $\sum_{i=1}^\infty e^{-si}f_i(t)\to0$,
and the conclusions of the Theorem follow. 
\end{proof}
\vfil\pagebreak

\part{From discrete to continuous size}
\myrunningheads

\section{Discretization of Model C}\label{secDC:discretization}

In this section, we discuss how  a particular discretization of 
Model C is naturally related to Model D. 
We start from the weak form of Model C expressed in \qref{eq:modelC}. 
Introduce a grid size $h>0$, corresponding to a scaled size increment, 
and introduce the approximation
\begin{equation}
f_i^h \approx \int_{I^h_i} F_t(dx) \,, \qquad I^h_i:=(ih,(i+1)h],
\quad i=0,1,\ldots
\end{equation}
for  the number of clusters with size in the interval $I^h_i$. 
(We find it convenient to not scale this number by the width of $I^h_i$,
for purposes of comparison.)

Then by formal discretiztion of the integrals in \qref{eq:modelC} 
by the left-endpoint rule, we require 
\begin{eqnarray}
&& \hspace{-1cm}
\sum_{i=0}^\infty \varphi(ih) \frac{d f_i^h}{dt} (t) = \sum_{i,j=0}^\infty \Big( \varphi \big( (i+j)h \big) -  \varphi (ih) - \varphi(jh) \Big) \, f_i^h(t) \, f_j^h(t) 
\nonumber \\
&& \hspace{2cm} 
+ \sum_{i=0}^\infty \Big(  -  \varphi (ih)+\frac{2}{i+1} \sum_{j=0}^{i} \varphi(jh) \Big) \, f_i^h(t). 
\label{eqDC:weak}
\end{eqnarray}

{\sl ``Ghost'' clusters.} 
Because of the left-endpoint discretization, the sums in \qref{eqDC:weak}
start with $i,j=0$. Note, however, that if we take
$\varphi(0)=0$ and $\varphi_i=\varphi(ih)$, all terms with $i=0$ or $j=0$ drop,
and Eq.~\qref{eqDC:weak} becomes identical to the weak form of Model D
in \qref{eq:CF2_disc_NiwaC}. Thus, the dynamics of the
sequence $(f^h_i(t))_{i=1}^\infty$ is governed exactly by Model D, 
and is decoupled from the behavior of $f^h_0(t)$.
The equation governing $f^h_0$ corresponds to the coefficient of $\varphi(0)$
in Eq.~\qref{eqDC:weak} and takes the form
\begin{equation}\label{eqDC:f0t}
\D_t f^h_0(t) = - (f^h_0)^2 - 2 f^h_0\sum_{i=1}^\infty f^h_i + f^h_0 + 
2\sum_{i=1}^\infty \frac1{i+1}f^h_i\ .
\end{equation}
We see the behavior of $f^h_0(t)$ is slaved to that of $(f^h_i(t))_{i=1}^\infty$.

In the discrete-size model, $f^h_0$ has the interpretation as number density
of clusters whose size is less than the bin width $h$.  
Merging and splitting interactions with such clusters have a negligible effect upon 
dynamics in the discrete approximation, so we can say these clusters become 
``ghosts.''  
It is sometimes convenient, however, to include them in the tally of total cluster 
numbers, for the following reason:
Taking $\varphi\equiv1$ in \qref{eqDC:weak} we find that the quantity
\[
\hat\nu_0(t):=
m_0\left((f_i^h(t))_{i=0}^\infty\right) = 
\sum_{i=0}^\infty f^h_i(t)
\]
is an exact solution of the logistic equation:
\begin{equation}
\D_t \hat\nu_0(t) = -\hat\nu_0^2+\hat\nu_0.
\end{equation}
(Recall that without the $i=0$ term, we have only the inequality \qref{eqD:m0ineq}.)
Naturally $\hat\nu_0=1$ in equilibrium, which helps to explain 
the convenient choice of $f_0$ in \qref{eqD:f0}: 
The function $G$ in \eqref{d:modelDgen} is the generating function of 
$(f_i)_{i=0}^\infty$, an equilibrium for Model D extended to include ghost clusters.


{\sl Bernstein transform.}
By taking $\varphi(x)=1-e^{-sx}$, we obtain the $h$-scaled Bernstein transform
\begin{equation}
U^h(s,t) = \sum_{i=1}^\infty (1-e^{-shi})f_i^h(t) = \breve F_t^h(s)\,,
\end{equation}
where $F^h_t$ is the discrete measure on the grid $\{ih: i=1,\ldots\}$
formed from the solution $f(t)=(f^h_i(t))_{i=1}^\infty$ of Model D:
\[
F^h_t(dx) = \sum_{i=1}^\infty f^h_i(t) \,\delta_{ih}(dx)\ .
\]
(Ghost clusters would have no effect on $U^h$, and
we do not include them here, in order to focus on how Model D compares to Model C.)
The function $U^h$ satisfies the following scaled variant of 
\qref{eqD:BT1}:
\begin{equation}
\boxed{\D_t U^h(s,t) = -(U^h)^2 - U^h + 2 A_h(U^h)\,,}
\label{eq:BTh}\end{equation}
where the scaled averaging operator
\begin{equation}
A_h(U^h)(s,t) = \frac{h}{1-e^{-sh}} \int_0^s U^h(r,t) e^{-rh}\,dr \ .
\label{def:Ah}
\end{equation}
In the limit $h\to0$, the operator $A_h$ reduces formally to the running average
operator $A_0$ as defined by
\begin{equation}\label{d:A0}
A_0(U)(s) := \frac1s\int_0^s U(r)\,dr\ .
\end{equation}


\section{Limit relations at equilibrium}


At equilibrium, $f^h=(f^h_i)_{i=1}^\infty$ is constant in time, and the zeroth
and first moments satisfy 
\begin{equation}
\nu_h = m_0(F^h) = \sum_{i=1}^\infty f^h_i \,,
\qquad \mu_h = m_1(F^h) = \sum_{i=1}^\infty ih f^h_i\,.
\end{equation}
Because $f^h$ is an equilibrium solution of Model D, from relations
\qref{eqD:m1def}--\qref{eqD:m0m1}  we find  that these moments are related by
  \begin{equation}\label{eq:m0m1hrel}
{  \frac{\nu_h}{(1-\nu_h)^3} = \frac {\mu_h}h \ .}
  \end{equation}
If we consider the rescaled mass $\mu_h$ as fixed,  the leading behavior of $\nu_h$ as $h\to0$ is given by 
\begin{equation}\label{eq:nuasym}
\nu_h \sim 1- (h/\mu_h)^{1/3}\ .
\end{equation}
In these terms, the tail behavior of the Model D equilibrium from Theorem~\ref{thmD:equilibria} can be recast in the form
\begin{equation}\label{eq:asym1}
 \frac1h f_i^h \sim \frac 1{\mu_h}\,\frac9{8\sqrt\pi} 
\left( \frac{ih}{\mu_h} \right)^{-3/2} 
\left(1 + \frac{4}{27}\frac{h}{\mu_h}\right)^{-i+\frac12} 
\,,\quad i\to\infty.
\end{equation}
As $h\to0$ and $i\to\infty$ with $ih\to x$ and $\mu_h\to\mu$, 
the right-hand side converges to 
\begin{equation}\label{eq:asym2}
\frac1\mu \frac9{8\sqrt\pi} \left(\frac x{\mu}\right)^{-3/2} \exp\left(-\frac4{27} \frac x{\mu}\right)\,.
\end{equation}
This is consistent with the large-size asymptotic behavior of the 
solution of Model C with first moment $\mu$ from Theorem~\ref{thm:cont_CM}.

We have the following  rigorous convergence theorem for the  continuum limit of the discrete equilibria. 

\begin{theorem} Let $f^h$ be a family of equilibria of Model D 
and let  $F^h(dx)=\sum_{i=1}^\infty f^h_i\,\delta_{ih}$. 
If $\mu_h=m_1(F^h)\to\mu$ as $h\to0$, then $F^h$ converges
narrowly to $\Feq(dx)=\feq(x)\,dx$ where 
\[
\feq(x) = \frac{1}{\mu} f_\star\left(\frac x{\mu}\right) \,.
\]
\end{theorem}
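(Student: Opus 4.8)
The plan is to pass everything through Bernstein transforms and then invoke the continuity theorem for narrow convergence, Proposition~\ref{prop:narrow}. First I would note that, by the ``ghost cluster'' discussion in Section~\ref{secDC:discretization}, the sequence $(f^h_i)_{i=1}^\infty$ is itself an equilibrium of Model D in its original (unscaled) form, with first moment
\[
\tilde\mu_h := \sum_{i=1}^\infty i\,f^h_i = \frac{\mu_h}{h}\,.
\]
Hence the identity \qref{eqD:usigma} established in the proof of Theorem~\ref{thmD:equilibria} gives $\breve f^h(s) = U_\star\big(\tilde\mu_h(1-e^{-s})\big)$ for the unscaled Bernstein transform of $(f^h_i)$. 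Since the $h$-scaled Bernstein transform of $F^h$ is $\breve F^h(s)=\sum_{i}(1-e^{-shi})f^h_i=\breve f^h(sh)$, this yields the closed formula
\[
\breve F^h(s) = U_\star\!\left(\mu_h\,\frac{1-e^{-sh}}{h}\right),\qquad s\in[0,\infty].
\]

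Next I would let $h\downarrow 0$ in this formula. For each fixed $s\in(0,\infty)$ one has $\tfrac{1-e^{-sh}}{h}\to s$ while $\mu_h\to\mu$, so the argument of $U_\star$ tends to $\mu s$; because $U_\star$ is a Bernstein function, hence continuous on $[0,\infty)$, it follows that $\breve F^h(s)\to U_\star(\mu s)$. The endpoint $s=0$ is trivial, and at $s=\infty$ we have $\breve F^h(\infty)=m_0(F^h)=U_\star(\mu_h/h)\to U_\star(\infty)=1$, using that $\mu_h/h\to\infty$ (which is where the hypothesis $\mu>0$ enters) and $U_\star(s)\to1$ as $s\to\infty$ by \qref{eq:Usolution}; this matches $\lim_{s\to\infty}U_\star(\mu s)=1=m_0(f_\star)$ from \qref{eq:cont_CM_4_1}. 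On the other hand, by the dilation property of the Bernstein transform together with the identity $U_\star=\breve f_\star$ from \qref{eq:cont_CM_14}, the function $s\mapsto U_\star(\mu s)$ is precisely the Bernstein transform of $\feq(x)=\tfrac1\mu f_\star(x/\mu)$. Thus $\breve F^h(s)\to\breve\Feq(s)$ for every $s\in[0,\infty]$, and Proposition~\ref{prop:narrow} delivers $F^h\nto\Feq$ narrowly (arguing, if one insists, along an arbitrary sequence $h_n\downarrow 0$, to which Proposition~\ref{prop:narrow} applies verbatim).

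I do not expect a genuine obstacle here: the substantive work has already been carried out in Theorem~\ref{thmD:equilibria}, which supplies the explicit equilibrium Bernstein transform, and in Proposition~\ref{prop:narrow}, which converts pointwise convergence of Bernstein transforms into narrow convergence of measures. The only points that need care are keeping straight the two length scales --- the bin width $h$ versus the unscaled first moment $\tilde\mu_h=\mu_h/h$ --- and checking convergence at $s=\infty$, i.e.\ that the zeroth moments converge, $m_0(F^h)\to m_0(\Feq)=1$, which is exactly the step that uses $\mu>0$.
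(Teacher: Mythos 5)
Your proposal is correct and follows essentially the same route as the paper: derive the closed formula $\breve F^h(s)=U_\star\bigl(\mu_h\,\tfrac{1-e^{-sh}}{h}\bigr)$ from the equilibrium identity \qref{eqD:usigma}, pass to the limit pointwise for every $s\in[0,\infty]$, and conclude by Proposition~\ref{prop:narrow}. The only difference is that you spell out the intermediate steps (the rescaling $\tilde\mu_h=\mu_h/h$, continuity of $U_\star$, and the endpoint $s=\infty$) that the paper leaves implicit.
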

\begin{proof} 
The Bernstein transform of the scaled discrete size distribution 
$F^h(dx)=\sum_{i=1}^\infty f^h_i\,\delta_{ih}$ has the following representation,
due to \qref{eqD:usigma}:
\begin{equation}\label{eqDC:lim1}
\breve F^h(s) = \sum_{i=1}^\infty (1-e^{-sih})f_i^h = 
U_\star\left(\mu_h \frac{1-e^{-sh}}h\right)\,.
\end{equation}
As $h\to0$, from \qref{eqDC:lim1} we have $\breve F^h(s)\to \breve F_{\rm eq}(s)=U_\star(\mu s)$, for every $s\in[0,\infty]$. Now the result follows from
Proposition~\ref{prop:narrow}.
\end{proof}

\begin{figure}[b!t]
    \begin{center}
        \includegraphics[trim=70 320 30 80, clip,width=\textwidth]{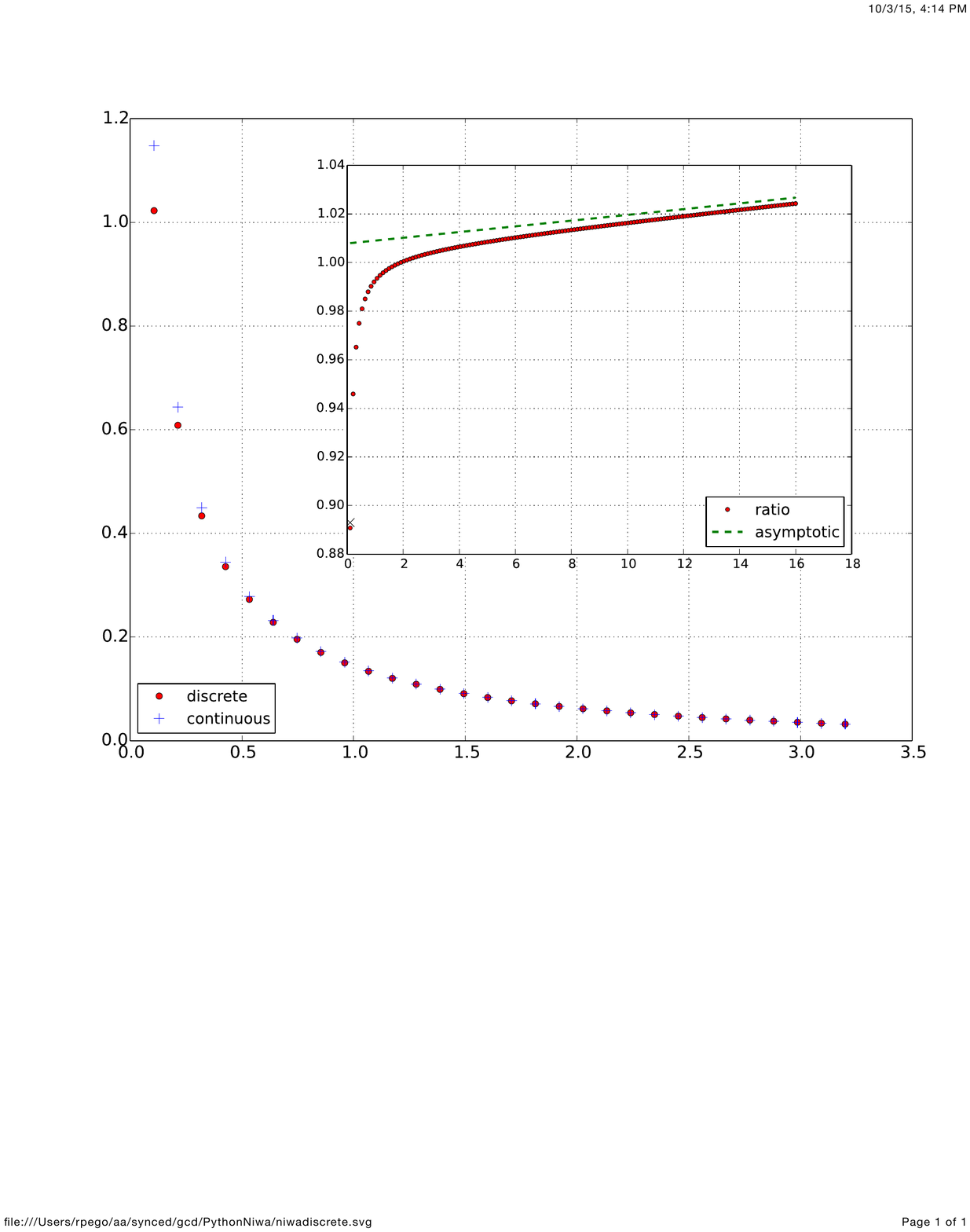}
\caption{Discrete  and continuous profiles: $f^h_i/h$ and $f_\star(ih)$  vs. $ih$ for 
$\nu_h=0.6$, $\mu_h=1$, $h=0.10667$.
Inset: Ratio $( f^h_i/h)/f_\star(ih)$ and asymptotics in \qref{eq:asym3} vs. $ih$.
Cross in inset at $(h,\frac13\Gamma(\frac13))\approx(0.1,0.893)$, see \eqref{eq:h-ratio}.}
    \label{fig:discrete}
    \end{center}
\end{figure}
See Figure~2 for a comparison of the discrete and continuous profiles
$f^h_i/h$ and $f_\star(ih)$, for the parameters $\nu_h=0.6$, $\mu_h=\mu=1$,
corresponding to $h=0.1066\bar6$.
In the inset we compare the ratio of these profiles with the asymptotic expression
coming from \qref{eq:asym1}--\qref{eq:asym2}, 
\begin{equation}\label{eq:asym3}
\frac{f^h_i/h}{f_\star(ih)} \sim  \exp\left(\frac{4ih}{27} \right)
\left(1 + \frac{4h}{27}\right)^{-i+\frac12} \,,
\end{equation}
which reflects the different exponential decay rates for the discrete and
continuous profiles.  The discrete profile also exhibits a transient behavior for
small $i$, starting from the value 
\begin{equation}
\frac1h f^h_1 = \frac{\nu_h-\nu_h^2}{1+2\nu_h} \,\frac{\nu_h}{(1-\nu_h)^3}
\sim \frac13 h^{-2/3}\,,
\end{equation}
coming from \eqref{e:fjrec}, \eqref{eq:m0m1hrel} and \eqref{eq:nuasym}. 
The asymptotic value of the ratio
\begin{equation}\label{eq:h-ratio}
(f^h_1/h)/f_\star(h) \sim \frac13\Gamma\left(\frac13\right)
\end{equation} 
(with $f_\star(h)$ approximated by \eqref{eq:cont_CM_2}-\eqref{eq:cont_CM_3} in Theorem~\ref{thm:cont_CM})
is plotted as a cross at the point $(h,\frac13\Gamma(\frac13))$ in the inset.



\section{Discrete-to-continuum limit}
\label{sec:disc2cont}

We can rigorously prove a weak-convergence result for time-dependent solutions
 of Model D to solutions of Model C, as follows.
\begin{theorem}
Let $F$ be a solution of Model C with initial data $F_0$ a finite measure on $(0,\infty)$,
and let $f^h$, $h\in I$ be solutions of Model D with initial data that satisfy 
$F^h_0\to F_0$ narrowly as $h\to0$.
Then for each $t>0$, we have $F^h_t\to F_t$ narrowly as $h\to0$.
\end{theorem}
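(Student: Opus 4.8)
The plan is to reduce everything to the Bernstein transforms and run a Gronwall estimate in the sup-norm over $s$, closely following the uniqueness argument for Model C alluded to at the end of Section~\ref{sec:existenceC}. Write $U^h(s,t)=\breve F^h_t(s)$, which solves the scaled equation \qref{eq:BTh} with averaging operator $A_h$ from \qref{def:Ah}, and $U(s,t)=\breve F_t(s)$, which solves \qref{eqC:Bernstein}, that is, \qref{eq:BTh} with $A_h$ replaced by $A_0$ from \qref{d:A0}. Since $F^h_0\to F_0$ narrowly, Proposition~\ref{prop:narrow} gives $N^h(0):=\sup_{s\in(0,\infty]}|U^h(s,0)-U(s,0)|\to0$; in particular $m_0(F^h_0)\to m_0(F_0)$, so by \qref{eqD:m0bound} the moments $m_0(F^h_t)\le m_0(F^h_0)+1$ are bounded uniformly in (small) $h$ and in $t\in[0,T]$, whence $0\le U^h,U\le C$ on $(0,\infty]\times[0,T]$.

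Set $\omega^h=U^h-U$ and $N^h(t)=\sup_{s\in(0,\infty]}|\omega^h(s,t)|$. Subtracting the two equations,
\[
\partial_t\omega^h=-(U^h+U+1)\,\omega^h+2A_h(\omega^h)+2\bigl(A_h(U)-A_0(U)\bigr),
\]
where the displayed identity holds also at $s=\infty$, with $A_h(\omega^h)(\infty,t)=h\int_0^\infty\omega^h(r,t)e^{-rh}\,dr$. The structural point is that $A_h$ is nonexpansive in the sup-norm: since $A_h(\omega^h)(s,t)=\tfrac{h}{1-e^{-sh}}\int_0^s\omega^h(r,t)e^{-rh}\,dr$ and $\tfrac{h}{1-e^{-sh}}\int_0^s e^{-rh}\,dr=1$, one gets $|A_h(\omega^h)(s,t)|\le N^h(t)$ for every $s\in(0,\infty]$. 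Putting $\epsilon^h(t):=2\sup_{s}|A_h(U)(s,t)-A_0(U)(s,t)|$, the displayed equation yields the pointwise bound $\partial_t|\omega^h(s,t)|\le(2C+3)N^h(t)+\epsilon^h(t)$, hence $N^h(t)\le N^h(0)+\int_0^t\bigl((2C+3)N^h(\tau)+\epsilon^h(\tau)\bigr)\,d\tau$, and Gronwall gives $N^h(t)\le e^{(2C+3)t}\bigl(N^h(0)+\int_0^t\epsilon^h(\tau)\,d\tau\bigr)$. Since $N^h(0)\to0$, once we show $\int_0^T\epsilon^h(\tau)\,d\tau\to0$ we obtain $N^h(t)\to0$ for each $t$, i.e.\ $\breve F^h_t\to\breve F_t$ uniformly on $(0,\infty]$, and then Proposition~\ref{prop:narrow} gives $F^h_t\to F_t$ narrowly, which is the claim.

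The main obstacle is the consistency estimate $\sup_{t\le T}\epsilon^h(t)\to0$ as $h\to0$. For $s\le S$ an elementary estimate gives $|A_h(U)(s,t)-A_0(U)(s,t)|\le hs\,\|U(\cdot,t)\|_\infty\le hSC$, which is harmless. For $s\ge S$ one uses that $s\mapsto A_h(U)(s,t)$ and $s\mapsto A_0(U)(s,t)$ are nondecreasing in $s$, bounded above by $m_0(F_t)$, with $A_0(U)(s,t)\uparrow m_0(F_t)$ as $s\to\infty$; combined with the $s=S$ bound just noted, the difference on $[S,\infty]$ is at most $\bigl(m_0(F_t)-A_0(U)(S,t)\bigr)+hSC$. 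Since $m_0(F_t)-A_0(U)(S,t)=\int_0^\infty\tfrac{1-e^{-Sx}}{Sx}\,F_t(dx)$, it suffices to know this tends to $0$ as $S\to\infty$ uniformly in $t\in[0,T]$, which follows from $\sup_{t\le T}F_t((0,\delta])\to0$ as $\delta\to0$; this in turn holds because $t\mapsto F_t$ is narrowly continuous on $[0,T]$ (Theorem~\ref{thm:EU-modelC}) with values in $\calM_+(0,\infty)$, so a putative failure would force a narrow limit $F_{t_\ast}$ to carry an atom at $0$. Choosing $S$ large and then $h$ small drives $\epsilon^h$ uniformly to $0$. This tightness-near-the-origin fact about the limit solution — equivalently, quantitative control of $m_0(F^h_t)\to m_0(F_t)$, reflecting the vanishing in the continuum limit of the ``mass-to-dust'' / ghost-cluster flux identified in Section~\ref{secDC:discretization} — is the one ingredient genuinely beyond the Model C uniqueness proof.
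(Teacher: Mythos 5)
Your proof is correct, and its skeleton --- pass to the Bernstein transforms, subtract \qref{eq:BTh} from \qref{eqC:Bernstein}, exploit that $A_h$ is an average (hence nonexpansive in the sup norm), run a Gronwall estimate, and convert back to narrow convergence via Proposition~\ref{prop:narrow} --- is the same as the paper's. Where you genuinely diverge is in the treatment of large $s$ and of the endpoint $s=\infty$. The paper works with the running supremum $\Omega(s,t)=\sup_{0<r\le s}|U-U^h|(r,t)$ for each \emph{fixed finite} $s$: since $A_h$ only samples its argument on $(0,s)$, the Gronwall loop closes at that fixed $s$, and the consistency error is bounded by $U(s,t)\,\eta_0(sh)$, which vanishes trivially as $h\to0$ because $sh\to0$. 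The zeroth moments ($s=\infty$) are then handled by a separate comparison of the logistic equation/inequality satisfied by $m_0(F_t)$ and $m_0(F^h_t)$, combined with monotonicity of $U^h$ in $s$. You instead take a single supremum over all $s\in(0,\infty]$, which forces you to prove the consistency estimate $\sup_s|A_h(U)-A_0(U)|\to0$ uniformly on $[0,T]$; you correctly isolate that this requires $\sup_{t\le T}F_t((0,\delta])\to0$ as $\delta\to0$, and your derivation of that from the narrow continuity of $t\mapsto F_t$ in $\calM_+(0,\infty)$ (compactness of the orbit plus Prokhorov-type tightness) is sound, as is your verification that the transformed equations and the nonexpansivity of $A_h$ extend to $s=\infty$. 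Both routes are valid: the paper's localization in $s$ avoids any tightness input and makes the consistency step immediate, while your version delivers uniform convergence of $\breve F^h_t$ on $(0,\infty]$ in one stroke (which, by Proposition~\ref{prop:narrow}, is in any case equivalent to the narrow convergence obtained either way), at the cost of the extra tightness lemma that you rightly flag as the one ingredient beyond the Model C uniqueness argument.
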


\begin{proof}[Proof of the Theorem]
1. The Bernstein functions $U^h=\breve F^h_t$ and $U=\breve F_t$ satisfy
\qref{eq:BTh} and \qref{eqC:Bernstein} respectively.
For $h$ small enough, these functions are 
uniformly bounded globally in time by $C_0=m_0(F_0)+2$, due to the
fact that $m_0(F_0^h)\to m_0(F_0)$ and the bounds 
coming from  \qref{eq:m0evolve} and \qref{eqD:m0bound}. 

2. Next we let
\[
\omega(s,t)=(U-U^h)(s,t), \qquad
\Omega(s,t)=\sup_{0<r\le s} |(U-U^h)(r,t)| \,,
\]
and compute
\begin{equation}\label{eqD:omegat}
\D_t\omega(s,t)=-\omega(U+U^h+1) + 2A_h(\omega)+2(A_0(U)-A_h(U))\,.
\end{equation}
Observe that $|A_h(\omega)(s,t)|\le \Omega(s,t)$, and that
\begin{eqnarray*}
A_0(U)-A_h(U) &=& \frac1s\int_0^s U(r,t)
\left(1- \frac{she^{-rh}}{1-e^{-sh}}\right)\,dr
\end{eqnarray*}
satisfies the bound
\begin{equation}
|A_0(U)-A_h(U)| \le U(s,t) \eta_0(sh), 
\end{equation}
where
\begin{equation}
\eta_0(s) := \sup_{0<r<s} \left|1- \frac{s}{1-e^{-s}} e^{-r} \right| 
\underset{s\to0}\to 0\,.
\end{equation}
Now we multiply Eq.~\qref{eqD:omegat} by $2\omega$ and integrate in time
to obtain
\begin{align}
\omega(s,t)^2 -\omega(s,0)^2
& \le  \int_0^t 4\omega(s,\tau) (\Omega(s,\tau) + C_0 \eta_0(sh) )\,d\tau
\nonumber \\
& \le \int_0^t 8 \Omega(s,\tau)^2 \,d\tau +  t C_0^2\eta(sh)^2 \,.
\end{align}
Taking the sup over $s\in[0,\hat s]$ and using a standard Gronwall argument
we infer that for any $s\in(0,\infty)$ and $t>0$,
\begin{equation}
\sup_{0<r<s} |(U-U^h)(r,t)|=\Omega(s,t) \le e^{4t}\left( \Omega(s,0)+\sqrt{t} C_0\eta(sh)\right) \to 0
\label{eqD:gronwall}
\end{equation}
as $h\to0$.

3. We separately study the case $s=\infty$, writing 
\[ 
\hat m^h(t)=m_0(F^h_t)=U^h(\infty,t),\qquad \hat m(t)=m_0(F_t)= U(\infty,t).
\]
Due to \qref{eq:BTh} and \qref{eq:m0evolve}, these moments satisfy
\begin{equation}\label{eqD:hatmt}
\D_t \hat m^h(t) \le -(\hat m^h)^2+\hat m^h \,,
\qquad
 \D_t \hat m(t) = -(\hat m)^2+\hat m \,.
\end{equation}
By the assumption that initial data converge narrowly, we have 
$\hat m^h(0)\to \hat m(0)$, and it is not difficult to  deduce that for each $t>0$,
\[
\limsup_{h\to0} \hat m^h(t) \le \hat m(t)\,.
\]
Now, however, by the monotonicity of $s\mapsto U^h(s,t)$, we have
\[
U(s,t)=\lim_{h\to0} U^h(s,t) \le \liminf_{h\to0} \hat m^h(t)\,.
\]
Because we know from Theorem~\ref{thm:EU-modelC} that 
$\hat m(t)=\lim_{s\to\infty}U(s,t)$, we deduce 
$\hat m^h(t)\to \hat m(t)$ as $h\to0$. 

The narrow convergence $F^h_t\to F_t$ now follows from Proposition~\ref{prop:narrow}.
\end{proof}

\begin{remark}
Uniqueness for Model C is proved by a simple Gronwall estimate analogous 
to \qref{eqD:gronwall}. We omit details.
\end{remark}
%

\hide{  
\bigskip
\hrule
\bigskip

\subsection{Generating function of Model D equilbria}

The properties of the equilibrium sequence $\fheq$ shall be derived from the 
behavior of the {\sl generating function}
\begin{equation} \label{d:modelDgen}
G(x) := \sum_{n=0}^\infty \fheqi x^i \, 
\end{equation} 
where we find it convenient to define 
\begin{equation}
 f_{\rm eq\,0}^h = 1-m_0^h \ .
\end{equation}
Nonnegativity, and indeed complete monotonicity, of the sequence $(f_n^h)_{n\ge0}$
will be derived from the main results of \cite{LP2016}, which provide a discrete analog of 
the representation theorem \ref{thm:CBF} above, taken from \cite{Schilling_etal_Bernstein}.
The decay rate of the equilibrium sequence $(f_n^h)$ shall be deduced from the derivative
of $F(x)$ using Tauberian arguments, as developed in the book of 
Flajolet and Sedgewick~\cite{FlajoletS}.

Note from (\ref{eq:FUrel}) and \qref{eq:m0modelD} we have
\[
1-F(e^{-sh}) = \breve F(s) =  U(m_1^h z) \,,
\]
therefore, with $x=e^{-sh} = 1-hz$ we have
\begin{equation}\label{eq:FFUrel}
\boxed{F(x) = 1 - U((1-x)m_1^h/h) = B_3((x-1)m_1^h/h)\,.}
\end{equation}

{\sl Complete monotonicity.}  As is proved in Lemma 3 of \cite{LP2016}, $B_3$ is a 
Pick function analytic and nonnegative on $(-\infty, \frac4{27})$. 
Consequently, $F$ is a Pick function analytic and nonnegative on $(-\infty,\hat x)$
where
\[
\hat x = 1+ \frac{4h}{27m_1^h}\,.
\]
As a direct consequence of Corollary 1 of \cite{LP2016}, we infer that the sequence
$(f_n^h\hat x^n)_{n\ge0}$ is a completely monotone sequence. [A definition of this
needs to be recalled above.]  Because the sequence $(\hat x^{-n})$ is completely monotone,
it follows that $(f_n^h)_{n\ge0}$ is completely monotone, since it is the pointwise
product of two completely monotone sequences.

{\sl Asymptotics.}
Recall that $U(z)$ has a branch point at $z=-\frac4{27}$, with 
$U'(z-\frac4{27}) \sim \frac98 z^{-1/2}$.
The generating function $F(x)$ has a corresponding branch point at $x=\hat x$.

We rescale, replacing $x$ by $\hat x x$ to write
\[
\sum_{n=0}^\infty f_n^h \hat x^n  x^n = F(\hat x x) 
= 1 - U\left(\frac{m_1^h}h (1-\hat xx)\right) \,.
\]
Then differentiate, writing $\hat k = \hat x m_1^h/h$,
 to find that
\begin{align}
\sum_{n=1}^\infty n f_n^h\hat x^n x^{n-1} 
&= \hat k\, U'\left((1-\hat xx)\frac{m_1^h}h\right) 
 = \hat k\, U'\left(\hat k(1-x) -\frac4{27} \right) 
  \sim \frac{9\hat k^{1/2}}8 (1-x)^{-1/2} 
\label{eq:diffF1}
\end{align}
By Corollary VI.1 from \cite{FlajoletS} we deduce that as $n\to\infty$,
\[
n f_n^h \hat x^n \sim 
\frac{9\hat k^{1/2}}8 \frac{(n-1)^{-1/2}}{\Gamma(1/2)} 
\sim
\frac{9\hat k^{1/2}}8 \frac{n^{-1/2}}{\Gamma(1/2)},
\]
or, since $\Gamma(1/2)=\sqrt\pi$,

\begin{equation}\label{eq:fnasymptotics}
\boxed{f_n^h \sim \frac98 \left(\frac{m_1^h \hat x}{h\pi}\right)^{1/2} n^{-3/2} \hat x^{-n} \ ,
\qquad \hat x = 1+ \frac{4h}{27m_1^h}\,.
}\end{equation}


\bigskip
\hrule
\bigskip
[Remark: The zeroth moment satisfies
\[\frac{m_0}{(1-m_0)^3} = \mu. \]
Thus $m_0(f_\mu)$ is given in terms of $B_3$. ]

Recall $\breve F(s)$ denotes the equilibrium solution of \qref{eq:BTh}, satisfying
\begin{equation}
\breve F(s) = \sum_{i=1}^\infty (1-e^{-sih})f_i^h\ .
\end{equation}
This function
is related to a steady state solution $U(Cz)$ for equation \qref{eq:Uint1} 
of the continuous-size model C as follows.  This solution satisfies
\begin{equation}
0 = -U(Cz)^2 - U(Cz) + \frac2z \int_0^z U(Cy)\,dy
\end{equation}
Transform this via
\[
\breve F(s) = U(Cz), \quad 
z= \frac{1-e^{-sh}}h, \quad y = \frac{1-e^{-rh}}h, \quad dy = e^{-rh}\,dr
\]
to find that indeed $\breve F(s)$ is a steady solution of \qref{eq:BTh}.
Because $U'(0)=1$ the mass of the discrete solution is given by 
\begin{equation}\label{eq:m0m1h}
 m_1^h = \sum_{n=1}^\infty nh f_n^h = \D_s \breve F(0) = C \ .
\end{equation}
 Thus we find the mass determines the solution via
 \begin{equation}\label{eq:FUrel}
 \breve F(s) = U(m_1^h z) \ , \qquad z= \frac{1-e^{-sh}}h \ .
 \end{equation}
 The explicit solution is available from \qref{eq:cont_Niwa_equi_3} 
 since $U=\breve f_\star$.

}  

\hide{ 

\vfil
\pagebreak

\section{Global existence as in Jian-Guo's address to Jabin.}

\begin{theorem}
Assume initial measure satisfy $ \int_{0}^{\infty}(x\wedge1)dF_{0}(x)<\infty$. 
Then there is a global measure solution $dF_{t}(x)$ weakly continuous in time and 
satisfies
\[
  \int_{0}^{\infty}(x\wedge1)dF_{t}(x) < \infty, \quad \forall t > 0
\]
Furthermore,
\begin{itemize}
\item If  $m_{1}= \int_{0}^{\infty} x dF_{0}(x)<\infty$, then
for any $t>0$, we have
$\int_{0}^{\infty} x dF_{t}(x) \equiv  m_{1}$ and  $\int_{0}^{\infty} dF_{t}(x)<\infty$
\item If $dF_{0}(x)= f_{0}(x)dx$ and $f_{0}(x)$ is a completely monotone function, then for any $t > 0$, $dF_{t}(x)= f_{t}(x)dx$
and  $f_{t}(x)$ is also a completely monotone function.
\end{itemize}
\end{theorem}  

\bigskip
\hrule
\bigskip

\begin{itemize}
\item  Assume initial measure satisfy $ \int_{0}^{\infty}(x\wedge1)dF_{0}(x)<\infty$. Define 
$U_{0}(s) = \int_{0}^{\infty}(1-e^{-sx})dF_{0}(x)$. It is Bernstein and $U_0(0)=0$.
\item For simplest, we assume $m_1=\int_{0}^{\infty} x\, dF_{0}(x)=1$ 
and $m_0=\int_{0}^{\infty} dF_{0}(x) < \infty$. Hence $U_0'(0)=1$
and $U_{0}(s)$ is bounded.
\item  Implicit-explicit scheme from initial measure $dF_{0}(x)$,
\begin{eqnarray*}
&&\hspace{-1.5cm}
\int_{{\mathbb R}_+} \varphi(x) \, (dF_{n+1}(x) - dF_{n}(x))/ \Delta t
  \\
&&\hspace{-.5cm}
= \int_{({\mathbb R}_+)^2} \big( \varphi (x+y) - \varphi(x) - \varphi(y) \big)  \, dF_{n+1}(x) \, \, dF_{n+1}(y)  \\
&&\hspace{-.4cm}
+ \int_{{\mathbb R}_+}  \frac2x \int_0^x  \varphi(y)  \, dy \, dF_{n}(x) -  \int_{{\mathbb R}_+} \varphi(x) \, dF_{n}(x)
\end{eqnarray*}
\end{itemize}

\bigskip
\hrule
\bigskip

\begin{itemize}
\item
Set $\varphi(x)=1-e^{-sx}$, the above scheme reduced to
\[
U_{n+1}(s) + \Delta t \big( U_{n+1}(s) \big)^2
=(1-\Delta t) U_{n}(s)   +  2\Delta t \int_0^1 U_{n}(s\tau)\, d\tau 
\]
\item
The Implicit-explicit scheme is well defined: 
\begin{itemize}
\item
Set of Bernstein function is a convex cone. Hence the intermediate step 
$U_{*}(s) : =(1-\Delta t) U_{n}(s)  + 2\Delta t  \int_0^s U_{n}(s\tau)\, d\tau$ is Bernstein
\item
Let $g(x) = x + \Delta t\, x^{2}$. $g^{-1}$ is Bernstein. Composition of Bernstein functions is Bernstein.
Hence $U_{n+1} = g^{-1} (U_{*})$ is Bernstein
\item From the scheme, it is easy to verify that $U_{n+1} (0)=0$ and $U_{n}(s) \le e^{n\Delta t} U_{0}(s)$, for all $s>0$.
As a result, there is unique measure $dF_{n+1}(x)$, such that
\[
   U_{n+1} (s) = \int_{0}^{\infty} (1-e^{-sx}) dF_{n+1}(x), \quad \int_{0}^{\infty} dF_{n+1}(x) < \infty
\]
One can show that $dF_{n+1}(x)$ solves the implicit-explicit scheme for all test function $\varphi$
\end{itemize}
\end{itemize}

\bigskip
\hrule
\bigskip

\begin{itemize}
\item
Directly verify that  $U_{n+1}'(0) =1$. By completely monotonicity of $U_{n+1}'$, One has $0\le U_{n+1}'\le 1$. 
We can take limit $\Delta t \to 0$.
\item The set of Bernstein function is closed under point-wise limits.
\item If $U_{0}(s)$ is completely Bernstein, then If $U_{n}(s)$ is also completely Bernstein and the $\Delta t\to 0$ limit $U_{t}(s)$
is also completely Bernstein. In other words, if $dF_{0}(x)= f_{0}(x)dx$ and $f_{0}(x)$ is complete monotone function, then for any $t > 0$, $dF_{t}(x)= f_{t}(x)dx$
and  $f_{0}(x)$ is complete monotone function.
\item 
Iyer-Leger-Pego 2013 used this technique to study branching process.
\end{itemize}

\bigskip
\hrule
\bigskip

\bigskip
\hrule
\bigskip

}  


\hide{  
\vfil
\pagebreak

\hrule
\bigskip
\begin{center} {\LARGE EXCESS --- TO OMIT} \end{center}

\subsection{Scaling symmetries}

For both models D and C, we can scale time and amplitude to set $p=q=1$. 
In addition, model C has a dilation symmetry. [describe]

\subsubsection{Scale invariance and reduction for model C}
\label{subsec_scale_invar}

In this section, we consider the continuous case, described by Eq. (\ref{eq:CF2_Niwa}). 
Let $k \in {\N}$ and $f$: $x \in {\mathbb R}_+ \mapsto f(x) \in {\mathbb R}_+$. We denote by $m_k(f)$ the moments 
\begin{eqnarray}
&&\hspace{-1cm}
m_k(f) = \int_{x \in {\mathbb R}_+} x^k\, f(x) \, dx.
\label{eq:Cont_mk}
\end{eqnarray}
Let $f_0$ be an initial condition such that $m_1(f_0) < \infty$ and $f(t)$ be the solution of (\ref{eq:CF2_Niwa}) with initial condition $f_0$, supposing that it exists. Then, by (\ref{eq:CF_mass}), we know that $m_1(f(t))$ is formally conserved, i.e. 
\begin{eqnarray}
&&\hspace{-1cm}
m_1(f(t)) = m_1(f_0) := m_1.
\label{eq:Cont_m1}
\end{eqnarray}

Our first observation is that, without loss of generality, we can assume $p=1$, $q=1$, $m_1 = 1$. More precisely, we have the following proposition: 

\begin{proposition}
Let $f_0$: $x \in {\mathbb R}_+ \mapsto f_0(x) \in {\mathbb R}_+$ be an initial condition for  (\ref{eq:CF2_Niwa}) such that $m_1(f_0) = m_1 <\infty$ and let $f_{p,q}(t)$ be the solution of (\ref{eq:CF2_Niwa}) with parameters $p$ and $q$ and with initial condition $f_0$, supposing that it exists. Then, we have
\begin{eqnarray}
&&\hspace{-1cm}
f_{p,q}(t) = \frac{p^2}{m_1 q^2} \,  f_{1,1} \Big(\frac{p}{m_1 q} \, x, \frac{p^3}{m_1^2 q^2} \, t \Big), 
\label{eq:Cont_scale_invar}
\end{eqnarray}
with $f_{1,1}(x,t)$ the solution of (\ref{eq:CF2_Niwa}) with parameters $p=1$ and $q=1$ associated with the initial condition $\tilde f_0$ such that 
\begin{eqnarray}
&&\hspace{-1cm}
f_0 = \frac{p^2}{m_1 q^2} \tilde f_0(\frac{p}{m_1 q} x). 
\label{eq:Cont_SI_0}
\end{eqnarray}
Additionally, we have 
\begin{eqnarray}
&&\hspace{-1cm}
m_1(f_{1,1}(t)) = m_1(\tilde f_0) := 1.
\label{eq:Cont_m1_tilde}
\end{eqnarray}
\label{prop:cont_scale_invar}
\end{proposition}

\medskip
\noindent
{\bf Proof.} Let $f$ be a solution of (\ref{eq:CF2_Niwa}) with parameters $p$ and $q$. Let $\alpha >0$, $\beta >0$. We introduce the change of unknowns:
\begin{eqnarray}
(x,t) = c \alpha^2 \tilde f (\tilde x, \tilde t), \qquad \tilde x = \alpha x, \qquad \tilde t = \beta t . 
\label{eq:cont_transform}
\end{eqnarray}
Introducing this change of variables into (\ref{eq:CF2_Niwa}), we get that $\tilde f (\tilde x, \tilde t)$ satisfies 
\begin{eqnarray}
&&\hspace{-1.5cm}
\beta \frac{\partial \tilde f}{\partial \tilde t}(\tilde x, \tilde t) = \tilde Q_c(\tilde f)(\tilde x,\tilde t) - \tilde Q_f(\tilde f)(\tilde x,\tilde t) ,
\label{eq:CF3_Niwa_tilde}\\
&&\hspace{-1.5cm}
\tilde Q_c(\tilde f)(\tilde x,\tilde t) = c q \alpha^3 \left( \int_0^{\tilde x} \, \tilde f(\tilde y,\tilde t) \, \tilde f(\tilde x-\tilde y,\tilde t) \, d\tilde y - 2  \, \tilde f(\tilde x,\tilde t) \, \int_0^\infty \tilde f(\tilde y,\tilde t) \, d\tilde y \right) , 
\label{eq:CF4_Niwa_tilde} \\
&&\hspace{-1.5cm}
\tilde Q_f(\tilde f)(\tilde x,\tilde t) = p \alpha^2 \left( \tilde f(\tilde x,\tilde t)  - 2 \int_{\tilde x}^\infty \frac{\tilde f(\tilde y,\tilde t)}{\tilde y} \, d\tilde y \right). 
\label{eq:CF5_Niwa_tilde}
\end{eqnarray}
By taking 
\begin{eqnarray}
&&\hspace{-1.5cm} 
\beta = c q \alpha^3 = p \alpha^2,
\label{eq:cond_alpha_beta}
\end{eqnarray}
we get that $\tilde f$ is a solution of (\ref{eq:CF2_Niwa}) with parameters $p=1$ and $q=1$. Additionally, transformation (\ref{eq:cont_transform}) satisfies:
$$ m_1 = m_1(f(t))  =  c m_1(\tilde f ( \tilde t)). $$
Therefore, by choosing 
\begin{eqnarray}
&&\hspace{-1.5cm} 
c = m_1, 
\label{eq:cond_m1}
\end{eqnarray}
By collecting (\ref{eq:cond_alpha_beta}) and (\ref{eq:cond_m1}), we see that $\alpha$, $\beta$ and $c$ must satisfy
$$c=m_1, \qquad \alpha = \frac{p}{m_1 \, q}, \qquad \beta = \frac{p^3}{m_1^2 \, q^2}. $$
Inserting this choice into (\ref{eq:cont_transform}) leads to the conclusion of the proposition. \endproof

Thanks to this proposition, in the remainder of this section, we restrict to the case $p=1$, $q=1$, $m_1 = 1$. Therefore, we are now concerned with problem 
\begin{eqnarray}
&&\hspace{-1.5cm}
\frac{\partial f}{\partial t}(x,t) = Q_c(f)(x,t) - Q_f(f)(x,t) ,
\label{eq:CF3_Niwa_11}\\
&&\hspace{-1.5cm}
Q_c(f)(x,t) = \int_0^x \, f(y,t) \, f(x-y,t) \, dy - 2  \, f(x,t) \, \int_0^\infty f(y,t) \, dy , 
\label{eq:CF4_Niwa_11} \\
&&\hspace{-1.5cm}
Q_f(f)(x,t) = f(x,t)  - 2 \int_x^\infty \frac{f(y,t)}{y} \, dy . 
\label{eq:CF5_Niwa_11}\\
&&\hspace{-1.5cm}
m_1(f(t)) = \int_0^\infty x \, f(x,t) \, dx = 1, \quad \forall t \in [0,\infty). 
\label{eq:norm_11}
\end{eqnarray}
In weak form, this problem has expression:
\begin{eqnarray}
&&\hspace{-1cm}
\frac{d}{dt} \int_{{\mathbb R}_+} \varphi(x) \, f(x,t) \, dx = 
\int_{({\mathbb R}_+)^2} \big( \varphi (x+y) - \varphi(x) - \varphi(y) \big) \, f(x,t) \, f(y,t)  \, dx \, dy \nonumber \\
&&\hspace{4.5cm}
+ \int_{{\mathbb R}_+} f(x,t) \, \, \Big( \frac{2}{x} \int_0^x \varphi(y) \, dy -  \varphi (x)\Big) \, dx . 
\label{eq:CF2_11}
\end{eqnarray}

\subsubsection{Scaling invariance for model D}

\bigskip
\hrule
\bigskip
\subsection{A specific example motivated by animal group size dynamics}
\label{subsec:CF_anim}

The present work is motivated by a model proposed by Niwa \cite{Niwa-JTB1998, Niwa-JTB2003}
in fisheries science.  Niwa carried out simulations of a simple aggregation-breakup process which 
corresponds to discrete coagulation-fragmentation dynamics with rate coefficients given by 
 \begin{eqnarray}
&&\hspace{-1cm}
a_{i,j} = 2q, \qquad b_{i,j} = \frac{2p}{i+j-1}\,.\label{eq:rates_niwa_disc} 
\end{eqnarray}
With these coefficients, the coagulation rate $2q$ is independent of cluster sizes.
Moreover, the overall fragmentation rate for clusters of size $i$, 
given by $\sum_{j=1}^{i-1}b_{j,i-j}=2p$, is constant and independent of $i$, 
and these clusters break into pairs with sizes $(1,i-1)$, $(2,i-2),\ldots(i-1,1)$ 
with equal probability.
The coagulation-fragmentation system with the rates \qref{eq:rates_niwa_disc} 
were explicitly written and studied by Ma et al.~\cite{Ma_etal_JTB11}. 
[Explain/justify why we call this Niwa's model.]

Coagulation-fragmentation models of animal group size 
had been proposed and investigated earlier by Gueron and Levin [cite], 
who considered rate coefficients that admit equilibria that satisfy a detailed balance condition
[explain].

Points to make:
\begin{itemize}
\item Niwa's model does not admit an equilibrium that satisfies detailed balance.
\item No entropy-entropy dissipation identity is known. 
\item No result on convergence to equilibrium is known.
\item Niwa proposed equilibrium solutions follow
a simple power-law distribution form with exponential cut-off
which fits scaled fish-school data and simulations rather well.
\end{itemize}


\hrule
\medskip

Our present work focuses on two models that are closely related to Niwa's, but whose
structure allows us to develop a remarkably detailed analysis of the structure
of equilibria and the large-time behavior of solutions. In particular we consider
\begin{itemize}
\item Model D (Discrete):
\begin{eqnarray}
&&\hspace{-1cm}
a_{i,j} = 2q, \qquad b_{i,j} = \frac{2p}{i+j+1}, \label{eq:rates_niwa_discD} 
\end{eqnarray}
\item Model C (Continuous): 
\begin{eqnarray}
&&\hspace{-1cm}
a(x,y) = 2\bar q, \qquad b(x,y) = \frac{2\bar p}{x+y}, \label{eq:rates_niwa_cont}
\end{eqnarray}
\end{itemize}
Here $p, q, \bar p$, and $\bar q$ are positive constants. 
We can interpret Model D as follows: 
a fragmenting cluster of size $i$ breaks into pairs with sizes $(0,i),\ldots,(i,0)$ 
with equal probability $1/(i+1)$. 
In the extreme cases, of course, nothing actually happens. This effectively
slows the fragmentation rate for smaller groups. However, the analysis becomes remarkably simpler as compared with the rates in \qref{eq:rates_niwa_disc}, as we will see in the sequel.

[Note that in Niwa, $q$ is the group merging rate but Niwa assumes that the groups are scattered over $s$ geometric sites, so that the actual merging rate is $q/s$. We shall make $s=1$ for simplicity. The quantity $p$ is associated with the group splitting rate which is assumed independent of the group size. However, another model with binomial dependence on the group size is briefly mentioned in \cite{Niwa-JTB2003}. We will not consider this model here. ]

Inserting (\ref{eq:rates_niwa_discD}) into (\ref{eq:CF2_disc}) in the discrete case
and (\ref{eq:rates_niwa_cont}) into (\ref{eq:CF2}) in the continuous case, 
we get the equations that will be studied below: 
\begin{itemize}
\item Model D (discrete): In weak form we require that for any suitable test function $\varphi_i$,
\begin{eqnarray}
&&\hspace{-1cm}
\frac{d}{dt} \sum_{i=1}^\infty  \varphi_i \, f_i(t) =
q \sum_{i,j=1}^\infty \big( \varphi_{i+j} - \varphi_i - \varphi_j \big) \,   f_i(t) \, f_j(t) \nonumber \\
&&\hspace{4.cm}
+ p \sum_{i=1}^\infty \Big( -  \varphi_i + \frac{2}{i+1} \sum_{j=1}^{i} \varphi_j  \Big) f_i(t) \,  . 
\label{eq:CF2_disc_NiwaC}
\end{eqnarray}
In strong form, the system is written as follows: 
\begin{eqnarray}
&&\hspace{-1.5cm}
    \frac{\partial f_i}{\partial t}(t) = Q_a(f)_i(t) + Q_b(f)_i(t) ,
\label{eq:CF3_disc_NiwaC}\\
&&\hspace{-1.5cm}
Q_a(f)_i(t) = q \sum_{j=1}^{i-1} f_j(t)  \, f_{i-j}(t) - 2q \sum_{j=1}^\infty f_i(t) \, f_j(t) , 
\label{eq:CF4_disc_NiwaC} \\
&&\hspace{-1.5cm}
Q_b(f)_i(x,t) = -p\,  f_i(t) + 2p \sum_{j=i}^\infty \frac{1}{j+1} \, f_{j}(t) \nonumber\\
&&\hspace{-0.2cm}
\qquad = -p\left(\frac{i-1}{i+1}\right)  f_i(t) + 2p \sum_{j=i+1}^\infty \frac{1}{j+1} \, f_{j}(t). 
\label{eq:CF5_disc_NiwaC}
\end{eqnarray}
\item
Model C (continuous): The problem is written in weak form, for any test function $\varphi (x)$: 
\begin{eqnarray}
&&\hspace{-1cm}
\frac{d}{dt} \int_{{\mathbb R}_+} \varphi(x) \, F_t(dx) \, dx = 
\bar q \int_{({\mathbb R}_+)^2} \big( \varphi (x+y) - \varphi(x) - \varphi(y) \big) \, F_t(dx) \, F_t(dy)  \nonumber \\
&&\hspace{3.5cm}
+\bar p \int_{{\mathbb R}_+} \Big( \frac{2}{x} \int_0^x \varphi(y) \, dy -  \varphi (x)\Big) \,F_t(dx) . 
\label{eq:CF2_Niwa}
\end{eqnarray}
\end{itemize}

We note that this fragmentation is a critical case between so-called strong and weak fragmentation \cite{Ball_Carr_JSP90}. We also stress that these models do not assume detailed balance, i.e. there is no equilibrium distribution $\bar f$ such that
$$ b(x,y) \, \bar f(x+y) = a(x,y) \, \bar f (x) \, \bar f (y). $$
[Note: this requires nontrivial proof in the continuous case: see appendix?]
Most available existence, uniqueness and convergence to equilibrium results assume detailed balance and consequently, do not apply to the present case. The purpose of this paper is to propose a systematic mathematical theory for this particular example of coagulation-fragmentation equation. More precisely, in the discrete case, we show a global in time existence result of weak solutions with all moments bounded. Additionally, we have exponential convergence to equilibrium under a boundedness condition on the initial data. In the continuous case, we prove the existence of a steady state and give an explicit formula in the Laplace transformed variables. 

%
\subsubsection{Scale invariance and reduction for model C}
\label{subsec_scale_invar}

In this section, we consider the continuous case, described by Eq. (\ref{eq:CF2_Niwa}). 
Let $k \in {\N}$ and $f$: $x \in {\mathbb R}_+ \mapsto f(x) \in {\mathbb R}_+$. We denote by $m_k(f)$ the moments 
\begin{eqnarray}
&&\hspace{-1cm}
m_k(f) = \int_{x \in {\mathbb R}_+} x^k\, f(x) \, dx.
\label{eq:Cont_mk}
\end{eqnarray}
Let $f_0$ be an initial condition such that $m_1(f_0) < \infty$ and $f(t)$ be the solution of (\ref{eq:CF2_Niwa}) with initial condition $f_0$, supposing that it exists. Then, by (\ref{eq:CF_mass}), we know that $m_1(f(t))$ is formally conserved, i.e. 
\begin{eqnarray}
&&\hspace{-1cm}
m_1(f(t)) = m_1(f_0) := m_1.
\label{eq:Cont_m1}
\end{eqnarray}

Our first observation is that, without loss of generality, we can assume $p=1$, $q=1$, $m_1 = 1$. More precisely, we have the following proposition: 

\begin{proposition}
Let $f_0$: $x \in {\mathbb R}_+ \mapsto f_0(x) \in {\mathbb R}_+$ be an initial condition for  (\ref{eq:CF2_Niwa}) such that $m_1(f_0) = m_1 <\infty$ and let $f_{p,q}(t)$ be the solution of (\ref{eq:CF2_Niwa}) with parameters $p$ and $q$ and with initial condition $f_0$, supposing that it exists. Then, we have
\begin{eqnarray}
&&\hspace{-1cm}
f_{p,q}(t) = \frac{p^2}{m_1 q^2} \,  f_{1,1} \Big(\frac{p}{m_1 q} \, x, \frac{p^3}{m_1^2 q^2} \, t \Big), 
\label{eq:Cont_scale_invar}
\end{eqnarray}
with $f_{1,1}(x,t)$ the solution of (\ref{eq:CF2_Niwa}) with parameters $p=1$ and $q=1$ associated with the initial condition $\tilde f_0$ such that 
\begin{eqnarray}
&&\hspace{-1cm}
f_0 = \frac{p^2}{m_1 q^2} \tilde f_0(\frac{p}{m_1 q} x). 
\label{eq:Cont_SI_0}
\end{eqnarray}
Additionally, we have 
\begin{eqnarray}
&&\hspace{-1cm}
m_1(f_{1,1}(t)) = m_1(\tilde f_0) := 1.
\label{eq:Cont_m1_tilde}
\end{eqnarray}
\label{prop:cont_scale_invar}
\end{proposition}

\medskip
\noindent
{\bf Proof.} Let $f$ be a solution of (\ref{eq:CF2_Niwa}) with parameters $p$ and $q$. Let $\alpha >0$, $\beta >0$. We introduce the change of unknowns:
\begin{eqnarray}
(x,t) = c \alpha^2 \tilde f (\tilde x, \tilde t), \qquad \tilde x = \alpha x, \qquad \tilde t = \beta t . 
\label{eq:cont_transform}
\end{eqnarray}
Introducing this change of variables into (\ref{eq:CF2_Niwa}), we get that $\tilde f (\tilde x, \tilde t)$ satisfies 
\begin{eqnarray}
&&\hspace{-1.5cm}
\beta \frac{\partial \tilde f}{\partial \tilde t}(\tilde x, \tilde t) = \tilde Q_c(\tilde f)(\tilde x,\tilde t) - \tilde Q_f(\tilde f)(\tilde x,\tilde t) ,
\label{eq:CF3_Niwa_tilde}\\
&&\hspace{-1.5cm}
\tilde Q_c(\tilde f)(\tilde x,\tilde t) = c q \alpha^3 \left( \int_0^{\tilde x} \, \tilde f(\tilde y,\tilde t) \, \tilde f(\tilde x-\tilde y,\tilde t) \, d\tilde y - 2  \, \tilde f(\tilde x,\tilde t) \, \int_0^\infty \tilde f(\tilde y,\tilde t) \, d\tilde y \right) , 
\label{eq:CF4_Niwa_tilde} \\
&&\hspace{-1.5cm}
\tilde Q_f(\tilde f)(\tilde x,\tilde t) = p \alpha^2 \left( \tilde f(\tilde x,\tilde t)  - 2 \int_{\tilde x}^\infty \frac{\tilde f(\tilde y,\tilde t)}{\tilde y} \, d\tilde y \right). 
\label{eq:CF5_Niwa_tilde}
\end{eqnarray}
By taking 
\begin{eqnarray}
&&\hspace{-1.5cm} 
\beta = c q \alpha^3 = p \alpha^2,
\label{eq:cond_alpha_beta}
\end{eqnarray}
we get that $\tilde f$ is a solution of (\ref{eq:CF2_Niwa}) with parameters $p=1$ and $q=1$. Additionally, transformation (\ref{eq:cont_transform}) satisfies:
$$ m_1 = m_1(f(t))  =  c m_1(\tilde f ( \tilde t)). $$
Therefore, by choosing 
\begin{eqnarray}
&&\hspace{-1.5cm} 
c = m_1, 
\label{eq:cond_m1}
\end{eqnarray}
By collecting (\ref{eq:cond_alpha_beta}) and (\ref{eq:cond_m1}), we see that $\alpha$, $\beta$ and $c$ must satisfy
$$c=m_1, \qquad \alpha = \frac{p}{m_1 \, q}, \qquad \beta = \frac{p^3}{m_1^2 \, q^2}. $$
Inserting this choice into (\ref{eq:cont_transform}) leads to the conclusion of the proposition. \endproof

Thanks to this proposition, in the remainder of this section, we restrict to the case $p=1$, $q=1$, $m_1 = 1$. Therefore, we are now concerned with problem 
\begin{eqnarray}
&&\hspace{-1.5cm}
\frac{\partial f}{\partial t}(x,t) = Q_c(f)(x,t) - Q_f(f)(x,t) ,
\label{eq:CF3_Niwa_11}\\
&&\hspace{-1.5cm}
Q_c(f)(x,t) = \int_0^x \, f(y,t) \, f(x-y,t) \, dy - 2  \, f(x,t) \, \int_0^\infty f(y,t) \, dy , 
\label{eq:CF4_Niwa_11} \\
&&\hspace{-1.5cm}
Q_f(f)(x,t) = f(x,t)  - 2 \int_x^\infty \frac{f(y,t)}{y} \, dy . 
\label{eq:CF5_Niwa_11}\\
&&\hspace{-1.5cm}
m_1(f(t)) = \int_0^\infty x \, f(x,t) \, dx = 1, \quad \forall t \in [0,\infty). 
\label{eq:norm_11}
\end{eqnarray}
In weak form, this problem has expression:
\begin{eqnarray}
&&\hspace{-1cm}
\frac{d}{dt} \int_{{\mathbb R}_+} \varphi(x) \, f(x,t) \, dx = 
\int_{({\mathbb R}_+)^2} \big( \varphi (x+y) - \varphi(x) - \varphi(y) \big) \, f(x,t) \, f(y,t)  \, dx \, dy \nonumber \\
&&\hspace{4.5cm}
+ \int_{{\mathbb R}_+} f(x,t) \, \, \Big( \frac{2}{x} \int_0^x \varphi(y) \, dy -  \varphi (x)\Big) \, dx . 
\label{eq:CF2_11}
\end{eqnarray}

\subsection{Bernstein transform for Model C}
\label{subsec_continuous_laplace}

We now introduce the following transformation, which is a slight variant of Laplace transform. For $f$: $x \in {\mathbb R}_+ \mapsto f(x) \in {\mathbb R}_+$ with $m_0(f)$ finite, we define $\breve f$: $s \in {\mathbb R}_+ \mapsto \breve f(s) \in {\mathbb R}_+$ by: 
\begin{eqnarray}
&&\hspace{-1.5cm} 
\breve f(s) = \int_{x \in {\mathbb R}_+} (1 - e^{-sx}) \, f(x) \, dx. 
\label{eq:pego_laplace}
\end{eqnarray}
The function $\breve f(s)$ is an infinitely differentiable increasing function of $s \in {\mathbb R}_+$ onto $[0,m_0(f))$. The following proposition provides the form taken by problem (\ref{eq:CF2_Niwa}) in the $s$ representation. 
[This could be called a {\em Bernstein transform} of $f$]

\begin{proposition}
Let $f_0$: $x \in {\mathbb R}_+ \mapsto f_0(x) \in {\mathbb R}_+$ be an initial condition for  (\ref{eq:CF3_Niwa_11}) such that $m_1(f_0) = 1$ and let $f(t)$ be the solution of (\ref{eq:CF3_Niwa_11})-(\ref{eq:CF5_Niwa_11}), supposing that it exists. Let $\breve f_0(s)$ and $\breve f (s,t)$ the Laplace transforms of $f_0$ and $f(t)$ in the sense of (\ref{eq:pego_laplace}). Then, $\breve f (s,t)$ satisfies:  
\begin{eqnarray}
&&\hspace{-1.5cm}
\frac{\partial \breve f}{\partial t}(s,t) = \breve Q_c(\breve f)(s,t) - \breve Q_f(\breve f)(s,t) ,
\label{eq:CF3_Niwa_Lap}\\
&&\hspace{-1.5cm}
\breve Q_c(f)(x,t) = - \breve f^2 (s,t) , 
\label{eq:CF4_Niwa_Lap} \\
&&\hspace{-1.5cm}
Q_f(f)(x,t) =  \breve f(s,t)  - \frac{2}{s} \int_0^s \breve f(\sigma,t) \, d\sigma . 
\label{eq:CF5_Niwa_Lap}\\
&&\hspace{-1.5cm}
\partial_s \breve f (0,t) = 1, \quad \forall t \in [0,\infty). 
\label{eq:norm_Lap}
\end{eqnarray}
\label{prop_laplace}
\end{proposition}

\medskip
\noindent
{\bf Proof.} We introduce the test function $a_s(x) = 1 - e^{-sx}$. A simple calculation shows that 
$$ a_s(x+y) - a_s(x) - a_s(y) = - a_s(x) \, a_s(y). $$
and that 
\begin{eqnarray*}
\frac{1}{x} \int_0^x a_s(y) \, dy &=& \frac{1}{s} \int_0^s a_\sigma(x) \, d\sigma = 1 - \frac{1}{xs} a_s(x) , 
\end{eqnarray*}
Therefore, inserting $a_s$ as a test function $\varphi$ into (\ref{eq:CF2_11}), we are directly led to (\ref{eq:CF3_Niwa_Lap})-(\ref{eq:CF5_Niwa_Lap}). Finally, we note that for any function $g(x)$ with finite $m_1(g)$, we have 
$ \partial_s \breve g (0) = m_1(g) $ by differentiating (\ref{eq:pego_laplace}) with respect to $s$ and taking the value at $s=0$, which leads to (\ref{eq:norm_Lap}). This ends the proof. \endproof

\subsection{Equilibria for Model C}

We now look at the equilibria of (\ref{eq:CF3_Niwa_11}). More precisely, we introduce the following definition.

\begin{definition}
The equilibria of (\ref{eq:CF3_Niwa_11}) are the functions $f_\infty$: $x \in {\mathbb R}_+ \mapsto f_\infty(x) \in {\mathbb R}_+$ such that 
\begin{eqnarray}
&&\hspace{-1.5cm}
Q_c(f_\infty)(x) =  Q_f(f_\infty)(x) , \quad \forall x \in [0,\infty), 
\label{eq:cont_Niwa_equi_1} \\
&&\hspace{-1.5cm}
m_1(f_\infty) = 1. 
\label{eq:cont_Niwa_equi_2} 
\end{eqnarray}
\label{def_cont_Niwa_equi}
\end{definition}

We have the following proposition: 

\begin{proposition}
There exists a unique equilibrium in the sense of Definition \ref{def_cont_Niwa_equi}, which has the following explicit form in the Laplace transformed variables: 
\begin{eqnarray}
\breve f_\infty (s) =  \frac{1}{\sqrt{s}} \left( \left( \frac{\sqrt{s + s_0} + \sqrt{s}}{2} \right)^{1/3} - \left( \frac{\sqrt{s + s_0} - \sqrt{s}}{2}\right)^{1/3} \right)^3 , 
\label{eq:cont_Niwa_equi_3} 
\end{eqnarray}
with 
$$ s_0 = \frac{4}{27}. $$
\label{prop_cont_Niwa_equi}
\end{proposition}

\begin{remark}
A Taylor expansion of (\ref{eq:cont_Niwa_equi_3}) near $s=0$ gives $\breve f_\infty (s) = s + o(s)$ which shows that condition (\ref{eq:norm_Lap}) is satisfied. 
\label{rem:equi_deriv_0}
\end{remark}

\medskip
\noindent
{\bf Proof.} The transform $\breve f_\infty(s)$ satisfies: 
\begin{eqnarray}
&&\hspace{-1.5cm}
 \breve Q_c(\breve f_\infty)(s) = \breve Q_f(\breve f_\infty)(s) , \quad \forall s \in [0,\infty), 
\label{eq:cont_Niwa_equi_4} 
\end{eqnarray}
i.e. 
\begin{eqnarray}
&&\hspace{-1.5cm}
\frac{2}{s} \int_0^s \breve f_\infty(\sigma) \, d\sigma - \breve f_\infty(s) - \breve f_\infty^2 (s) = 0. 
\label{eq:cont_Niwa_equi_5} 
\end{eqnarray}
This implies that $0 \leq \breve f_\infty(s) \leq 1$ for all $s \in {\mathbb R}_+$. Introducing $g(s)$ by:
$$ g(s) = \breve f_\infty(s) + \breve f_\infty^2(s), \qquad \breve f_\infty(s) = \frac{\sqrt{1+4\, g(s)} - 1}{2}, $$
we have:
$$
s \, g(s) = \int_0^s (\sqrt{1+4\, g(\sigma)} - 1) \, d \sigma, 
$$
i.e. by differentiating with respect to $s$:
\begin{equation} 
g'(s) = \frac{1}{s} \big( \sqrt{1+4\, g(s)} - 1 - g(s) \big), 
\label{eq:equi_cont}
\end{equation}
or 
$$ \frac{g'(s)}{\sqrt{1+4\, g(s)} - 1 - g(s)} = \frac{1}{s}, \qquad g(0) = 0,  $$
where the prime denotes derivative with respect to $s$. Returning to $\breve f_\infty$ and noting that 
$$ g'(s) = \breve f_\infty'(s) (1 + 2 \breve f_\infty(s)), \qquad \sqrt{1+4\, g(s)} = 1 + 2 \breve f_\infty(s), $$
we are led to:
\begin{equation}
 \frac{1+2 \, \breve f_\infty(s)}{\breve f_\infty(s) \, (1-\breve f_\infty(s))} \, \breve f_\infty'(s) = \frac{1}{s}, \qquad \breve f_\infty (0) = 0.  
\label{eq:finfode}
\end{equation}
Using that 
$$ \frac{1+2 \, \breve f_\infty(s)}{\breve f_\infty(s) \, (1-\breve f_\infty(s))} =  \frac{3}{1-\breve f_\infty(s)} + \frac{1}{\breve f_\infty(s)}, $$
we get:
$$ \frac{\breve f_\infty(s)}{(1-\breve f_\infty(s))^3} = C s\,,   $$
for a constant $C$ such that $0 \leq C \leq 1$. Then, thanks to condition (\ref{eq:norm_Lap}) (which must also be true for $\breve f_\infty$) and noting that 
$$
\frac{d}{ds} \left( \frac{\breve f_\infty(s)}{(1-\breve f_\infty(s))^3} \right)(0) = \breve f_\infty'(0) = 1, 
$$
we get that $C=1$ and finally
\begin{equation}
\boxed{
\frac{\breve f_\infty(s)}{(1-\breve f_\infty(s))^3} = s\,. 
}\label{eq:equi_cont_2}
\end{equation}

Now, introducing the change of function:
\begin{eqnarray}
&&\hspace{-1.5cm}
\breve f_\infty(s) = \frac{1}{\sqrt{27 \,s}} v^3(\sqrt{27 \, s}), 
\label{eq:cont_Niwa_equi_8} 
\end{eqnarray}
we find that $v = v(\xi)$, with $\xi = \sqrt{27 \, s}$ satisfies 
\begin{eqnarray}
&&\hspace{-1.5cm}
v^3(\xi) + 3 \, v(\xi) = \xi.
\label{eq:cont_Niwa_equi_6} 
\end{eqnarray}
Introducing the new change of function 
\begin{eqnarray}
&&\hspace{-1.5cm}
v(\xi) = z (\xi) - \frac{1}{z(\xi)},
\label{eq:cont_Niwa_equi_7} 
\end{eqnarray}
we get
$$ v^3(\xi) + 3 \, v(\xi) = z^3 (\xi) - \frac{1}{z^3(\xi)}. $$
Therefore, (\ref{eq:cont_Niwa_equi_6}) is equivalent to
$$ z^3 (\xi) - \frac{1}{z^3(\xi)} = \xi $$
This is a quadratic equation in $z^3(\xi)$, which is readily solved and leads to 
$$ z(\xi) = \left( \frac{\sqrt{\xi^2 + 4} + \xi}{2} \right)^{1/3}. $$
Then, inserting this expression into (\ref{eq:cont_Niwa_equi_7}), we get 
$$ v(\xi) = \left( \frac{\sqrt{\xi^2 + 4} + \xi}{2} \right)^{1/3} - \left( \frac{\sqrt{\xi^2 + 4} - \xi}{2} \right)^{1/3}. $$
Now, inserting again this expression into (\ref{eq:cont_Niwa_equi_8}) and factoring out $\sqrt{27}$, we get (\ref{eq:cont_Niwa_equi_3}), which ends the proof. \endproof

From (\ref{eq:cont_Niwa_equi_3}), we deduce the behavior of $f_\infty$ near $x=0$. Namely, we have the following proposition: 

\begin{proposition}
We have: 
\begin{eqnarray}
&&\hspace{-1.5cm}
f_\infty (x) \sim x^{-2/3}, \qquad \mbox{ when } \qquad x \to 0. 
\label{eq:cont_Niwa_equi_10} 
\end{eqnarray}
\label{prop:cont_equiv_behav_x=0}
\end{proposition}

\medskip
\noindent 
By Taylor expansion, we get when $s \to \infty$: 
\begin{eqnarray}
\breve f_\infty (s) =  1 - \frac{1}{\sqrt{3} \, s^{1/6}} + o \Big( \frac{1}{s^{1/6}} \Big). 
\label{eq:cont_Niwa_equi_11} 
\end{eqnarray}
TBC \endproof

\subsubsection{Complete monotonicity for model C}
\label{subsec_continuous_CM}

We recall the concept of complete monotone function \cite{Schilling_etal_Bernstein}:

\begin{definition}
A function $f$: $x \in (0,\infty) \to {\mathbb R}$ is said to be completely monotone if it is $C^\infty$ and such that
\begin{equation}
(-1)^k f^{(k)} \geq 0, \qquad \forall k \in {\N}. 
\label{eq:cont_CM_1}
\end{equation}
\label{def:cont_CM}
\end{definition}

\subsection{Proof of Theorem \ref{thm:cont_CM}.} 
The starting point is (\ref{eq:equi_cont_2}), which we write
\begin{eqnarray}
\frac{U(s)}{(1-U(s))^3} = s. 
\label{eq:cont_CM_10} 
\end{eqnarray}
We make the following change of variables $V = U + \frac{1}{2}$, $z = s + \frac{4}{27}$. Then (\ref{eq:cont_CM_10}) transforms into 
\begin{eqnarray}
G(V(z)) = z, \qquad G(V) = \frac{16 \, V^2 \, (9 - 2V)}{27 \, (3-2V)^3}. 
\label{eq:cont_CM_11} 
\end{eqnarray}
We notice that 
\begin{eqnarray}
G(0) = G'(0) = 0 \, < \,  G''(0) = 2 \, \big( \frac{4}{9} \big)^2, 
\label{eq:cont_CM_12} 
\end{eqnarray}
and that $G$ is monotone increasing from $[0,3/2)$ onto $[0,+\infty)$. Denote by $G^{-1}(z)$ the inverse function of $G$, which is monotone increasing from $[0,+\infty)$ onto $[0,3/2)$. Eq. (\ref{eq:cont_CM_10}) is equivalent to saying that $V(z) = G^{-1}(z)$.

Thanks to (\ref{eq:cont_CM_12}), we can write 
$$ G(V) = \frac{G''(0)}{2} V^2 \, (1+h(V))^2 , \qquad \mbox{as} \quad V \to 0 , $$
where $h(V)$ is analytic in the neighborhood of $V=0$ and is such that $h(0) = 0$. Now, introducing $\zeta = \sqrt{z}$ where the complex square root is taken with branch cut at $(- \infty,0)$, Eq. (\ref{eq:cont_CM_11}) is written:
$$ \sqrt{\frac{G''(0)}{2}} \tilde{V} \, (1+h(\tilde{V})) = \zeta, $$
where $\tilde{V}(\zeta) = V(z)$. This equation shows that $\tilde V$ is an analytic function of $\zeta$ in the neighborhood of $\zeta = 0$ such that  
$$ \tilde{V} (\zeta) = \sqrt{\frac{2}{G''(0)}} \zeta + \mbox{H.O.T.}.$$
In particular, we deduce that 
$$ \tilde{V}' (\zeta) = \sqrt{\frac{2}{G''(0)}} + \mbox{H.O.T.}.$$
Going back to $V(z)$ and using (\ref{eq:cont_CM_12}), we get
\begin{equation}
 V(z) \sim  \frac{9}{4}\, z^{1/2}, \qquad V'(z) \sim  \frac{9}{8}\, z^{-1/2}, \qquad \mbox{as} \quad z \to 0. 
\label{eq:cont_CMV1}
\end{equation}
Next, as $s \to \infty$, (\ref{eq:cont_CM_10}) shows that $1-u \sim s^{-1/3}$, which leads to 
\begin{equation}
 \frac{3}{2} - V(z) \sim z^{-1/3}, \qquad \mbox{as} \quad z \to \infty. 
\label{eq:cont_CMV2}
\end{equation}
 
[Step: Analytic continuation to cut complex plane.]
%
From  the arguments so far we obtain $V$ as an analytic function of $z$
in a neighborhood of $(0,\infty)$ in 
${\mathbb C} \setminus (- \infty ,0]$.
We claim this function extends analytically to all of 
${\mathbb C} \setminus (- \infty ,0]$.
Indeed, going back to \qref{eq:finfode} we can solve an ODE 
to determine the solution globally along rays that start from 
a given point on the positive real axis. By (\ref{eq:equi_cont_2}),
the solution remains non-real off the real axis, so cannot blow up
as the nonlinearity is Lipschitz [explain better..].
Moreover, it is true by continuation from a neighborhood of the real axis 
that $(\mbox{Im} z)\mbox{Im} V(z) >0$ for all nonreal $z$, since $V'(z)>0$
for all $z>0$.

[Existence of density.]
Now, since $V(0) = 0$ is real, the assumptions of Theorem \ref{thm:cont_CB} are satisfied, and therefore, there exists a completely monotone density $\gamma(x)$ such that (\ref{eq:cont_CM_5}) holds. Moreover, since $V(0) = 0$ and $V(z)$ is bounded (since $\lim_{z \to \infty} V(z)$ is finite), then $a=b=0$ and we have 
$$
V(z) = \int_0^\infty (1-e^{-zx}) \, \gamma(x) \, dx. 
$$
This leads to 
\begin{equation} 
U(s) = \int_0^\infty \Big( 1-e^{-\big( s+\frac{4}{27} \big) x} \Big) \, \gamma(x) \, dx - \frac{1}{2}. 
\label{eq:cont_CM_13}
\end{equation}
However, we note that, thanks to (\ref{eq:cont_CM_10}), 
$$ \int_0^\infty (1- e^{-\frac{4}{27} x} ) \, \gamma(x) \, dx = V(\frac{4}{27}) = U(0) + \frac{1}{2} = \frac{1}{2}. $$
Therefore, we can recast (\ref{eq:cont_CM_13}) into:
\begin{eqnarray} 
U(s) &=& \int_0^\infty \Big( 1-e^{-\big( s+\frac{4}{27} \big) x} \Big) \, \gamma(x) \, dx - \int_0^\infty (1- e^{-\frac{4}{27} x} ) \, \gamma(x) \, dx \nonumber\\
&=&  \int_0^\infty \big( 1-e^{-s x} \big) \, e^{-\frac{4}{27} x} \, \gamma(x) \, dx.
\label{eq:cont_CM_14}
\end{eqnarray}

[Tauberian argument.]
Now we establish the assertions 
(\ref{eq:cont_CM_3})-(\ref{eq:cont_CM_4})
regarding the asymptotic behavior of $\gamma(x)$.
Due to (\ref{eq:cont_CM_4}) and the fact $V(\infty)=\frac32$ we have
\[
\int_0^\infty e^{-zx}\gamma(x)\,dx \sim z^{-1/3}, 
\qquad\mbox{as $z\to\infty$}.
\]
By Karamata's Tauberian theorem [Feller v2, Thm XIII.5.1] it follows
\[
\frac1{y^{1/3}}\int_0^y \gamma(x)\,dx \to \frac{1}{\Gamma(4/3)}
\qquad\mbox{as $y\to0$}.
\]
By L'H\^opital's rule it follows
\[
\gamma(x) \sim \frac{ x^{-2/3}}{3\Gamma(4/3)}
\qquad\mbox{as $x\to0$}.
\]
Finally, from (\ref{eq:cont_CM_3}) we have
\[
V'(z)= \int_0^\infty e^{-zx}x\gamma(x)\,dx \sim \frac98 z^{-1/2}, 
\qquad\mbox{as $z\to0$}.
\]
By the Tauberian theorem it follows
\[
\frac1{y^{1/2}}\int_0^y x\gamma(x)\,dxy \to \frac{9}{8\Gamma(3/2)}
\qquad\mbox{as $x\to\infty$}.
\]
By L'H\^opital's rule we may then deduce that
\[ x\gamma(x) \sim \frac{9x^{-1/2}}{16 \Gamma(3/2)}
\qquad\mbox{as $x\to\infty$}.
\]
This finishes the proof of Theorem~\ref{thm:cont_CM}.

\bigskip
\hrule
\bigskip

[Uniqueness.]
We denote by $g(x) = e^{-\frac{4}{27} x} \, \gamma(x)$. Now, we show that $f_\infty=g$. For this purpose, we need to show that $g$ is the unique solution of (\ref{eq:cont_Niwa_equi_1}) which satisfies (\ref{eq:cont_Niwa_equi_2}). Additionally, we show that it also satisfies (\ref{eq:cont_CM_4_1}). First, we notice that $U(\infty) = 1$ by letting $s \to \infty$ in (\ref{eq:cont_CM_10}). Then, letting $s \to \infty$ in (\ref{eq:cont_CM_14}) and applying Lebesgue's dominated convergence theorem, we get that 
$$ \int_0^\infty g(x) \, dx = 1, $$
which shows that $g \in L^1(0,\infty)$ and additionally that 
\begin{equation}
m_0(g) = 1.
\label{eq:cont_CM_14_1}
\end{equation}
 Additionally, by differentiating (\ref{eq:cont_CM_10}) at $s=0$, we get $U'(1) = 1$. But, by differentiating (\ref{eq:cont_CM_14}) with respect to $s$ at $s=0$, we get $m_1(g) = 1$.

Now, (\ref{eq:cont_CM_10}) shows that, for all test functions of the form $\varphi_s(x) = 1 - e^{-sx}$ for $s \in [0,\infty)$, we have 
\begin{eqnarray}
&&\hspace{-1cm}
\int_0^\infty \varphi_s(x) Q_c(g)(x) \, dx = \int_0^\infty \varphi_s(x) Q_f(g)(x) \, dx . 
\label{eq:cont_CM_15}
\end{eqnarray}
To prove this, we first prove that $Q_c(g)$ and $Q_f(g)$ are both in $L^1(0,\infty)$. From (\ref{eq:CF4_Niwa_11}) and (\ref{eq:cont_CM_14_1}), and by exchanging orders of integrations, we have 
\begin{eqnarray*}
&&\hspace{-1.5cm}
\int_0^\infty |Q_c(g)(x)| \, dx \leq 3 m_0(g)^2 = 3. 
\end{eqnarray*}
Now, we have by simply exchanging the orders of integrations: 
\begin{eqnarray*}
\int_0^\infty |Q_f(g)(x)| \, dx &\leq& m_0(g)  + 2 \int_0^\infty \Big( \int_x^\infty \frac{g(y)}{y} \, dy \Big) \, dx \\
& \leq &  3m_0(g) = 3 . 
\end{eqnarray*}
Now, passing to the weak form, (\ref{eq:cont_CM_15}) is equivalent to 
\begin{eqnarray*}
&&\hspace{-1cm}
\int_{({\mathbb R}_+)^2} \big( \varphi_s (x+y) - \varphi_s(x) - \varphi_s(y) \big) \, g(x,t) \, g(y,t)  \, dx \, dy \nonumber \\
&&\hspace{4.5cm}
= - \int_{{\mathbb R}_+} g(x,t) \, \, \Big( \frac{2}{x} \int_0^x \varphi_s(y) \, dy -  \varphi_s (x)\Big) \, dx , 
\end{eqnarray*}
or 
\begin{eqnarray}
&&\hspace{-1cm}
\boxed{U^2(s) = \frac{2}{s}  \int_0^s U(\sigma) \, d \sigma -  U(s) \,,} 
\label{eq:Uint1}
\end{eqnarray}
or again
\begin{eqnarray*}
&&\hspace{-1cm}
s(U^2(s) + U(s)) = 2  \int_0^s U(\sigma) \, d \sigma , 
\end{eqnarray*}
By differentiating this equation, we get
$$ (U^2(s) + U(s)) + s(U^2 + U)'(s) = 2U(s), $$
or equivalently
$$ \frac{(2 U(s) + 1) U'(s)}{U(s)(1-U(s))} = \frac{1}{s}, $$
or again
$$ \frac{U'}{U}(s) + \frac{3U'}{(1-U)}(s) = \frac{1}{s}. $$
Integration of this differential equation gives:
\begin{eqnarray}
    \boxed{\frac{U(s)}{(1-U(s))^3} = C s\,, }
\label{eq:Usolution2}
\end{eqnarray}
for a constant $C$. But, we know from above that $U'(0) = 1$. Therefore $C=1$ and we recover (\ref{eq:cont_CM_14}). By proceeding backwards, this shows that (\ref{eq:cont_CM_14}) implies (\ref{eq:cont_CM_15}). 

Finally, from (\ref{eq:cont_CM_15}), we conclude that $g$ is a solution of (\ref{eq:cont_Niwa_equi_1}). Inded, $Q_c(g) - Q-f(g) \in L^1(0,\infty)$ and satisfies 
\begin{eqnarray*}
&&\hspace{-1cm}
\int_0^\infty (1-e^{-sx}) \, ( Q_c(g)(x) - Q_f(g)(x))  \, dx = 0 . 
\end{eqnarray*}
We also notice, by direct integration, that 
\begin{eqnarray*}
&&\hspace{-1cm}
\int_0^\infty ( Q_c(g)(x) - Q_f(g)(x))  \, dx = 0 . 
\end{eqnarray*}
Subtracting these two equations, we get
\begin{eqnarray*}
&&\hspace{-1cm}
\int_0^\infty e^{-sx} \, ( Q_c(g)(x) - Q_f(g)(x))  \, dx = 0 . 
\end{eqnarray*}
This is the ordinary Laplace transform of $Q_c(g) - Q_f(g)$. But for $L^1$ functions, if the ordinary Laplace transform is identically zero, the function itself is identically zero. This implies (\ref{eq:cont_Niwa_equi_1}).


} 



\section*{Acknowledgements}
This material is based upon work supported by the National
Science Foundation under grants 
DMS 1211161, DMS 1515400, and DMS 1514826,
and partially supported by the Center for Nonlinear Analysis (CNA)
under National Science Foundation PIRE Grant no.\ OISE-0967140,
and the NSF Research Network Grant no.\ RNMS11-07444 (KI-Net).
PD acknowledges support from  EPSRC under grant ref: EP/M006883/1, from the
Royal Society and the Wolfson Foundation through a Royal Society Wolfson
Research Merit Award.
PD is on leave from CNRS, Institut de Math\'ematiques de Toulouse, France. 
JGL and RLP acknowledge support from the 
Institut de Math\'ematiques, Universit\'e Paul Sabatier, Toulouse and the
Department of Mathematics, Imperial College
London, under Nelder Fellowship awards. 



\bibliographystyle{plain}
\bibliography{Niwabib}

\begin{thebibliography}{10}

\bibitem{AB1979}
M.~Aizenman and T.~A. Bak.
\newblock Convergence to equilibrium in a system of reacting polymers.
\newblock {\em Comm. Math. Phys.}, 65(3):203--230, 1979.

\bibitem{BallCarrPenrose1986}
J.~M. Ball, J.~Carr, and O.~Penrose.
\newblock The {B}ecker-{D}\"oring cluster equations: basic properties and
  asymptotic behaviour of solutions.
\newblock {\em Comm. Math. Phys.}, 104(4):657--692, 1986.

\bibitem{C-JSP2007}
J.~A. Ca{\~n}izo.
\newblock Convergence to equilibrium for the discrete coagulation-fragmentation
  equations with detailed balance.
\newblock {\em J. Stat. Phys.}, 129(1):1--26, 2007.

\bibitem{Carr1992}
J.~Carr.
\newblock Asymptotic behaviour of solutions to the coagulation-fragmentation
  equations. {I}. {T}he strong fragmentation case.
\newblock {\em Proc. Roy. Soc. Edinburgh Sect. A}, 121(3-4):231--244, 1992.

\bibitem{CDC1994}
J.~Carr and F.~P. da~Costa.
\newblock Asymptotic behavior of solutions to the coagulation-fragmentation
  equations. {II}. {W}eak fragmentation.
\newblock {\em J. Statist. Phys.}, 77(1-2):89--123, 1994.

\bibitem{Feller}
W.~Feller.
\newblock {\em An {I}ntroduction to {P}robability {T}heory and its
  {A}pplications. {V}ol. {II}.}
\newblock Second edition. John Wiley \& Sons Inc., New York, 1971.

\bibitem{FlajoletS}
P.~Flajolet and R.~Sedgewick.
\newblock {\em Analytic {C}ombinatorics}.
\newblock Cambridge University Press, Cambridge, 2009.

\bibitem{FM2004}
N.~Fournier and S.~Mischler.
\newblock Exponential trend to equilibrium for discrete coagulation equations
  with strong fragmentation and without a balance condition.
\newblock {\em Proc. R. Soc. Lond. Ser. A Math. Phys. Eng. Sci.},
  460(2049):2477--2486, 2004.

\bibitem{Graham}
R.~L. Graham, D.~E. Knuth, and O.~Patashnik.
\newblock {\em Concrete {M}athematics}.
\newblock Addison-Wesley Publishing Company, Reading, MA, second edition, 1994.

\bibitem{GL1995}
S.~Gueron and S.~A. Levin.
\newblock The dynamics of group formation.
\newblock {\em Math. Biosci.}, {128}({1-2}):{243--264}, {1995}.

\bibitem{Henrici}
P.~Henrici.
\newblock An algebraic proof of the {L}agrange-{B}\"urmann formula.
\newblock {\em J. Math. Anal. Appl.}, 8:218--224, 1964.

\bibitem{ILPxx}
G.~Iyer, N.~Leger, and R.~L. Pego.
\newblock Coagulation and universal scaling limits for critical
  {G}alton-{W}atson processes.
\newblock In preparation.

\bibitem{ILP2015}
G.~Iyer, N.~Leger, and R.~L. Pego.
\newblock Limit theorems for {S}moluchowski dynamics associated with critical
  continuous-state branching processes.
\newblock {\em Ann. Appl. Probab.}, 25(2):675--713, 2015.

\bibitem{Klenke}
A.~Klenke.
\newblock {\em Probability theory}.
\newblock Universitext. Springer, London, second edition, 2014.

\bibitem{LM2003}
P.~Lauren{\c{c}}ot and S.~Mischler.
\newblock Convergence to equilibrium for the continuous
  coagulation-fragmentation equation.
\newblock {\em Bull. Sci. Math.}, 127(3):179--190, 2003.

\bibitem{LR2010}
P.~Lauren{\c{c}}ot and H.~van Roessel.
\newblock Nonuniversal self-similarity in a coagulation-annihilation model with
  constant kernels.
\newblock {\em J. Phys. A}, 43(45):455210, 10, 2010.

\bibitem{LR2015}
P.~Lauren{\c{c}}ot and H.~van Roessel.
\newblock Absence of gelation and self-similar behavior for a
  coagulation-fragmentation equation.
\newblock {\em SIAM J. Math. Anal.}, 47(3):2355--2374, 2015.

\bibitem{LP2016}
J.-G. Liu and R.~L. Pego.
\newblock On generating functions of {H}ausdorff moment sequences.
\newblock {\em Trans. Amer. Math. Soc.}
\newblock to appear. arxiv:1401.8052.

\bibitem{Ma_etal_JTB11}
Q.~Ma, A.~Johansson, and D.~J.~T. Sumpter.
\newblock {A first principles derivation of animal group size distributions}.
\newblock {\em {J. Theo. Biol.}}, {283}({1}):{35--43}, {2011}.

\bibitem{MP2004}
G.~Menon and R.~L. Pego.
\newblock Approach to self-similarity in {S}moluchowski's coagulation
  equations.
\newblock {\em Comm. Pure Appl. Math.}, 57(9):1197--1232, 2004.

\bibitem{MP2008}
G.~Menon and R.~L. Pego.
\newblock The scaling attractor and ultimate dynamics for {S}moluchowski's
  coagulation equations.
\newblock {\em J. Nonlinear Sci.}, 18(2):143--190, 2008.

\bibitem{Mlotkowski}
W.~M{\l}otkowski.
\newblock Fuss-{C}atalan numbers in noncommutative probability.
\newblock {\em Doc. Math.}, 15:939--955, 2010.

\bibitem{Niwa-CMA1996}
H.~S. Niwa.
\newblock {Mathematical model for the size distribution of fish schools}.
\newblock {\em Comp. Math. Appl.}, {32}({11}):{79--88}, {1996}.

\bibitem{Niwa-JTB1998}
H.~S. Niwa.
\newblock {School size statistics of fish}.
\newblock {\em {J. Theo. Biol.}}, {195}({3}):{351--361}, {1998}.

\bibitem{Niwa-JTB2003}
H.~S. Niwa.
\newblock {Power-law versus exponential distributions of animal group sizes}.
\newblock {\em {J. Theo. Biol.}}, {224}({4}):{451--457}, {2003}.

\bibitem{Niwa-JTB2004}
H.~S. Niwa.
\newblock {Space-irrelevant scaling law for fish school sizes}.
\newblock {\em {J. Theo. Biol.}}, {228}({3}):{347--357}, {2004}.

\bibitem{PolyaSzego}
G.~P{\'o}lya and G.~Szeg{\H{o}}.
\newblock {\em Problems and {T}heorems in {A}nalysis. {I}}.
\newblock Classics in Mathematics. Springer-Verlag, Berlin, 1998.

\bibitem{Schilling_etal_Bernstein}
R.~L. Schilling, R.~Song, and Z.~Vondra{\v{c}}ek.
\newblock {\em Bernstein {F}unctions}, volume~37 of {\em de Gruyter Studies in
  Mathematics}.
\newblock Walter de Gruyter \& Co., Berlin, 2010.

\bibitem{WhittakerWatson}
E.~T. Whittaker and G.~N. Watson.
\newblock {\em A {C}ourse of {M}odern {A}nalysis}.
\newblock Cambridge Mathematical Library. Cambridge University Press,
  Cambridge, 1996.
\newblock Reprint of the fourth (1927) edition.

\end{thebibliography}


\end{document}